\documentclass{article}
\usepackage[preprint]{icml2026}
\icmlpreprintfalse
\usepackage[T1]{fontenc}
\usepackage{amsmath,amssymb,amsthm,mathtools,booktabs,graphicx,microtype}
\usepackage{enumitem}
\usepackage[hidelinks]{hyperref}
\usepackage{xurl}

\usepackage{caption}
\setcitestyle{round}

\newtheorem{theorem}{Theorem}[section]
\newtheorem{lemma}[theorem]{Lemma}
\newtheorem{proposition}[theorem]{Proposition}
\newtheorem{corollary}[theorem]{Corollary}
\theoremstyle{definition}

\theoremstyle{remark}
\newtheorem{remark}[theorem]{Remark}
\allowdisplaybreaks[2]
\hypersetup{pdfauthor={},pdftitle={Information Limits of Low-Rank Approximation Certification}}

\title{Information Limits of Low-Rank Approximation Certification}

\author{
  \mbox{Kang Liu\textsuperscript{1}}
  \qquad
  \mbox{Bohao Qu\textsuperscript{2}}
}

\begin{document}

\maketitle

\footnotetext[1]{School of Future Technology, Xi'an Jiaotong University, Xi'an, China. Corresponding author: Kang Liu (\texttt{kanyo@foxmail.com}). \(^2\)Centre for Frontier AI Research (CFAR), A*STAR, Singapore.}

\begin{abstract}
Low-rank approximation can require additional matrix--vector products to verify that its error meets a prescribed tolerance. We characterize this certification cost for both relative matrix error and mean-square output error. For a single approximation matrix candidate, we determine the exact dimension-uniform minimax query constant as the allowed failure probability vanishes. Our main result concerns reusing validation responses as the approximation space expands. For a candidate family constructed independently of validation, one batch supports an entire nested path without increasing the query budget with the number of checks. Across \(W\) paths, a concentration bound exploiting shared residual energy yields a \(\sqrt{\log(W+1)}\) dependence. A matching lower bound establishes its optimality for fixed interior error targets and sufficiently small separation gaps. Finally, we compare two uniformly valid certificates on the same dispersed-spectrum family. Optimizing the validation budget within each rule family yields costs of orders \(N^{1/3}\) and \(N^{2/3}\) for validation and construction beyond the true target. Code is available at \url{https://anonymous.4open.science/r/Low-rank-approximation-1275/}.
\end{abstract}
\section{Introduction}

Low-rank approximations reduce computation and storage by
representing high-dimensional linear operators through a small
set of basis directions and their coefficients
\citep{halko2011finding}, with applications in model compression
\citep{denton2014linear}, dimensionality reduction, and large
Jacobian approximation.
Since the appropriate rank is often unknown, algorithms typically
expand the approximation space until a prescribed accuracy is
reached \citep{yu2018efficient,liu2025adaptive}.

For large operators that can only accessed through matrix--vector products
(MVPs), such as neural-network Jacobians, determining whether
this accuracy has been reached can itself be costly.
An approximation may already meet the target while the algorithm
continues working because it lacks sufficient evidence to stop.
We study how to reliably decide whether a computed low-rank
approximation is accurate enough to stop.
The original operator \(A\) is accessed through forward MVPs,
and its current approximation is \(PA\), where \(P\) orthogonally
projects onto the approximation space.
Certification must reject approximations whose error is at or
above the target and accept those below it by a prescribed gap,
with both types of incorrect decision controlled by a given
failure probability.
We consider relative squared Frobenius error and relative
mean-square output error, assuming a known input second moment
for the latter.
The question is how many MVPs are sufficient for
these guarantees, and how many are unavoidable regardless
of the certification procedure.

For a given approximation, Gaussian probing provides a concrete
way to obtain such guarantees
\citep{halko2011finding,epperly2024error}.
The squared norms of the residual responses estimate the
Frobenius error, while transforming the probes using the known
input second moment gives the corresponding output-error
estimates.
We analyze how often these estimates lead to an incorrect
decision, obtaining a sufficient query budget.
To establish a lower bound, we construct hard families of
operators that require opposite decisions but produce
statistically similar observations.
This argument covers adaptive procedures that choose new queries
using previous responses.
In the worst case over matrices of arbitrary dimensions,
the upper and lower bounds have the same leading constant
as the allowed failure probability tends to zero.

As the rank increases, checking each new approximation with
fresh probes could repeatedly incur this validation cost.
We therefore study whether the previously computed responses
of \(A\) can support the subsequent checks.
For a family of approximations constructed independently of
validation, one batch suffices along an entire nested expansion
path, with no increase in query budget as more approximations
are checked.
For simultaneous certification across \(W\) such paths,
we obtain an optimal \(\sqrt{\log(W+1)}\) dependence on \(W\),
for fixed interior error targets and sufficiently small gaps.
The proof uses the ordering of residuals within each path
and a concentration bound exploiting their shared energy
constraint across paths.
A matching lower bound shows that adaptive forward queries
cannot improve this dependence in the worst case.

Finally, we study how the validation budget affects the work
performed before stopping.
Fewer probes reduce the immediate validation cost, but the
resulting uncertainty may cause further rank expansion after
the approximation has already met its target.
We account for this trade-off by adding validation queries
to the expected construction cost beyond the true target.
On a common dispersed-spectrum matrix family indexed by \(N\),
we compare an energy-ratio test and a block-sign certificate
along the same expansion path.
Both provide uniform reliability guarantees, but optimizing
the validation budget within each rule family yields excess
costs of orders \(N^{2/3}\) and \(N^{1/3}\), respectively.
This separation quantifies how the choice of certificate
affects the balance between collecting more evidence and
continuing construction.

Our main contributions are summarized as:
(1) an exact minimax high-confidence query constant;
(2) a shared-energy concentration bound and optimal
\(\sqrt{\log(W+1)}\) dependence for simultaneous certification;
and (3) a stopping-cost separation between two uniformly
valid certification rules.

\section{Related Work}
\label{sec:related_work}

\paragraph{Adaptive low-rank approximation.}
Randomized range finding constructs approximation spaces from operator
applications with probabilistic error guarantees
\citep{halko2011finding}.
Blocked factorizations and incremental QB schemes support adaptive
stopping and fixed-precision approximation
\citep{martinsson2016randomized,yu2018efficient}, while randomized
numerical-rank estimation helps select suitable approximation
dimensions \citep{meier2024fast}.
Recent work develops blockwise deflation with spectral and Frobenius
guarantees \citep{liu2025adaptive}, and matrix-free residual indicators
with rank pruning and adjoint-free basis construction
\citep{smith2026adaptive}.
Leave-one-out and jackknife diagnostics additionally assess errors
and variances using existing construction data
\citep{epperly2024error}.
We study the information required to certify a supplied approximation
and the additional construction incurred while waiting for a reliable
stopping decision.

\paragraph{Error estimation and query complexity.}
The identity $\|R\|_F^2=\operatorname{tr}(R^\top R)$ connects
residual-energy estimation to stochastic trace estimation.
Classical analyses give relative-error sample bounds with prescribed
failure probabilities \citep{avron2011randomized,roosta2015improved},
with further concentration results for indefinite matrices
\citep{cortinovis2022randomized}.
Hutch++ and its adaptive variants combine low-rank approximation
with residual sampling \citep{meyer2021hutchpp,persson2022improved};
XTrace exploits exchangeability and spectral structure
\citep{epperly2024xtrace}.
Query lower bounds address scalar quadratic-form access
\citep{wimmer2014optimal} and matrix--vector access
\citep{woodruff2022dynamic}.
The oracle distinction prevents direct transfer: our oracle returns
the full vector $Av$, revealing $v^\top A^\top Av=\|Av\|_2^2$,
but does not directly provide $A^\top Av$.
Our bounds concern relative-error under this oracle,
including an exact high-confidence minimax constant.

\paragraph{Shared information and simultaneous guarantees.}
Dynamic trace estimation exploits small matrix updates
\citep{dharangutte2021dynamic,woodruff2022dynamic}, while
parameter-dependent estimators reuse probes and operator products
through polynomial approximation \citep{matti2026parameter}.
More generally, bounds for suprema of matrix-indexed quadratic
processes capture dependencies that separate concentration bounds
ignore \citep{krahmer2014suprema,dirksen2015tail}.
Here the relevant structure is the common envelope
$0\preceq E_i\preceq T$, together with ordered residuals along each
path.
Proposition~\ref{prop:shared_envelope} controls the normalized energy
estimates jointly; boundary reduction and a matching lower bound
then give the optimal $\sqrt{\log(W+1)}$ width dependence in
Theorem~\ref{thm:simultaneous_certification}.
The complete candidate family is independent of validation,
although its traversal and final selection may be adaptive.

\section{Optimal Certification of a Single Approximation}
\label{sec:single_certification}

We begin with the query cost of certifying an approximation, which establishes the information requirement for decisions before
considering repeated checks and stopping along the expansion path.

\subsection{Problem formulation}
\label{sec:certification_problem}

Let \(A\in\mathbb R^{m\times n}\) be nonzero.
An orthonormal basis \(U\in\mathbb R^{m\times r}\) defines the known
orthogonal projector \(P=UU^\top\) and approximation \(PA\).
Its relative squared Frobenius error is
\(q(P)=\|(I-P)A\|_F^2/\|A\|_F^2\), the fraction of energy outside
the retained output space.
Given \(0<g<\rho<1\) and \(0<\delta<1/2\), certification requires
\[
\begin{aligned}
q(P)\ge\rho
&~\Longrightarrow~
\mathbb P\{\mathrm{accept}\}\le\delta,\\
q(P)\le\rho-g
&~\Longrightarrow~
\mathbb P\{\mathrm{reject}\}\le\delta.
\end{aligned}
\]
Either decision is allowed in the intervening region.
Thus, \(g\) specifies the accuracy margin at which acceptance
must become reliable.

Each forward query \(v\mapsto Av\) returns the full output vector.
Query directions may depend on previous responses and private
randomness, but every run uses at most \(s\) queries.
Let \(R_s(\rho,g)\) be the smallest achievable worst-case error
probability over the two decision regions, with the worst case
taken over all dimensions, nonzero matrices, and supplied projections.
Procedures may use the dimensions and projection.
The optimal query budget is
\(s^\star(\delta;\rho,g)=\min\{s:R_s(\rho,g)\le\delta\}\).

Upper bounds also apply conditionally on a construction stage
independent of the validation probes, counting only additional
validation queries. Lower bounds, by contrast, charge for all
operator observations available to the procedure, including any
supplied construction responses; no construction transcript is
provided free of charge.

\subsection{Gaussian certification and optimality}
\label{sec:gaussian_optimality}

Following Gaussian residual estimation \citep{halko2011finding},
draw independent \(g_i\sim\mathcal N(0,I_n)\) and cache
\(Y=AG\), where \(G=[g_1,\ldots,g_s]\).
These \(s\) queries give the estimate
\(\widehat q=\|(I-P)Y\|_F^2/\|Y\|_F^2\).
Define the log-odds map \(\ell(x)=\log[x/(1-x)]\) and the transformed
gap \(\Delta=\ell(\rho)-\ell(\rho-g)\).
The certificate accepts when
\(
\ell(\widehat q)
\le ({\ell(\rho)+\ell(\rho-g)})/{2},
\)
with the natural endpoint conventions.
This midpoint gives the same deviation margin to the two
decision regions.

\begin{theorem}[Information limits for a single approximation]
\label{thm:single_certification}
Fix \(0<g<\rho<1\) and
\(\Delta=\ell(\rho)-\ell(\rho-g)\).
Let \(p_s(u)=\mathbb P\{F_{s,s}\le e^{-u}\}\), where \(F_{s,s}\)
is the ratio of two independent \(\chi_s^2\) variables.
For every integer \(s\ge1\),
\(
p_s(\Delta/2)
\le R_s(\rho,g)
\le \exp[-s\log\cosh(\Delta/4)].
\)
Consequently,
\[
\lim_{\delta\downarrow0}
[{s^\star(\delta;\rho,g)}/{\log(1/\delta)}]
={1}/{\log\cosh(\Delta/4)}.
\]
\end{theorem}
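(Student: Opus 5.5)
The plan is to prove the upper bound by a Chernoff bound on the Gaussian certificate that reduces every instance to a rank-two extremal case. For the lower bound, that rank-two case, with randomly rotated inputs, becomes a two-point testing problem whose optimal error is exactly \(p_s(\Delta/2)\). The limit then follows because \(p_s(u)\) decays at the same exponential rate \(\log\cosh(u/2)\).

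\emph{Upper bound.} Write \(X=\|(I-P)Y\|_F^2\) and \(Z=\|PY\|_F^2\), so that \(\ell(\widehat q)=\log(X/Z)\). Let \(c=(\ell(\rho)+\ell(\rho-g))/2\). Normalize \(\|A\|_F=1\) and set \(M_1=A^\top(I-P)A\) and \(M_2=A^\top PA\), with \(\operatorname{tr}M_1=q\) and \(\operatorname{tr}M_2=1-q\).

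Consider first the case \(q\ge\rho\). A false accept means \(e^cZ-X\ge0\), and for \(t>0\) Markov's inequality gives
\[
\mathbb P\{\mathrm{accept}\}\le\bigl(\det[I-2t(e^cM_2-M_1)]\bigr)^{-s/2}.
\]
The eigenvalues of \(B=e^cM_2-M_1\) have positive parts summing to at most \(e^c(1-q)\) and negative parts summing to at most \(q\) in absolute value (Ky Fan). The function \(\lambda\mapsto-\tfrac12\log(1-2t\lambda)\) is convex with value \(0\) at \(0\), hence superadditive on each sign. The determinant bound is therefore worst when \(B\) has a single eigenvalue \(\alpha=e^c(1-q)\) and a single eigenvalue \(-\beta=-q\).

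Minimizing \([(1-2t\alpha)(1+2t\beta)]^{-1/2}\) over \(t\) gives \(2\sqrt{\alpha\beta}/(\alpha+\beta)=1/\cosh(u/2)\), where \(e^{-u}=\alpha/\beta\), i.e.\ \(u=\ell(q)-c\ge\Delta/2\). This yields the per-query factor \(1/\cosh(\Delta/4)\). The case \(q\le\rho-g\) is symmetric, with \(X-e^cZ\). Endpoint cases (\(Z=0\) or \(X=0\)) follow from the conventions. Together these give \(R_s\le\exp[-s\log\cosh(\Delta/4)]\).

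\emph{Lower bound.} I would take \(A=\sqrt q\,u_\perp x^\top+\sqrt{1-q}\,u_P y^\top\), where \(u_P\in\operatorname{range}P\) and \(u_\perp\perp\operatorname{range}P\) are unit vectors. The input directions \(x,y\) are independent, uniform on the sphere of \(\mathbb R^n\), and scaled by a common random norm chosen so that \(\sqrt n\,x\) and \(\sqrt n\,y\) are exactly standard Gaussian. The two hypotheses are \(q=\rho\) and \(q=\rho-g\).

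An adaptive procedure's \(s\) queries reveal only \(\langle x,v_i\rangle\) and \(\langle y,v_i\rangle\). By rotation invariance we may take the queries to be orthonormal, and then the responses are \(s\) i.i.d.\ pairs \((\sqrt q\,\xi_i,\sqrt{1-q}\,\eta_i)\) with \(\xi_i,\eta_i\) independent Gaussians. I would put an additional scale prior on \(\|A\|\) (or use the scale invariance of the problem) to reduce the data to the maximal invariant \(X/Z\sim\frac{q}{1-q}F_{s,s}\). Its likelihood ratio is monotone, so the optimal test thresholds \(\log(X/Z)\), and the minimax error over the two points is attained at the symmetric threshold. That error equals \(p_s(\Delta/2)\), since \(\log F_{s,s}\) is symmetric and the two hypotheses are log-shifts of it by \(\Delta\).

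The main obstacle is making the adaptive-query reduction rigorous. Conditioned on the past, the Gaussian directions restricted to the orthogonal complement of earlier queries stay Gaussian, so the transcript law is that of i.i.d.\ pairs. The scale reduction also needs a prior-limit argument to justify using the invariant statistic.

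\emph{Limit.} From the upper bound, \(s^\star\le\lceil\log(1/\delta)/\log\cosh(\Delta/4)\rceil\). For the matching lower bound, Cramér's theorem applied to \(\log F_{s,s}=\sum_i(\log\chi^2_{1,i}-\log\chi'^2_{1,i})\) would be messy. It is simpler to compute the rate directly: \(p_s(u)=\mathbb P\{\sum_i(e^{-u}\eta_i^2-\xi_i^2)\ge0\}\). The Chernoff exponent of this sum is exactly \(\log\cosh(u/2)\), by the same optimization as above, and the Cramér lower bound gives \(\log p_s(u)=-s\log\cosh(u/2)+o(s)\). Hence \(p_s(\Delta/2)\le\delta\) forces \(s\ge(1-o(1))\log(1/\delta)/\log\cosh(\Delta/4)\), which proves the stated limit.
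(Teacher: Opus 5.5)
\emph{Upper bound and limit.} Your upper bound follows the paper's argument: a Chernoff bound on the pooled quadratic form, then a determinant bound reduced to the rank-two pair. Replacing the paper's exact density $c_s e^{-sI(z)}$ of $\log F_{s,s}$ by Cramér's theorem is a legitimate route to the limit. You do need to state that $s^\star\to\infty$, which holds because $p_s>0$ for each fixed $s$.

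One repair is needed in the upper bound. The Ky Fan bounds $p\le e^c(1-q)$ and $r\le q$ on the positive and negative eigenvalue masses do not by themselves make the rank-two pair extremal. Under those constraints alone the worst case is $r=0$, which gives a bound larger than one. You also need the trace identity $p-r=e^c(1-q)-q$; the paper combines exactly this with $pr\le ab$.

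\emph{The gap: the lower bound at fixed $n$.} The two properties you need from your construction cannot hold together at a fixed $n$.
\begin{itemize}
\item Suppose $\sqrt n\,x$ and $\sqrt n\,y$ are independent standard Gaussians, so that the responses are i.i.d.\ Gaussian pairs and $X/Z\sim\frac{q}{1-q}F_{s,s}$ exactly. Then the sampled matrix does not have error $q$: its error odds are $\frac{q}{1-q}F_{n,n}$. Your hypotheses sit exactly at $q=\rho$ and $q=\rho-g$, and the median of $F_{n,n}$ is one. So half of each prior's mass lies where no decision is required, and the averaging argument only gives $\max\{\alpha,\beta\}\ge 2p_s(\Delta/2)-1<0$.
\item Suppose instead a common radius forces $\|x\|=\|y\|$. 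Then the error is exactly $q$, but the two response blocks are no longer independent. In fact $X/Z=\frac{q}{1-q}\,b_1/b_2$ with $b_1,b_2$ i.i.d.\ $\mathrm{Beta}(s/2,(n-s)/2)$, not $F_{s,s}$. At $s=n$ this ratio is the constant $q/(1-q)$, and the error is zero.
\end{itemize}
No positive lower bound can hold at a fixed $n$ once $s\ge n$, since $n$ coordinate queries recover $A$ exactly. The missing step is a limit $n\to\infty$ with explicit control of the discrepancy between the hypothesis parameter and the realized error.

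\emph{How the paper closes it.} The paper places the hypotheses strictly inside the regions, at log-odds $\ell(\rho)+\eta$ and $\ell(\rho-g)-\eta$. It computes the leakage exactly as $p_n(\eta)$ and obtains $\max\{\alpha,\beta\}\ge[(p_s(\Delta/2+\eta)-p_n(\eta))/(1-p_n(\eta))]_+$. It then lets $n\to\infty$, and afterwards $\eta\downarrow0$ using continuity of $p_s$. Your common-radius variant could also be completed by letting $n\to\infty$, since $n b_i\Rightarrow\chi^2_s$.

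The paper also uses determinant-one covariances $\operatorname{diag}(e^{\lambda_h/2},e^{-\lambda_h/2})$. The log-likelihood ratio is then proportional to $e^{-\mu/2}S_1-e^{\mu/2}S_2$, so the Bayes test on the full data is already the ratio test. This removes your scale-prior and invariance step. That step is genuinely needed in your version: with trace-normalized covariances $\operatorname{diag}(q,1-q)$ the determinants differ, and the Bayes test on $(X,Z)$ exploits scale and beats the ratio test.
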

\vspace{-1pt}
Since \(-s^{-1}\log p_s(u)\to\log\cosh(u/2)\), the two bounds share
the same exponential rate.
The upper bound follows by analyzing residual and captured energy
jointly.
Write \(T=A^\top A\), \(E=A^\top(I-P)A\), \(C=T-E\), and
\(Z(M)=s^{-1}\sum_i g_i^\top Mg_i\).
For \(q=q(P)\in(0,1)\),
\[
L:=\ell(\widehat q)-\ell(q)
=\log({Z(E)}/{Z(C)})
-\log({\operatorname{tr}E}/{\operatorname{tr}C}).
\]
The common denominator cancels, leaving a comparison between
two energy estimates computed from the same probes.
The cases \(q=0,1\) are exact almost surely.

To control the lower tail, normalize
\(\bar E=E/\operatorname{tr}E\) and
\(\bar C=C/\operatorname{tr}C\).
For \(u>0\), the event \(L\le-u\) is equivalent to \(Z(H)\le0\),
where \(H=\bar E-e^{-u}\bar C\).
If \(p\) and \(r\) denote the total positive and absolute negative
eigenvalue masses of \(H\), then
\(p\le1\), \(r\le e^{-u}\), and \(p-r=1-e^{-u}\).
Grouping the positive and negative eigenvalue factors gives,
for \(0<t<e^u/2\),
\[
\det(I_n+2tH)
\ge (1+2tp)(1-2tr)
\ge (1+2t)(1-2te^{-u}).
\]
The second inequality uses \(pr\le e^{-u}\).
For a standard Gaussian vector \(z\), the identity
\(\mathbb E e^{-tz^\top Hz}=\det(I_n+2tH)^{-1/2}\)
therefore bounds the exponential moment without assuming
independence between the two energy estimates.
Exponential Markov with \(t=(e^u-1)/4\), followed by interchanging
\(E\) and \(C\), yields
\[
\mathbb P\{L\le-u\},~
\mathbb P\{L\ge u\}
\le \exp[-s\log\cosh(u/2)].
\]
Each incorrect decision requires a deviation of at least
\(\Delta/2\), proving the upper bound.
Appendix~\ref{app:single:joint_moment} gives the complete
moment and tail arguments.

For the lower bound, consider two Gaussian matrix priors with
\(P=\operatorname{diag}(0,1)\).
Under hypothesis \(h\in\{0,1\}\), the columns of \(A\) are independent
\(\mathcal N(0,\Sigma_h/n)\) vectors.
The diagonal, determinant-one covariances are chosen so that the
error log-odds concentrate at \(\ell(\rho)+\eta\) and
\(\ell(\rho-g)-\eta\), respectively, for an auxiliary \(\eta>0\).
Therefore, it is necessary to distinguish these priors for reliable
certification as \(n\) grows.

To account for adaptive queries, decompose each new direction as
\(v=v_\parallel+v_\perp\) relative to the span of previous queries.
The parallel response is already known, whereas input isotropy
gives the conditional representation
\begin{equation}
Av=Av_\parallel
+\frac{\|v_\perp\|_2}{\sqrt n}\,\xi,
~~
\xi\sim\mathcal N(0,\Sigma_h).
\label{eq:gaussian_innovation}
\end{equation}
The innovation is independent of the past.
Consequently, at most \(s\) independent covariance observations
simulate the complete adaptive transcript through a rule
independent of \(h\).
Lemma~\ref{app:single:innovation} proves this representation
and the transcript reduction.

For the chosen covariances, the optimal equal-prior test on these
observations has error \(p_s(\Delta/2+\eta)\).
The probability that a prior draw lies outside its intended
decision region vanishes as \(n\to\infty\).
Taking this limit and then \(\eta\downarrow0\) gives the lower bound;
Proposition~\ref{app:single:finite_lower} supplies the explicit
test and finite-dimensional correction.
Finally,
\(-s^{-1}\log p_s(u)\to\log\cosh(u/2)\)
by Lemma~\ref{app:single:f_asymptotics}, so the risk bounds have
the same exponential rate and yield the stated query constant.

For fixed interior \(\rho\), taking the high-confidence limit
before \(g\downarrow0\) gives the leading cost
\(32\rho^2(1-\rho)^2g^{-2}\log(1/\delta)\).
Appendix~\ref{app:single:minimax} derives this expansion.
Its \(g^{-2}\) dependence provides the precision scale for
the multiple-approximation problem studied next.

The same framework also certifies mean-square output error
by incorporating the input second moment into the probes.

\begin{corollary}[Mean-square output certification]
\label{cor:output_certification}
Suppose \(K=\mathbb E[xx^\top]\succeq0\) is known and
\(\operatorname{tr}(AKA^\top)>0\).
With \(B=AK^{1/2}\),
\[
q_K(P):=
\frac{\mathbb E\|Ax-PAx\|_2^2}
     {\mathbb E\|Ax\|_2^2}
=
\frac{\|(I-P)B\|_F^2}{\|B\|_F^2}.
\]
For every given \(K\), probes \(K^{1/2}g_i\) give the same
Gaussian upper bounds.
When the worst case also ranges over all admissible known
second moments, the minimax risk equals \(R_s(\rho,g)\),
with the same exact high-confidence query constant.
\end{corollary}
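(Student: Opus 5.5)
The plan has three parts: reduce the output-error problem to Frobenius certification of \(B=AK^{1/2}\), lift the Gaussian upper bound through that reduction, and obtain the lower bound by embedding the hard instances of Theorem~\ref{thm:single_certification} with \(K=I\).

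\textbf{The identity.} Expand the output error as a trace:
\[
\mathbb E\|(I-P)Ax\|_2^2=\operatorname{tr}\bigl((I-P)AKA^\top(I-P)\bigr)=\|(I-P)AK^{1/2}\|_F^2 .
\]
This uses \(\mathbb E[xx^\top]=K\) and \((I-P)^2=I-P\). The case \(P=0\) gives \(\|B\|_F^2=\operatorname{tr}(AKA^\top)>0\). Hence \(q_K(P)=q(P)\) evaluated at the matrix \(B\), which is nonzero.

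\textbf{The upper bound.} Query \(A\) at \(K^{1/2}g_i\). Each response is \(Bg_i\) exactly, so \(Y=BG\) and the statistic \(\widehat q\) is the Gaussian estimator for \(B\). Applying the upper-bound half of Theorem~\ref{thm:single_certification} to the nonzero matrix \(B\) gives risk at most \(\exp[-s\log\cosh(\Delta/4)]\), uniformly in \(K\). The same argument works conditionally on an independent construction stage. This establishes the first claim, and it also gives \(R^K_s\le\) the Gaussian bound, where \(R^K_s\) is the minimax risk with the worst case ranging over \(A\), \(P\) and \(K\).

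\textbf{Equality of minimax risks.} Two inequalities are needed. For \(R^K_s\ge R_s\), restrict the worst case to \(K=I\). Then \(q_K=q\) and the oracle is unchanged, so every output-certification procedure is a Frobenius procedure on that subfamily, and the lower bound \(p_s(\Delta/2)\) transfers; the Gaussian priors of the lower-bound construction already use \(K=I\). For \(R^K_s\le R_s\), I would argue by simulation. Given a procedure for the Frobenius problem, certify \(B\) by forwarding each requested query \(v\) as the query \(K^{1/2}v\) to \(A\), which returns \(Bv\). This is a legitimate adaptive procedure for the output problem with the same query count, since the procedure knows \(K\). Its decision error at \((A,P,K)\) equals the Frobenius procedure's error at \((B,P)\), so the worst cases agree.

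\textbf{Main obstacle.} Dimensions and projections are supplied in both problems, but in the Frobenius problem the matrix \(B\) ranges over all nonzero matrices. Every such \(B\) arises with \(A=B\), \(K=I\), so the correspondence is onto and the infima coincide, giving \(R^K_s=R_s\). The query constant then follows verbatim from Theorem~\ref{thm:single_certification}. The only delicate point is that the simulation needs the procedure's query to depend on \(K\), which is allowed because \(K\) is known.
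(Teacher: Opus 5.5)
Your proposal is correct and follows the paper's proof essentially step for step. The paper also uses the trace identity \(\mathbb E\|Mx\|_2^2=\|MK^{1/2}\|_F^2\) for \(M=(I-P)A\) and \(M=A\), the query simulation \(Bv=A(K^{1/2}v)\) to get both the Gaussian upper bound and \(R^K_s\le R_s\), and restriction to \(K=I_n\) to get \(R^K_s\ge R_s\); the identity holds for any \(M\), so the idempotence of \(I-P\) you invoke is not actually needed.
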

The input distribution need not be Gaussian.
The reduction and the minimax statement are proved in
Appendix~\ref{app:single:functional}.

\section{Reusing Validation Across Approximation Paths}
\label{sec:validation_reuse}

Once the responses \(Y=AG\) have been computed, another approximation
\(PA\) can be checked by projecting the same \(Y\), without further
queries to \(A\).
The question is whether these repeated decisions remain jointly
reliable.
Throughout this section, the complete approximation family is finite
or countable and constructed independently of the validation probes
\(G\), with \(\rho,g\) fixed in advance.
Validation outcomes may guide visits, switches, and final selection
within this family, but cannot be used to construct new members.
Our guarantees concern the simultaneous correctness of all required
decisions, which also protects a selection made using those outcomes.

\subsection{Repeated checks along a nested path}
\label{sec:nested_validation}

Suppose each expansion retains the existing approximation space:
\(\operatorname{range}(P_j)\subseteq\operatorname{range}(P_{j+1})\).
The residual energy matrices
\(E_j=A^\top(I-P_j)A\) then satisfy
\(E_{j+1}\preceq E_j\preceq T\).
Consequently, both the true errors \(q_j\) and their estimates
\(\widehat q_j=Z(E_j)/Z(T)\) decrease along the path.

This ordering reduces repeated certification to two boundary decisions.
On a finite path, approximations with \(q_j\ge\rho\) form a prefix.
Under a common acceptance threshold, accepting any member of this
prefix implies accepting its last member.
Similarly, approximations with \(q_j\le\rho-g\) form a suffix,
and rejecting any member implies rejecting its first member.
Thus, every required decision is correct if these two boundary
decisions are correct.

The boundaries depend on the true errors and are used only for
analysis; the procedure need not identify them.
Applying Theorem~\ref{thm:single_certification} to at most two
boundaries gives a query budget independent of the number of checks.
Lemma~\ref{app:reuse:boundary-lemma} formalizes this reduction.

\subsection{Optimal cost of comparing multiple paths}
\label{sec:multiple_paths}

An approximation family has \emph{path width} at most \(W\) if its
residual energy matrices can be covered by \(W\) chains in the
positive semidefinite order.
This includes \(W\) nested expansion paths, regardless of their
lengths.
Let \(s^\star_{\mathrm{sim}}(\delta;\rho,g,W)\) be the least query
budget ensuring that all required decisions are correct with
probability at least \(1-\delta\), uniformly over dimensions,
operators, and known families of width at most \(W\).
We use the query and information-accounting model of
Section~\ref{sec:certification_problem}.
The chain cover describes the instance class; the certification
procedure does not need to compute it.

\begin{theorem}[Information limits for simultaneous certification]
\label{thm:simultaneous_certification}
Fix \(\rho\in(0,1)\), and set
\(w=\min\{\rho,1-\rho\}\).
For integers \(W\ge1\), \(0<g\le w/32\), and
\(0<\delta\le1/4\), under the independent-family model above,
\[
s^\star_{\mathrm{sim}}(\delta;\rho,g,W)
\asymp_\rho
\frac{\sqrt{\log(W+1)}+\log(1/\delta)}{g^2}.
\]
The comparison constants depend only on \(\rho\).
Independent Gaussian probes and the rule
\(\widehat q_\lambda\le\rho-g/2\) attain the upper bound.
The lower bound allows arbitrary adaptive forward queries
with a hard budget.
\end{theorem}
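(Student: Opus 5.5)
The proof has two parts: an upper bound from a joint concentration argument over the $2W$ boundary members, and a matching lower bound from a many-versus-one Gaussian testing problem.

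\textbf{Upper bound.} I would reduce the family to boundary decisions, apply a shared-envelope concentration bound to those boundaries, and then take a union bound only over the $\log(1/\delta)$ part.

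First, using the chain cover and Lemma~\ref{app:reuse:boundary-lemma} on each chain, it suffices that on every chain the last member with $q_j\ge\rho$ is rejected and the first member with $q_j\le\rho-g$ is accepted. Here $\widehat q_\lambda$ is the estimate $Z(E)/Z(T)$ computed from the shared probes. This leaves at most $2W$ residual matrices $E_i$ with $0\preceq E_i\preceq T$.

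A wrong decision at such an $E_i$ requires $|Z(E_i)/Z(T)-q_i|\ge g/2$. Since $Z(T)$ concentrates around $\operatorname{tr}T$ at relative accuracy $g$ with cost $g^{-2}\log(1/\delta)$, it is enough to control
\[
\max_i |Z(E_i)-\operatorname{tr}E_i|/\operatorname{tr}T .
\]
Normalize by $\operatorname{tr}T$. Each $\bar E_i=E_i/\operatorname{tr}T$ satisfies $\operatorname{tr}\bar E_i\le1$ and $\|\bar E_i\|_F\le\|\bar E_i\|_{op}^{1/2}$. Hanson--Wright gives a tail of the form
\[
2\exp\!\bigl[-c\min\{s t^2/\|\bar E_i\|_F^2,\; s t/\|\bar E_i\|_{op}\}\bigr].
\]
A naive union bound over $2W$ terms costs $\log W/g^2$, not $\sqrt{\log W}/g^2$.

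The saving must come from the shared envelope, which is the content of Proposition~\ref{prop:shared_envelope}. I would try the following. The deviation is $s^{-1}\sum_k\langle \bar E_i, g_kg_k^\top-I\rangle$. Split $\bar T$ into its top eigendirections, those with eigenvalue above a level $\tau$ (at most $1/\tau$ of them), and the remainder. On the low-eigenvalue part, Frobenius norms are at most $\sqrt\tau$ uniformly in $i$, so the Gaussian-regime union bound costs $\tau\log W/(s g^2)$. On the top block, of dimension $d\le1/\tau$, the deviation of $s^{-1}\sum g_kg_k^\top$ restricted there is bounded in trace-dual (operator) norm by $\sqrt{d/s}$ for all $i$ at once, since every $\|\bar E_i\|_{*}\le1$; the contribution is $\lesssim g$ once $s\gtrsim d/g^2=1/(\tau g^2)$. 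Balancing $\tau\log W$ against $1/\tau$ with $\tau=(\log W)^{-1/2}$ gives $s\asymp\sqrt{\log(W+1)}/g^2$. The $\log(1/\delta)$ term is added by a standard tail for this supremum (Gaussian concentration of the supremum, or a Bernstein-type bound). The constants depend on $\rho$ only through $w$.

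\textbf{Lower bound.} The $\log(1/\delta)/g^2$ term follows from Theorem~\ref{thm:single_certification} with $W=1$, since $1/\log\cosh(\Delta/4)\asymp_\rho g^{-2}$. For the $\sqrt{\log W}$ term, I would build a many-versus-one Gaussian testing problem.

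The columns of $A$ are i.i.d.\ $\mathcal N(0,\Sigma/n)$ with $\Sigma=I_d+\epsilon\,\theta\theta^\top$-type perturbations, where $d\asymp\sqrt{\log W}$ blocks are used. Then $W$ coordinate projections $P_1,\dots,P_W$ (one-element chains) are chosen so that under the null every $q_i=\rho-g$, requiring all accepts. Under alternative $j$, a covariance bump raises $q_j$ to $\rho$ while keeping the total energy fixed; the slack $g\le w/32$ keeps everything interior.

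The innovation representation in Lemma~\ref{app:single:innovation} reduces any adaptive procedure to $s$ i.i.d.\ covariance samples. Detecting which of $W$ sparse spectral perturbations is present from $s$ samples then costs, by a $\chi^2$ or second-moment argument over a uniform mixture of alternatives, $s\gtrsim\sqrt{\log W}/g^2$. To obtain $\sqrt{\log W}$ rather than $\log W$, the alternatives should be dense perturbations indexed by sign patterns in dimension $d\asymp\sqrt{\log W}$, and the $W$ candidate projections should be rank-$d/2$ subspaces aligned with those patterns. The mixture $\chi^2$ divergence is then $\mathbb E\exp(s\epsilon^2\langle\theta,\theta'\rangle^2)$-like, which stays bounded while $s\epsilon^2\lesssim d$ with $\epsilon\asymp g/\sqrt d\cdot\sqrt d$, yielding the claimed rate.

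\textbf{Main obstacle.} The hardest step is matching this hard construction to the shared-envelope upper bound: a packing of $W$ subspaces with the right overlaps, plus the requirement that the finite-$n$ prior concentration place every draw in the correct decision regions. I would tune this construction first, choosing the packing by Gilbert--Varshamov.
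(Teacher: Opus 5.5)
\textbf{Lower bound.} You cannot get the width lower bound from a many-versus-one (null versus mixture) argument, because detection is easy in your construction. The null has an isotropic $d$-block, and alternative $j$ perturbs it by $\tfrac wd\,aQ_j$ with $Q_j=2P_j-I_d$ and $a\asymp g/w$. After whitening, every alternative therefore lies at Frobenius distance $a\sqrt d$ from the null. An unbiased estimate of $\|\Sigma-I\|_F^2$ from the $s$ whitened samples has null fluctuations of order $d/s$. It therefore separates the null from every alternative once $s\gg1/a^2\asymp w^2/g^2$, whatever $d$ and $W$ are. Consequently the mixture $\chi^2$ must diverge at that budget, and this route gives at most order $1/g^2$. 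Its off-diagonal terms are $\det(I-a^2Q_jQ_k)^{-s/2}\approx\exp\{\tfrac{sa^2}{2}\operatorname{tr}(Q_jQ_k)\}$, not the rank-one expression you quote.

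The $\sqrt{\log W}$ factor is the price of \emph{identification}, not detection. The paper uses $\Sigma_i=\operatorname{diag}(\tfrac wd(I_d-aQ_i),\beta,\zeta)$ with $a=16g/w$ and $R_j=\operatorname{diag}(P_j,1,0)$, together with a packing satisfying $\operatorname{tr}(P_iP_j)\le3d/8$. This gives $\operatorname{tr}(R_i\Sigma_i)=\rho-2g$ and $\operatorname{tr}(R_j\Sigma_i)\ge\rho+2g$ for $j\ne i$, so simultaneous correctness forces the accepted set to be exactly $\{i\}$. Fano, with per-sample $\mathrm{KL}\le a^2d/(1-a^2)$ and $\log M\asymp d^2$, then gives $s\gtrsim d/a^2\asymp_\rho\sqrt{\log W}/g^2$. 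Your one-bad-candidate alternatives could serve the same purpose, but only through Fano over the alternatives, after the overlap bound places every other candidate strictly below $\rho-g$.

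Two further points on the lower bound. First, $\log M\asymp d^2$ requires a packing in a $\Theta(d^2)$-dimensional family, such as the paper's graph projectors of Haar rotations in $\mathrm{SO}(d/2)$; coordinate projections or sign vectors in $\{\pm1\}^d$ give only $\log M\lesssim d$. Second, Theorem~\ref{thm:single_certification} identifies the confidence constant only as $\delta\downarrow0$ at fixed $g$. A bound uniform over $\delta\le1/4$ and $g\le w/32$ needs a non-asymptotic two-point argument, such as the paper's $32sg^2/w^2\ge\operatorname{kl}(1-\delta,\delta)\ge\tfrac14\log(1/\delta)$.

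\textbf{Upper bound.} Your sketch omits a term. Since $E_i$ need not commute with $T$, $Z(\bar E_i)-\operatorname{tr}\bar E_i$ is not just a head-block deviation plus a tail-block deviation; there is also a cross term $\tfrac2s\operatorname{tr}(G_H^\top\bar E_{i,HL}G_L)$. Its Frobenius size is harmless: $\|\bar E_{i,HL}\|_F^2\le\tau/4$, from $K_{HL}K_{LH}\preceq K_{HH}-K_{HH}^2$ with $E_i=T^{1/2}K_iT^{1/2}$. However, Hanson--Wright plus a union bound also charges $\|\bar E_{i,HL}\|_{\mathrm{op}}\log W/s$. That operator norm can be of order $\sqrt\tau$, for example when a head eigenvalue of order one is coupled to tail eigenvalues of order $\tau$. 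With $\tau=(\log W)^{-1/2}$ this forces $s\gtrsim(\log W)^{3/4}/g$, which exceeds $\sqrt{\log W}/g^2$ once $\log W\gg g^{-4}$. Retuning $\tau$ does not repair this uniformly over $T$, and since the theorem is uniform in $W$, the claimed rate fails.

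The missing step is to condition on the head rows $G_H$. Work on a single event $\|G_H\|_{\mathrm{op}}\lesssim\sqrt s$, whose cost is set by the head dimension and $\log(1/\delta)$ rather than by $\log W$. On it, each cross term is conditionally a centered Gaussian with variance at most $4\|\bar E_{i,HL}\|_F^2\|G_H\|_{\mathrm{op}}^2/s^2\lesssim\tau/s$. The union bound over candidates is then purely sub-Gaussian at the tail-block scale $\sqrt{\tau\log W/s}$.
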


The arithmetic midpoint used here targets the additive separation
gap.
For fixed \(W,\rho,g\), the log-odds midpoint from the preceding
section also preserves the exact high-confidence constant
\(1/\log\cosh(\Delta/4)\);
Proposition~\ref{app:reuse:fixed-width-proposition} gives this
consequence of the boundary reduction.
To obtain the smaller dependence on a growing \(W\), we use the
additional fact that all residual energies satisfy
\(0\preceq E_\lambda\preceq T\).

\begin{proposition}[Concentration under a shared energy envelope]
\label{prop:shared_envelope}
Let \(T\succeq0\) satisfy \(\operatorname{tr}T>0\), and let
\(0\preceq E_i\preceq T\), \(1\le i\le M\), be fixed independently
of the Gaussian probes.
Set
\(q_i=\operatorname{tr}(E_i)/\operatorname{tr}(T)\) and
\(\widehat q_i=Z(E_i)/Z(T)\).
For every integer \(s\ge1\) and \(0<\alpha<1\),
\begin{equation}
\mathbb P\!\left\{
\max_{i\le M}|\widehat q_i-q_i|
>
16\sqrt{
\frac{\sqrt{\log(2M)}+\log(5/\alpha)}{s}
}
\right\}
\le\alpha.
\label{eq:shared_envelope_concentration}
\end{equation}
\end{proposition}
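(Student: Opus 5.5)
The plan is to reduce the ratio to a centred numerator process, and then bound that process by splitting the spectrum of the envelope at a level tied to \(\sqrt{\log M}\).

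\emph{Reduction to the numerator.} By scale invariance, replace \(T\) and \(E_i\) by \(\bar T=T/\operatorname{tr}T\) and \(\bar E_i=E_i/\operatorname{tr}T\). Then \(\operatorname{tr}\bar T=1\), \(q_i=\operatorname{tr}\bar E_i\) and \(0\preceq\bar E_i\preceq\bar T\). Put \(X_i=Z(\bar E_i)-q_i\) and \(X_0=Z(\bar T)-1\). Since \(\widehat q_i\le1\),
\[
|\widehat q_i-q_i|
=\frac{|X_i-\widehat q_i X_0|}{1+X_0}
\le\frac{|X_i|+|X_0|}{1+X_0}.
\]
So it suffices to show two things with probability \(1-\alpha\):
\[
|X_0|\le1/2
\quad\text{and}\quad
\max_i|X_i|\le c\sqrt{(\sqrt{\log(2M)}+\log(5/\alpha))/s}.
\]
If \(s<\sqrt{\log(2M)}+\log(5/\alpha)\), the threshold in \eqref{eq:shared_envelope_concentration} exceeds \(16\ge1\ge|\widehat q_i-q_i|\), so the claim is trivial. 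We may therefore assume \(s\ge\sqrt{\log(2M)}+\log(5/\alpha)\). The bound on \(X_0\) is then a standard Laurent--Massart/Hanson--Wright tail for a trace-one PSD quadratic form, which needs only \(s\gtrsim\log(1/\alpha)\).

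\emph{Spectral split of the envelope.} Let \(\theta=1/\sqrt{\log(2M)}\), and let \(Q\) project onto the eigenvectors of \(\bar T\) with eigenvalues at least \(\theta\). Since \(\operatorname{tr}\bar T=1\), \(\operatorname{rank}Q=k\le\sqrt{\log(2M)}\). Decompose \(\bar E_i\) into its \(QQ\), \(Q^\perp Q^\perp\) and cross blocks.

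\emph{Low block.} The low block satisfies \(Q^\perp\bar E_iQ^\perp\preceq Q^\perp\bar TQ^\perp\), whose operator norm is below \(\theta\). Hence its Frobenius norm squared is at most \(\theta\operatorname{tr}\le\theta\). Hanson--Wright for \(s\) independent copies and a union bound over \(M\) indices give a deviation of order
\[
\sqrt{\theta(\log(2M)+\log(1/\alpha))/s}+\theta(\log(2M)+\log(1/\alpha))/s.
\]
The first term is at most \(\sqrt{(\sqrt{\log(2M)}+\log(1/\alpha))/s}\). Under the standing assumption \(s\ge\sqrt{\log(2M)}\), the second term is dominated by the first.

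\emph{Top block.} This part needs no union bound. Write \(Q\bar E_iQ=\bar T_Q^{1/2}B_i\bar T_Q^{1/2}\) with \(0\preceq B_i\preceq I_k\). Its contribution to \(X_i\) is \(\operatorname{tr}(B_iD)\), where
\[
D=\bar T_Q^{1/2}(\widehat S-I_k)\bar T_Q^{1/2}
\]
and \(\widehat S\) is the \(k\)-dimensional sample covariance of the whitened probes. Hence
\[
\sup_i|\operatorname{tr}(B_iD)|\le\|D\|_*\le\sqrt{k}\,\|D\|_F\le\sqrt{k}\,\|\widehat S-I_k\|\operatorname{tr}\bar T_Q.
\]
A sample-covariance bound \(\|\widehat S-I\|\lesssim\sqrt{(k+\log(1/\alpha))/s}\) then gives the rate \((\log(2M))^{1/4}/\sqrt s\). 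A cleaner variant is to bound \(\|D\|_*\) directly through the Frobenius moment of \(D\), which is at most \(\operatorname{tr}(\bar T_Q)^2/s\cdot O(1)\); this avoids an extra \(\sqrt k\).

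\emph{Cross blocks.} For these I would use PSD Cauchy--Schwarz. With \(a=Qg\) and \(b=Q^\perp g\),
\[
|a^\top\bar E_ib|\le\sqrt{a^\top\bar E_ia}\,\sqrt{b^\top\bar E_ib}.
\]
This does not centre well, though. Instead I would write the cross term as a decoupled bilinear chaos \(\sum_k a_k^\top\bar E_i^{QQ^\perp}b_k\). Conditioning on the \(a_k\), it is Gaussian in the \(b_k\) with variance at most \(s^{-2}\sum_k a_k^\top\bar E_i^{QQ^\perp}\bar T^{\perp}\bar E_i^{Q^\perp Q}a_k\). Using \(\bar E_i\preceq\bar T\) and \(\|\bar T^\perp\|<\theta\), this is at most \(\theta Z(Q\bar TQ)/s\). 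A union bound then gives \(\sqrt{\theta\log(M/\alpha)/s}\), which is of the same order as the low block.

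\emph{Main obstacle.} I expect the hardest step to be the top block, specifically obtaining exactly \(\sqrt{k}\) rather than \(k\) in the nuclear-norm control of \(D\) with the right \(\log(1/\alpha)\) dependence. Combining the three pieces and tracking constants should then fit under the stated factor \(16\) and the \(\log(5/\alpha)\) term, splitting \(\alpha\) among \(X_0\), the low block, the cross blocks and the top block.
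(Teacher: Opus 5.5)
\textbf{There is a genuine gap in the top block.} Your overall plan matches the paper's. You normalize $\operatorname{tr}T=1$ and split the spectrum of $T$ so that the head has rank at most $\sqrt{\log(2M)}$ and the tail has operator norm at most about $1/\sqrt{\log(2M)}$. You then control all heads at once through the operator norm of a $k$-dimensional Gaussian sample covariance, union-bound the tail quadratic forms, and treat the cross term as conditionally Gaussian given the head. The problem is the top block, which is the step responsible for the $(\log M)^{1/4}$ scale.

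Your chain $\|D\|_*\le\sqrt k\,\|D\|_F\le\sqrt k\,\|\widehat S-I_k\|_{\mathrm{op}}\operatorname{tr}\bar T_Q$, combined with $\|\widehat S-I_k\|_{\mathrm{op}}\asymp\sqrt{k/s}$, gives $k/\sqrt s\asymp\sqrt{\log(2M)/s}$. That is not $(\log(2M))^{1/4}/\sqrt s$; it is exactly the naive union-bound rate, and it would yield a $\log M$ rather than $\sqrt{\log M}$ query factor.

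Your Frobenius-moment variant does not rescue this either. It gets $\sqrt{k/s}$ only in expectation. A tail bound on $\|D\|_F$, multiplied by $\sqrt k$, produces $\sqrt{k\log(1/\alpha)/s}$. That confidence dependence is multiplicative, whereas the statement needs the additive form $\sqrt{(k+\log(1/\alpha))/s}$.

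\textbf{The fix is to apply H\"older in the other order.} For symmetric $C$ and $D'\succeq0$, split $C=C_+-C_-$. This gives $\|D'^{1/2}CD'^{1/2}\|_*\le\operatorname{tr}(D'|C|)\le\|C\|_{\mathrm{op}}\operatorname{tr}D'$. Hence directly $\sup_i|\operatorname{tr}(B_iD)|\le\|D\|_*\le\|\widehat S-I_k\|_{\mathrm{op}}$, with no detour through $\|D\|_F$. The Gaussian singular-value bounds then give $\|\widehat S-I_k\|_{\mathrm{op}}\le 2a+a^2$, with $a=(\sqrt k+\sqrt{2t})/\sqrt s$, on one event of probability $1-2e^{-t}$ that serves every $i$. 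This has the correct additive form. It is precisely the paper's head step, where $\|F_{i,HH}\|_*\le1$ is paired with $\|G_HG_H^\top/s-I_h\|_{\mathrm{op}}$. With it, your ``main obstacle'' disappears.

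\textbf{Your reduction also needs repair.} The correct identity is $\widehat q_i-q_i=(X_i-q_iX_0)/(1+X_0)$. Because $|X_0|$ sits in the numerator, $|X_0|\le1/2$ is not sufficient. You need $|X_0|\lesssim\sqrt{\log(1/\alpha)/s}$, which Laurent--Massart does provide since $\|\bar T\|_F\le1$.

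The paper avoids this by centering with $F_i=E_i-q_iT$. This matrix has trace zero and satisfies $Z(F_i)=X_i-q_iX_0$, so the numerator is bounded blockwise as a single object. The denominator then needs only the one-sided bound $Z(T)\ge1-2\sqrt{t/s}$.

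\textbf{The explicit constant is not automatic in your version.} The paper reaches about $15.7<16$ through three choices:
\begin{enumerate}
\item the centering by $F_i$ above;
\item the sharper cross bound $\|F_{i,HL}\|_F^2\le1/(4r)$, which comes from $K_{i,HL}K_{i,LH}\preceq K_{i,HH}-K_{i,HH}^2\preceq I/4$;
\item a split of $\alpha$ into exactly five events of probability $e^{-t}$, with $t=\log(5/\alpha)$.
\end{enumerate}
Your version differs on each point. It carries a separate $|X_0|$ term in the numerator. Its cross variance is bounded only by $4\theta Z(Q\bar TQ)/s$: the cross term carries a factor $2$, and its conditional variance is $\frac{4}{s^2}\sum_k\|\bar E_i^{Q^\perp Q}a_k\|^2$. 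It may also use more failure events. So the explicit $16$ and the $\log(5/\alpha)$ term have to be rechecked, although the order of the bound is unaffected.
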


To explain the dependence on \(M\), normalize
\(\operatorname{tr}T=1\) and write
\(E_i=T^{1/2}K_iT^{1/2}\), where \(0\preceq K_i\preceq I\).
Centering the ratio gives
\(
F_i=E_i-q_iT,~
\operatorname{tr}F_i=0,~
\widehat q_i-q_i={Z(F_i)}/{Z(T)}.
\)
The shared envelope constrains how these centered matrices can
vary across directions.

Split \(T\) into a head containing its largest \(r\) eigenvalues
and a remaining tail.
For the head, \(\|F_{i,HH}\|_*\le1\), where
\(\|\cdot\|_*\) denotes the nuclear norm.
Every centered head contribution is therefore bounded by the
operator norm of the same centered sample covariance.
Gaussian singular-value concentration
\citep[Corollary~5.35]{vershynin2012nonasymptotic}
controls all heads together at leading scale \(\sqrt{r/s}\),
without a union bound over \(i\).

For the tail, trace normalization gives
\(\|T_L\|_{\mathrm{op}}\le1/r\) and
\(\|F_{i,LL}\|_F^2\le1/r\).
A union bound over the tail quadratic forms then gives the leading
scale \(\sqrt{\log(2M)/(rs)}\).
The interaction between head and tail is controlled by
\[
K_{i,HL}K_{i,LH}
\preceq K_{i,HH}-K_{i,HH}^2
\preceq I/4,
\]
which follows from \(K_i^2\preceq K_i\).
It yields \(\|F_{i,HL}\|_F^2\le1/(4r)\); conditioning on the
Gaussian head controls the cross contribution at the same
leading scale as the tail.
Thus, the argument also covers residual matrices that do not
commute.

Choosing \(r=\lceil\sqrt{\log(2M)}\rceil\) balances the two scales:
\[
\sqrt{\frac rs}
\asymp
\sqrt{\frac{\log(2M)}{rs}}
\asymp
\left(\frac{\sqrt{\log(2M)}}{s}\right)^{1/2}.
\]
If the ambient dimension is smaller, using the entire space as
the head gives an equal or smaller bound.
The estimation error therefore grows as
\((\log M)^{1/4}/\sqrt{s}\), producing a
\(\sqrt{\log M}\) query factor when accuracy proportional to \(g\)
is required.
Appendix~\ref{app:reuse:proof} supplies the complete block estimates,
confidence terms, and control of the common denominator \(Z(T)\).
The spectral split is used only in the proof and requires no
spectral information from the procedure.

Applying Proposition~\ref{prop:shared_envelope} once to the at most
\(2W\) boundary approximations, with \(\alpha=\delta\), proves
the upper bound in Theorem~\ref{thm:simultaneous_certification}.
Indeed, estimation error at most \(g/3\) puts both decision regions
strictly on the correct sides of \(\rho-g/2\).
For a countable family, each finite subset still has at most
\(2W\) boundaries, so the same budget applies uniformly;
increasing finite subsets then give the full-family guarantee.
Appendix~\ref{app:reuse:width-application} gives this application
and its explicit budget.

For the lower bound, begin with a covariance-testing problem.
A packing of output subspaces yields \(M\) trace-one covariances
\(\Sigma_i\) and fixed residual projectors \(R_j\), with
\(\log M\asymp d^2\), such that
\begin{equation}
\operatorname{tr}(R_i\Sigma_i)=\rho-2g,
~~
\operatorname{tr}(R_j\Sigma_i)\ge\rho+2g
~(j\ne i).
\label{eq:hard_family_separation}
\end{equation}
At the same time, the divergence between any two Gaussian
observation laws satisfies
\(\mathrm{KL}(\mathcal N(0,\Sigma_i)\|
\mathcal N(0,\Sigma_j))\lesssim_\rho g^2d\).
Lemma~\ref{app:width:packing} gives the packing, and
Appendix~\ref{app:width:covariance-section} constructs the
covariances and projectors.

For these energy ratios, simultaneous correctness forces the
accepted set to be exactly \(\{i\}\), identifying the hidden
covariance label.
Thus, decoding error is no larger than certification error.
Fano's inequality \citep{scarlett2019fano} requires, for a fixed
small error probability,
\(
s g^2d\gtrsim_\rho\log M\asymp d^2,
\text{hence}~
s\gtrsim_\rho{\sqrt{\log M}}/{g^2}.
\)

To transfer this bound to the operator oracle, draw the columns
of \(A\) independently from \(\mathcal N(0,\Sigma_i/n)\).
The Gaussian innovation representation
\eqref{eq:gaussian_innovation} reduces every adaptive transcript
to at most \(s\) covariance observations.
As \(n\to\infty\), the actual approximation errors concentrate
around the separated ratios in
\eqref{eq:hard_family_separation}, so the probability of violating
the intended decision regions vanishes.
Appendices~\ref{app:width:adaptive-section}--\ref{app:width:fano-section}
provide the transcript, leakage, and decoding arguments.

Taking \(M=W\) singleton paths gives the width term.
The single-approximation lower bound supplies
\(g^{-2}\log(1/\delta)\) and handles bounded \(W\);
the maximum of these two lower bounds is equivalent to their sum.
The uniform treatment of confidence levels and integer widths
is given in Appendix~\ref{app:width:confidence-section}.

Finally, the transformation \(B=AK^{1/2}\) from
Corollary~\ref{cor:output_certification} preserves the residual
ordering and implements each probe with one forward MVP.
The reuse upper bounds therefore extend to mean-square output
error for every given \(K\).
The corresponding minimax lower bounds hold when the worst case
also ranges over known second moments, since \(K=I_n\) recovers
the Frobenius problem; see Appendix~\ref{app:reuse:mean-square}.

\section{Certification Delay in Iterative Approximation}
\label{sec:certification_delay}

We now combine the cost of validation with the construction performed
while waiting for acceptance.
A smaller validation batch costs less, but may require further rank
expansions before there is enough evidence to stop.
The resulting trade-off depends on both the certification rule
and the spectrum of the operator.
The algorithm cannot observe the true error directly; it must certify
it from finitely many random probes. To control false acceptance,
the acceptance threshold must lie below the target, so reaching the
target need not produce immediate acceptance.

Fix a nested expansion path constructed independently of validation,
and reuse the same \(s\) probes throughout.
Let \(r_\star=\min\{r:q(P_r)\le\rho\}\) be the first position reaching
the true target, and let \(\tau_s\) be the first position accepted
by the certificate.
Define the additional construction and its combined cost by
\(
D_s:=(\tau_s-r_\star)_+,
\mathcal C(s):=c_{\mathrm v}s+c_{\mathrm b}\mathbb E D_s,
\)
where \(c_{\mathrm v},c_{\mathrm b}>0\) are fixed costs per validation
query and construction step.
Validation is paid for once, while construction before \(r_\star\)
is excluded.
Early acceptance contributes zero delay and is controlled separately
by the probability of false stopping.

\subsection{Uniform safety and spectrum-dependent power}
\label{sec:spectral_power}

The log-ratio bound from Section~\ref{sec:single_certification}
gives the energy-ratio stopping rule
\[
\ell(\widehat q)\le\ell(\rho)-u_s(\delta),
~~
u_s(\delta)=2\operatorname{arcosh}(\delta^{-1/s}).
\]
For every approximation with \(q\ge\rho\), the probability of
acceptance is at most \(\delta\).
The boundary reduction from Section~\ref{sec:nested_validation}
extends this guarantee to false stopping anywhere along the path.
For fixed \(\rho,\delta\), the corresponding threshold
\(\theta_s=\ell^{-1}(\ell(\rho)-u_s(\delta))\) satisfies
\(\rho-\theta_s=\Theta(s^{-1/2})\) as \(s\to\infty\).
This margin is calibrated uniformly over spectra.

To examine how another statistic responds to spectral dispersion,
consider a block-sign certificate.
Set \(H=E-\rho T\), partition \(s=Lb\) probes into \(L\)
predetermined blocks of size \(b\ge2\), and compute
\[
X_\ell(H)=\frac1b\sum_{i\in\mathcal I_\ell}g_i^\top Hg_i,
~~
V_L(H)=\sum_{\ell=1}^L\mathbf1\{X_\ell(H)\ge0\}.
\]
These statistics use the cached responses because
\(g_i^\top Hg_i=\|(I-P)Ag_i\|_2^2-\rho\|Ag_i\|_2^2\).
A negative block statistic supports acceptance; the rule stops
when sufficiently few blocks remain nonnegative.
Specifically, define
\[
\begin{aligned}
k_\delta(L)
&=
\max\left\{
k\in\{0,\ldots,L-1\}:\right.\\
&\qquad\left.
\mathbb P\{\operatorname{Bin}(L,e^{-1})\le k\}\le\delta
\right\},
\end{aligned}
\]
and accept when \(V_L(H)\le k_\delta(L)\).
If the defining set is empty, set \(k_\delta(L)=-1\), so the
rule always rejects.

\begin{proposition}[Block-sign safety]
\label{prop:block_sign}
For \(b\ge2\), fixed \(0\preceq E\preceq T\), and
\(\operatorname{tr}T>0\), the block-sign rule accepts with
probability at most \(\delta\) whenever \(q\ge\rho\).
The same bound controls acceptance at any such position along
a nested path constructed independently of validation.
\end{proposition}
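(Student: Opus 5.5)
Since the blocks are predetermined and use disjoint sets of independent probes, the statistics \(X_1(H),\dots,X_L(H)\) are i.i.d.\ and \(V_L(H)\sim\operatorname{Bin}(L,\pi)\) with \(\pi=\mathbb P\{X_1(H)\ge0\}\). A binomial CDF evaluated at a fixed \(k\) is decreasing in its success probability. So it suffices to prove
\[
\pi\ge e^{-1}\qquad\text{whenever } q\ge\rho,\ b\ge2.
\]
The definition of \(k_\delta(L)\) then gives \(\mathbb P\{V_L\le k_\delta(L)\}\le\mathbb P\{\operatorname{Bin}(L,e^{-1})\le k_\delta(L)\}\le\delta\). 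The case \(k_\delta(L)=-1\) is trivial, since the rule never accepts.

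\textbf{Reduction to a weighted chi-square.} Normalize \(\operatorname{tr}T=1\). The condition \(q\ge\rho\) means \(\operatorname{tr}H=\operatorname{tr}E-\rho\ge0\). Since \(0\preceq E\preceq T\), the eigenvalues of \(H=E-\rho T\) satisfy
\[
H\succeq-\rho T,\qquad \text{negative mass } r\le\rho,\qquad \text{positive mass } p\ge r.
\]
Let \(z\sim\mathcal N(0,I_{nb})\). Then \(bX_1\overset{d}{=}\sum_k \lambda_k w_k\), where the \(w_k\) are independent \(\chi^2_b\) variables and \(\lambda_k\) are the eigenvalues of \(H\). We need \(\mathbb P\{\sum_k\lambda_kw_k<0\}\le1-e^{-1}\) for every real weight vector with \(\sum_k\lambda_k\ge0\).

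\textbf{Bounding the lower tail via Gaussian moments.} Because \(\operatorname{tr}H\ge0\) gives \(\mathbb E Z(H)\ge0\), the mean lies on the nonnegative side, and a Paley--Zygmund or anti-concentration argument should control the negative tail. The first attempt is the exponential-moment route of Section~\ref{sec:single_certification}. For \(t>0\) with \(2t\lambda_k<1\) for all \(k\), Markov's inequality gives
\[
\mathbb P\{X<0\}\le\mathbb E e^{-tbX}=\prod_k(1+2t\lambda_k)^{-b/2}.
\]
This is a bound on \(\mathbb P\{X<0\}\) itself, so it would need the product to be at most \(1-e^{-1}\). That requires \(t\) large, which conflicts with the negative eigenvalues. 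This is where the constant \(e^{-1}\) must come from, and it is the main obstacle.

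\textbf{Plan for the obstacle: a two-eigenvalue extremal problem.} The alternative is to reduce to the worst case. The plan is:
\begin{enumerate}
\item Use a majorization or convexity argument to show that the positive part can be concentrated, or spread, to its least favorable form.
\item Reduce to \(H\) having one positive eigenvalue \(a\) and negative eigenvalues summing to \(a\), with \(\operatorname{tr}H=0\) as the boundary case.
\item Merge the negative part into a single \(\chi^2\) with many degrees of freedom, in the limit essentially the deterministic value \(-a\).
\end{enumerate}
The extremal case is then \(\mathbb P\{\chi^2_b/b\ge1\}\), or a ratio \(\mathbb P\{\chi^2_b\ge\chi^2_m\,b/m\}\). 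Showing \(\mathbb P\{\chi^2_b\ge b\}\ge e^{-1}\) for \(b\ge2\) is explicit at \(b=2\): \(\chi^2_2/2\) is exponential, so the probability equals exactly \(e^{-1}\). This explains the constant, and the requirement \(b\ge2\) excludes \(b=1\), where the heavy concentration near zero is problematic. For larger \(b\), a known median bound (the median of \(\chi^2_b\) is below \(b\)) gives probability above \(1/2\). Proving that this configuration is indeed extremal (Schur-concavity of \(\mathbb P\{\sum\lambda_kw_k\ge0\}\) in the appropriate sense) is the delicate step. I would attempt it by conditioning on the positive part and using that a sum of negative weighted \(\chi^2_b\) variables with fixed total mass is a mean-preserving contraction of a single one.

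\textbf{Extension along a nested path.} The statement for a nested path constructed independently of validation follows directly. Each fixed position with \(q_j\ge\rho\) has its own fixed \(H_j\) independent of \(G\), so the bound applies pointwise at that position.
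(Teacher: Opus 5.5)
\textbf{Gap 1: the central inequality is not proved.} Your reduction to $\pi=\mathbb P\{X_1(H)\ge0\}\ge e^{-1}$ via binomial domination is correct and matches the paper. However, you never establish that inequality. The extremal (Schur-concavity) reduction is left open, and the tool you propose for it cannot close it as stated. A mean-preserving contraction, i.e.\ convex order, compares only expectations of convex functions. The quantity $\mathbb P\{\sum_k\lambda_kw_k\ge0\}$ is the expectation of an indicator. After conditioning, it is the expectation of a survival function of a weighted $\chi^2_b$ sum, and this function is not convex in general; for $b>2$ it is concave left of the mode. Your median remark is also reversed. Since the median of $\chi^2_b$ lies below its mean $b$, one gets $\mathbb P\{\chi^2_b\ge b\}<1/2$, not $>1/2$.

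\textbf{The missing idea is log-concavity.} For $b\ge2$, $\chi^2_b$ is Gamma with shape $b/2\ge1$, so its density is log-concave. Scaling by a nonzero, possibly negative, eigenvalue preserves log-concavity, and so does convolution (Pr\'ekopa--Leindler). Hence $X_1(H)$ has a nondegenerate log-concave density for every $H\ne0$; the case $H=0$ gives $X_1=0$ and is trivial. For such an $X$, the survival function $S(x)=\mathbb P(X\ge x)$ is log-concave and $S(X)$ is uniform on $(0,1)$. Jensen then gives
\[
\log S(\mathbb EX)\ge\mathbb E\log S(X)=-1 .
\]
Since $\mathbb EX_1(H)=\operatorname{tr}H\ge0$, you obtain $\mathbb P\{X_1(H)\ge0\}\ge S(\mathbb EX_1(H))\ge e^{-1}$. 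This holds uniformly over all $H$ with no extremal analysis. It is exactly where $b\ge2$ enters, and your one-positive-eigenvalue configuration appears only as the sharpness example.

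\textbf{Gap 2: the pathwise part proves less than is claimed and needed.} A bound at each fixed unsafe position does not control the probability that the rule accepts at \emph{some} unsafe position. That union is what governs false stopping of the first-acceptance time $\tau_s$, and it is what the paper's pathwise claim asserts. A union over many positions is not bounded by $\delta$ through per-position bounds. The missing step is monotonicity:
\begin{itemize}
\item Along a nested path, $H_r=E_r-\rho T$ decreases in the Loewner order.
\item So for every probe realization, each $X_\ell(H_r)$ and hence $V_L(H_r)$ is nonincreasing in $r$.
\item The unsafe positions form a prefix, so the acceptance events on that prefix are nested.
\item Their union therefore equals acceptance at the last unsafe index, where the single-candidate bound applies.
\item For an infinite path, apply this to finite prefixes and use continuity from below, after conditioning on the independent construction randomness.
\end{itemize}
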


\begin{proof}
When \(q\ge\rho\), we have \(\operatorname{tr}H\ge0\).
Lemma~\ref{app:stopping:quadratic-sign} establishes
\(\mathbb P\{X_\ell(H)\ge0\}\ge e^{-1}\) through a log-concavity
argument.
The independent blocks therefore give
\(V_L(H)\sim\operatorname{Bin}(L,p)\) for some \(p\ge e^{-1}\),
and the definition of \(k_\delta(L)\) bounds acceptance by
\(\delta\).
Along a nested path, every block statistic and hence \(V_L\)
is nonincreasing, so the unsafe-prefix boundary argument applies.
Appendix~\ref{app:stopping:sign-calibration} gives the complete
sign-probability and pathwise arguments.
\end{proof}

The cutoff depends only on \(L,\delta\), whereas the probability
of acceptance below the target depends on the fluctuations of
the block statistics.
Appendix~\ref{app:stopping:power} quantifies this dependence through
their variance relative to their squared mean.
Thus, spectral structure affects power without being an input
to the calibration.

\subsection{A separation on the same spectral family}
\label{sec:delay_separation}

Consider the family
\[
A_{M,N}=
\operatorname{diag}\left(
\sqrt{\frac{1-\rho}{M}}\,I_M,\,
\sqrt{\frac{\rho}{N}}\,I_N
\right).
\]
A coordinate expansion path first captures the initial \(M\)
directions and then adds the remaining directions one at a time.
Consequently, \(r_\star=M\) and
\(q(P_{M+j})=\rho(1-j/N)\).
Both certificates use this same operator, path, and reliability
level.
When \(k_\delta(L)\ge0\), both accept almost surely at full rank,
so their stopping costs are finite.

\begin{theorem}[Diffuse-spectrum delay and allocation]
\label{thm:diffuse_delay}
Fix \(\rho\in(0,1)\), \(\delta\in(0,1/2)\), and an integer \(L\)
such that \(k=k_\delta(L)\ge0\).
Suppose \(M/N\to\kappa\in(0,\infty)\) as \(N\to\infty\).
For \(s=Lb\to\infty\), with integer \(b\ge2\) and \(b=o(N)\),
\begin{equation}
\mathbb E D_s^{\mathrm{sign}}
\sim C_{\mathrm{sign}}\sqrt{N/s},
~~
\mathbb E D_s^{\mathrm{ratio}}
\sim C_{\mathrm{ratio}}N/\sqrt{s},
\label{eq:diffuse_delay_asymptotics}
\end{equation}
where the positive constants depend only on the fixed parameters.
For fixed positive cost weights \(c_{\mathrm v},c_{\mathrm b}\),
\(
\min_{\substack{b\in\mathbb N, b\ge2}}
\mathcal C_N^{\mathrm{sign}}(Lb)
=\Theta(N^{1/3}),~
\min_{\substack{s\in\mathbb N, s\ge1}}
\mathcal C_N^{\mathrm{ratio}}(s)
=\Theta(N^{2/3}).
\)
The optimal budgets have the same respective orders.
Optimization is within the two specified rule families,
keeping \(L,k\) fixed for block signs and excluding construction
of the initial \(M\) directions.
\end{theorem}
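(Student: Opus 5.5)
The plan is to compute, for both rules, the exact pathwise structure on \(A_{M,N}\), reduce \(\mathbb E D_s\) to a sum of marginal non-acceptance probabilities, and identify the scale of \(j\) at which those probabilities decay.

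\textbf{Reduction to marginals.} Write \(j\) for the number of tail directions captured, so position \(M+j\) has \(q=\rho(1-j/N)\). By Proposition~\ref{prop:block_sign} and the nested-path ordering of Section~\ref{sec:nested_validation}, both the ratio estimate \(\widehat q_{M+j}\) and the vote count \(V_L(H_{M+j})\) are nonincreasing in \(j\). Hence \(\tau_s>M+j\) iff position \(M+j\) is rejected, and
\[
\mathbb E D_s=\sum_{j=0}^{N-1}\mathbb P\{\text{position }M+j\text{ rejected}\}-(\text{early-acceptance correction}).
\]
The correction is bounded by \(\delta\)-controlled events and is of lower order. Only one-position marginals are needed, so the dependence across \(j\) never enters.

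\textbf{Ratio rule.} In the diagonal coordinates,
\[
Z(E_{M+j})=\frac{\rho}{N}\cdot\frac{\chi^2_{s(N-j)}}{s},
\qquad
Z(T)=\frac{1-\rho}{M}\cdot\frac{\chi^2_{sM}}{s}+\frac{\rho}{N}\cdot\frac{\chi^2_{sN}}{s}.
\]
Thus \(\widehat q_{M+j}=\rho(1-j/N)+O_P((sN)^{-1/2})\), uniformly in \(j\) by a maximal chi-square bound. The threshold satisfies
\[
\rho-\theta_s\sim\rho(1-\rho)u_s,\qquad u_s\sim 2\sqrt{2\log(1/\delta)/s},
\]
using \(\operatorname{arcosh}(1+x)\sim\sqrt{2x}\) and \(\ell'(\rho)=1/(\rho(1-\rho))\). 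The fluctuation \((sN)^{-1/2}\) is negligible against this margin \(s^{-1/2}\). Therefore rejection occurs essentially iff \(j/N<(\rho-\theta_s)/\rho\), which gives
\[
\mathbb E D^{\mathrm{ratio}}_s\sim 2(1-\rho)\sqrt{2\log(1/\delta)}\;N/\sqrt s.
\]
Dominated convergence applies because \(D\le N\).

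\textbf{Sign rule.} For \(H_{M+j}=E_{M+j}-\rho T\), the trace is \(-\rho j/N\). Its Frobenius norm satisfies
\[
\|H\|_F^2=\rho^2(1-\rho)^2\bigl(1/M+1/N\bigr)\bigl(1+O(j/N)\bigr)\sim\sigma^2/N,
\qquad \sigma^2=\rho^2(1-\rho)^2(1+1/\kappa).
\]
Each block statistic is a weighted sum of \(b(M+N)\) independent \(\chi^2_1\) variables with maximal weight \(O(1/N)\). Lyapunov's CLT, using \(b(M+N)\to\infty\), gives
\[
\mathbb P\{X_\ell\ge0\}=\Phi\!\left(-\frac{\rho j\sqrt b}{\sigma\sqrt{2N}}\right)+o(1),
\]
uniformly in \(j\). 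The blocks are independent, so rejection has probability \(\psi_L(p_j)=\mathbb P\{\operatorname{Bin}(L,p_j)>k\}\). The relevant scale is \(j\asymp\sqrt{N/b}\), and \(b=o(N)\) makes this scale diverge, so the sum becomes a Riemann integral:
\[
\mathbb E D^{\mathrm{sign}}_s\sim \sqrt{\frac{N}{b}}\,\frac{\sigma\sqrt2}{\rho}\int_0^\infty \psi_L(\Phi(-x))\,dx
=C_{\mathrm{sign}}\sqrt{N/s}.
\]
Here \(C_{\mathrm{sign}}\) absorbs \(\sqrt L\). Since \(k\ge0\), \(\psi_L(p)\le Lp\), so the integral is finite. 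Dominated convergence needs a uniform tail bound for \(\mathbb P\{X_\ell\ge0\}\) at large \(j\sqrt{b/N}\); I would get it from a Bernstein bound for Gaussian chaos, \(\exp[-c\min(\mu^2/v,\mu/\|H\|_{\mathrm{op}})]\).

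\textbf{Allocation.} Given the delays, minimizing \(c_{\mathrm v}Lb+c_{\mathrm b}C\sqrt{N/(Lb)}\) gives \(b\asymp N^{1/3}\) and cost \(\Theta(N^{1/3})\). Minimizing \(c_{\mathrm v}s+c_{\mathrm b}CN/\sqrt s\) gives \(s\asymp N^{2/3}\) and cost \(\Theta(N^{2/3})\). The asymptotics only hold for \(s\to\infty\) and \(b=o(N)\). The lower bounds over all budgets therefore need non-asymptotic versions: \(\mathbb E D^{\mathrm{ratio}}_s\gtrsim\min(N,N/\sqrt s)\) and \(\mathbb E D^{\mathrm{sign}}\gtrsim\min(N,\sqrt{N/b})\), uniformly in \(b\ge2\) and \(s\ge1\).

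For the ratio rule, small \(s\) is handled because \(\rho-\theta_s\) is bounded below. For the sign rule, a Paley--Zygmund or Berry--Esseen lower bound on \(\mathbb P\{X_\ell\ge0\}\) for \(j\lesssim\sqrt{N/b}\) gives a constant rejection probability. Large budgets are dominated by \(c_{\mathrm v}s\).

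I expect the main obstacle to be these uniform bounds, especially the sign-probability estimates valid simultaneously for bounded \(b\) and for \(b\) comparable to \(N\). Those estimates also identify the optimal budgets' orders.
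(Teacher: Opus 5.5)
Your route is correct, and it differs from the paper's in two places. For the delays, the paper works at the process level. It writes each block statistic as a Gaussian offset minus a prefix sum of tail energies, uses Doob's $L^2$ maximal inequality to make the centered prefix negligible on the $\sqrt{N/b}$ scale, and derives the joint limit of the block hitting times. This gives $D_s^{\mathrm{sign}}/\sqrt{N/b}\Rightarrow A_\kappa(Z_{(L-k)})_+$, and uniform integrability is then proved separately with a Cantelli tail.

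You instead use monotonicity to write $D_s=\sum_{j=0}^{N-1}\mathbf 1\{\text{reject at }M+j\}$ pathwise. Since the $L$ blocks at a fixed position are i.i.d., this gives the exact identity $\mathbb E D_s^{\mathrm{sign}}=\sum_j\psi_L(p_j)$. It needs only one-position CLTs and never touches dependence across $j$. Your constant agrees with the paper's, because $\psi_L(\Phi(-x))=\mathbb P\{Z_{(L-k)}>x\}$ and hence $\int_0^\infty\psi_L(\Phi(-x))\,dx=\mathbb E[(Z_{(L-k)})_+]$. Your route is more elementary for means, but it yields no distributional limit.

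For the allocation, the paper locates all global minimizers by a subsequence and compactness argument. It excludes bounded $b$ and bounded $s$ via the bounded-$b$ sign limit and the fixed-$s$ ratio limit. This gives exact asymptotics such as $s^*\sim aN^{1/3}$ and minimal cost $\sim 3c_{\mathrm v}aN^{1/3}$, sharper than the stated $\Theta$. Your uniform lower bounds give exactly the $\Theta$ claims for costs and budgets, and they are obtainable:
\begin{itemize}
\item Berry--Esseen for a block statistic has error of order $(b(M+N))^{-1/2}$, uniformly in $b\ge2$.
\item Since $k<L$, we have $\psi_L(p)\ge p^L$.
\item Safety at $j=0$, where $q=\rho$, gives $\mathbb E D_s\ge1-\delta$ for every budget. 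This covers $\sqrt{N/b}\lesssim1$, and $N/\sqrt s\lesssim1$ for the ratio rule.
\end{itemize}

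Two justifications need repair, though neither affects the plan. First, there is no early-acceptance correction. Acceptance before $M$ forces acceptance at every $M+j$, so both sides of your identity vanish on that event. This matters, since a term controlled only by probability $\delta$ would not be lower order at fixed $\delta$.

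Second, for the ratio rule, $D\le N$ does not dominate $\sqrt s\,D_s/N$, which is bounded only by $\sqrt s\to\infty$. Use your maximal bound quantitatively instead. On the event $\{\max_j|\widehat q_{M+j}-q_{M+j}|\le\epsilon/\sqrt s\}$, the delay lies within $\epsilon N/(\rho\sqrt s)+1$ of $N(\rho-\theta_s)/\rho$. Doob and Chebyshev bound the probability of the complement by $O(1/(\epsilon^2N))$. Its contribution to $\sqrt s\,\mathbb E D_s/N$ is therefore $O(\sqrt s/(\epsilon^2N))\to0$, because $s=o(N)$. This plays the role of the paper's crossing inequality $|j_N/N-(h_N)_+|\le\Delta_N+1/N$.

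Finally, for the sign-rule tail, Cantelli together with $\psi_L(p)\le Lp$ already gives an integrable $x^{-2}$ bound, so the Gaussian-chaos Bernstein inequality is more than you need.
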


The delay scales follow by comparing the initial fluctuations
with the error decrease produced by each additional direction.
Let \(X_\ell(j)\) denote a block statistic after \(j\) tail
directions have been captured.
When \(M/N\to\kappa\in(0,\infty)\),
\(
\operatorname{sd}(X_\ell(0))
\asymp \frac1{\sqrt{bN}},~
\mathbb E X_\ell(j)=-\frac{\rho j}{N}.
\)
The accumulated negative drift reaches the initial fluctuation
scale after order \(\sqrt{N/b}\) steps.
For fixed \(L,k\), acceptance requires a fixed number of blocks
to cross zero, changing the constant but preserving this scale.
Since \(s=Lb\), the resulting delay is of order \(\sqrt{N/s}\).

For the energy-ratio rule, the threshold remains order
\(s^{-1/2}\) below \(\rho\), while the estimation fluctuations
on this family have the smaller scale \((sN)^{-1/2}\).
Each additional direction reduces the true error by \(\rho/N\),
so covering the calibrated margin requires order \(N/\sqrt{s}\)
further steps.
Propositions~\ref{app:stopping:sign-limit}
and~\ref{app:stopping:ratio-limit} establish these crossing limits,
prove convergence of the expectations, give the exact constants
in \eqref{eq:diffuse_delay_asymptotics}.

Within these asymptotic regimes, the combined costs therefore
have the balances
\[
\mathcal C_N^{\mathrm{sign}}(s)
\asymp s+\sqrt{N/s},~~
\mathcal C_N^{\mathrm{ratio}}(s)
\asymp s+\frac{N}{\sqrt{s}},
\]
up to fixed positive coefficients.
Balancing validation against additional construction produces
the respective \(N^{1/3}\) and \(N^{2/3}\) budgets and costs.
Appendix~\ref{app:stopping:allocation-proof} handles the remaining
budget regimes and proves global optimality over the allowed
integer budgets.

For mean-square output error, both rules use the transformed
responses \(A(K^{1/2}g_i)\) from
Corollary~\ref{cor:output_certification}, retaining the same
safety guarantees.
The delay comparison applies when the effective operator
\(B=AK^{1/2}\) and its expansion path have the spectral structure
above; see Appendix~\ref{app:stopping:functional}.
\section{Numerical Experiments}
\label{sec:experiments}

The experiments examine three consequences of the theory:
the single-candidate confidence exponent, the width-dependent
cost of shared validation, and the effect of spectral structure
on stopping cost.
Synthetic operators provide known errors and controlled candidate
families. Gaussian response statistics are sampled from their exact
finite-dimensional distributions.
Full configurations, sampling checks, and additional results
are provided in the appendix.

\begin{figure*}[t]
    \centering
    \includegraphics[width=0.8\linewidth]{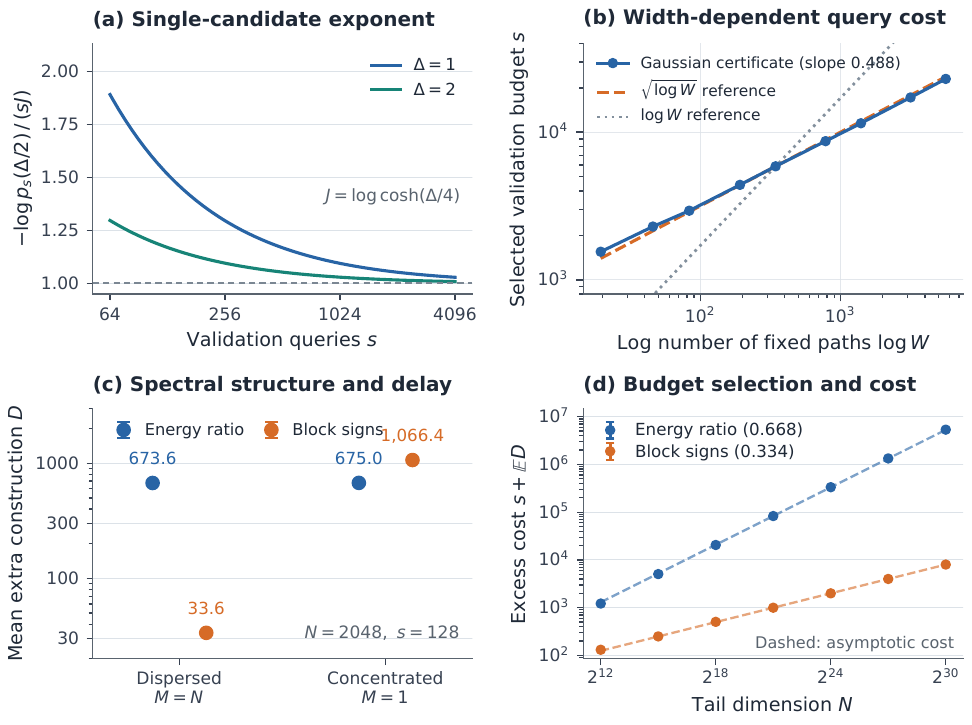}
    \caption{\small
    Numerical illustrations of the certification theory.
    (a) The normalized F-tail exponent approaches one.
    (b) Selected Gaussian validation budgets versus $\log W$;
    dashed and dotted curves indicate square-root logarithmic
    and logarithmic reference slopes, respectively.
    (c) Spectral dispersion changes the relative stopping delays
    of the two certificates.
    (d) Independently evaluated costs at selected validation budgets;
    dashed curves show the asymptotic minimum costs within
    the two rule families.
    Legend exponents in (b,d) are descriptive log--log fits.
    Error bars in (c,d) indicate $\pm2$ Monte Carlo standard errors;
    those in (d) are conditional on the selected budgets.
    }
    \label{fig:certification_experiments}
\end{figure*}

\paragraph{Single-candidate confidence exponent.}
To examine the finite-query behavior of
Theorem~\ref{thm:single_certification}, we numerically evaluate
the F-tail lower bound $p_s(\Delta/2)$ for $\Delta\in\{1,2\}$.
Figure~\ref{fig:certification_experiments}(a) plots
$-\log p_s(\Delta/2)/(sJ)$, where $J=\log\cosh(\Delta/4)$.
At $s=4096$, the normalized exponents are $1.029$ and $1.009$,
respectively. Their approach to one illustrates how the lower
and upper bounds acquire the same leading confidence exponent.

\paragraph{Shared validation and the width mechanism.}
To examine the $\sqrt{\log W}$ contribution in
Theorem~\ref{thm:simultaneous_certification}, we construct fixed
families of noncommuting residual energy matrices sharing one
positive-semidefinite envelope.
The effective dimension ranges from $d=8$ to $128$, with
$W=2^{d(d-1)/2}$ ordered paths.
Each path contains candidates at both decision boundaries,
$\rho=0.5$ and $\rho-g=0.49$.
Increasing dimension together with width avoids the saturation
inherent in a fixed-dimensional experiment.
The construction permits the maximum validation error over the
entire finite family to be computed without enumeration;
its attainment is checked for every simulated sketch.

Budgets are selected by screening toward a simultaneous error
rate of $0.1$, followed by $8192$ fresh evaluation trials.
Over the largest five dimensions, the selected budgets have
log--log slope $0.488$ against $\log W$, close to the predicted
$1/2$ (Figure~\ref{fig:certification_experiments}(b)).
At fixed $s=4096$, the mean maximum error has slope $0.256$,
consistent with the underlying
$(\log W)^{1/4}/\sqrt{s}$ scale.
Independent evaluation rates range from $2.32\%$ to $10.72\%$;
these are screening-selected budgets, not exact minimum budgets
at a common error probability.
The results illustrate the width mechanism for Gaussian validation;
the lower bound for arbitrary adaptive queries is established
by theory.
The appendix also reports repeated-check experiments in which
retaining both decision boundaries makes the failure event
unchanged when intermediate checks are added.

\paragraph{Spectral structure and stopping cost.}
We compare the energy-ratio and block-sign certificates on the
diagonal family from Section~\ref{sec:certification_delay},
using $\rho=0.1$, $\delta=0.05$, and $L=32$ sign blocks.
Both certificates follow the same coordinate expansion path.
At $N=2048$ and $s=128$, dispersing the captured energy over
$M=N$ directions yields mean additional construction costs
of $673.6$ steps for the ratio certificate and $33.6$ for block signs
(Figure~\ref{fig:certification_experiments}(c)).
Concentrating that energy in one direction ($M=1$) reverses
the ordering, demonstrating the instance dependence of the gain.

To assess whether the separation persists after budget selection,
we vary $M=N$ from $2^{12}$ to $2^{30}$.
Each rule selects its budget from the same prespecified geometric
grid using screening samples, followed by $8192$ fresh evaluation
trials at the selected budget.
The cost is $s+\mathbb{E}D$, excluding construction before
the true target.
Over the largest four dimensions, the fitted cost exponents are
$0.334$ for block signs and $0.668$ for the ratio certificate
(Figure~\ref{fig:certification_experiments}(d)).
These grid-selected costs agree with the predicted
$N^{1/3}$ and $N^{2/3}$ growth within the two specified rule families.

\section{Conclusion and Limitations}
We characterize the information needed to certify relative error
in iterative low-rank approximation. The results identify an exact
single-candidate confidence constant, an optimal width dependence
for shared validation, and a separation between reliable decisions
and prompt stopping. The guarantees require candidate families
independent of validation, and query complexity excludes the
arithmetic cost of evaluating candidates. The stopping-cost
separation concerns excess work within two specified rule families,
rather than total runtime or optimality over all certificates.
Extending these guarantees to candidates constructed using the
validation responses remains an open direction.

\section*{AI Use Statement}
Generative AI tools were used to check for typographical errors and to provide an auxiliary review for reference. They were also used to draft and check the section \textit{SAMPLING CHECKS AND REPRODUCIBILITY} in the Appendix. The authors take full responsibility for the content.

\bibliography{references}
\bibliographystyle{icml2026}
\clearpage

\appendix
\clearpage
\onecolumn
\numberwithin{equation}{section}

\section{Proofs for Single-Approximation Certification}
\label{app:single}

This appendix proves Theorem~\ref{thm:single_certification} and
Corollary~\ref{cor:output_certification}. We give the oracle model explicitly,
prove the joint Gaussian bound used by the certificate, and then establish
the matching information bound. The Gaussian innovation argument below
allows an arbitrary positive definite output covariance, so it also
applies to the multiple-hypothesis construction for simultaneous
certification.

\subsection{Oracle model and dimension-uniform risk}
\label{app:single:model}

Fix \(0<g<\rho<1\). The unknown matrix
\(A\in\mathbb R^{m\times n}\) is nonzero, and the supplied matrix
\(P\in\mathbb R^{m\times m}\) is an orthogonal projection:
\(P=P^\top=P^2\). Write
\(q(A,P)=\|(I_m-P)A\|_F^2/\|A\|_F^2\), and define
\[
\mathcal H_0(P)=\{A\ne0:q(A,P)\ge\rho\},
~~
\mathcal H_1(P)=\{A\ne0:q(A,P)\le\rho-g\}.
\]
The decision \(1\) means acceptance and the decision \(0\) means
rejection. Neither decision is prescribed outside
\(\mathcal H_0(P)\cup\mathcal H_1(P)\).

A procedure receives \((m,n,P,\rho,g)\), has private randomness
independent of \(A\), and observes \(A\) only through calls
\(v\mapsto Av\). Each call returns the full output vector. Query
directions, stopping decisions, and the final decision are measurable
functions of the supplied information, private randomness, and preceding
query--response pairs. The procedure makes at most \(s\) calls on every
run; equivalently, a procedure that stops earlier can be padded with
unused zero queries. All oracle calls made by the procedure count,
including any construction responses supplied for use in its decision.
There is no additional free transcript depending on \(A\).

For a procedure \(\mathcal M_{m,n,P}\), define
\[
\begin{aligned}
\alpha(\mathcal M_{m,n,P})
&=\sup_{A\in\mathcal H_0(P)}
  \mathbb P_A\{\mathcal M_{m,n,P}=1\},\\
\beta(\mathcal M_{m,n,P})
&=\sup_{A\in\mathcal H_1(P)}
  \mathbb P_A\{\mathcal M_{m,n,P}=0\}.
\end{aligned}
\]
Here the probability concerns the procedure's randomness for a fixed
matrix, and a supremum over an empty hypothesis class is defined as
zero. A family of procedures may depend on the known dimensions and
projection. The dimension-uniform minimax risk is
\begin{equation}
R_s(\rho,g)
=\inf_{\{\mathcal M_{m,n,P}\}}
  \sup_{\substack{m,n\ge1\\P=P^\top=P^2}}
  \max\{\alpha(\mathcal M_{m,n,P}),
          \beta(\mathcal M_{m,n,P})\}.
\label{app:single:risk}
\end{equation}
All procedures in the infimum obey the same budget \(s\). The optimal
budget is
\(s^\star(\delta;\rho,g)=\min\{s\ge1:R_s(\rho,g)\le\delta\}\).

For upper bounds, \(P\) may instead result from an earlier construction
stage: conditional on that stage, the proof applies to fresh independent
validation probes. This conditional statement counts additional
validation calls. It does not give a lower bound on additional calls
when arbitrary construction observations have been supplied for free.

\subsection{A joint exponential moment bound}
\label{app:single:joint_moment}

For independent \(g_1,\ldots,g_s\sim\mathcal N(0,I_n)\), set
\[
Z(M)=\frac1s\sum_{i=1}^s g_i^\top M g_i.
\]
All matrices in a comparison use these same probes. If \(M\succeq0\)
and \(\operatorname{tr}(M)>0\), then \(Z(M)>0\) almost surely.

\begin{lemma}[An exponential moment determined by two traces]
\label{app:single:moment}
Let \(U,V\succeq0\), with
\(\operatorname{tr}(U)=a>0\) and \(\operatorname{tr}(V)=b>0\).
For \(z\sim\mathcal N(0,I_n)\) and \(0\le t<1/(2b)\),
\begin{equation*}
\mathbb E\exp\{-t z^\top(U-V)z\}
\le [(1+2ta)(1-2tb)]^{-1/2}.
\end{equation*}
When \(n\ge2\), equality is attained by
\(U=a uu^\top\) and \(V=b vv^\top\) for orthogonal unit vectors
\(u,v\).
\end{lemma}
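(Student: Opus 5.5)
We diagonalize \(H=U-V\) and reduce the claim to a scalar inequality over eigenvalues. Write \(H=\sum_k \mu_k w_kw_k^\top\) with orthonormal \(w_k\). By rotational invariance of \(z\), the coordinates \(z^\top w_k\) are independent standard normals. Hence, whenever \(1+2t\mu_k>0\) for all \(k\),
\[
\mathbb E\exp\{-t z^\top H z\}=\prod_k(1+2t\mu_k)^{-1/2}=\det(I_n+2tH)^{-1/2}.
\]
Let \(p=\sum_{\mu_k>0}\mu_k\) and \(r=\sum_{\mu_k<0}|\mu_k|\) be the positive and absolute negative eigenvalue masses.

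The first step bounds these masses using only the traces. Since \(V\succeq0\), we have \(H\preceq U\). Let \(Q_+\) be the projector onto the positive eigenspace of \(H\). Then
\[
p=\operatorname{tr}(Q_+H)\le\operatorname{tr}(Q_+U)\le\operatorname{tr}U=a,
\]
using \(U\succeq0\). Symmetrically, \(-H\preceq V\) gives \(r\le b\). For \(t<1/(2b)\), every negative eigenvalue satisfies \(2t|\mu_k|\le 2tr\le 2tb<1\). So the moment identity is valid and the expectation is finite.

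The second step groups the factors. For the nonnegative part, expanding the product gives \(\prod_{\mu_k>0}(1+2t\mu_k)\ge 1+2tp\). For the negative part, the Weierstrass product inequality \(\prod(1-x_k)\ge1-\sum x_k\), valid for \(x_k\in[0,1]\), gives \(\prod_{\mu_k<0}(1-2t|\mu_k|)\ge1-2tr\). Therefore
\[
\det(I+2tH)\ge(1+2tp)(1-2tr).
\]

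The main obstacle is replacing \((p,r)\) by \((a,b)\). We have \(p\le a\) and \(r\le b\), but \((1+2tp)(1-2tr)\) increases in \(p\) and decreases in \(r\). The bound on \(r\) therefore goes the right way, but \(p\le a\) goes the wrong way. We use the additional trace constraint \(p-r=\operatorname{tr}H=a-b\), so \(p=r+a-b\). Set \(f(r)=(1+2t(r+a-b))(1-2tr)\) on the feasible range \(r\in[\max(0,b-a),b]\). Its derivative in \(r\) is \(2t[(1-2tr)-(1+2t(r+a-b))]=2t[2t(b-a)-4tr]\cdot\)\(\tfrac{1}{1}\), that is, \(4t^2(b-a-2r)\). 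Thus \(f\) is concave in \(r\), and its minimum over the interval is attained at an endpoint. At \(r=b\) we get \(p=a\) and \(f=(1+2ta)(1-2tb)\). At \(r=\max(0,b-a)\) we get either \(f=1+2t(a-b)\) when \(a\ge b\), or \(f=1-2t(b-a)\) when \(a<b\). In both cases this endpoint value exceeds \((1+2ta)(1-2tb)\), because
\[
(1+2ta)(1-2tb)=1+2t(a-b)-4t^2ab.
\]
Hence \(\det(I+2tH)\ge(1+2ta)(1-2tb)\), and taking the \(-1/2\) power gives the bound in the lemma.

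For equality, take \(U=a\,uu^\top\) and \(V=b\,vv^\top\) with \(u\perp v\), which requires \(n\ge2\). Then \(H\) has eigenvalues \(a\) and \(-b\), with the remaining eigenvalues zero. The determinant equals \((1+2ta)(1-2tb)\) exactly, so the moment identity gives equality.
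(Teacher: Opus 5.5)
Your proposal is correct and follows essentially the same route as the paper: diagonalize \(U-V\), bound the positive and negative eigenvalue masses by \(p\le a\) and \(r\le b\) using spectral projections, group the determinant factors into \((1+2tp)(1-2tr)\), and use \(p-r=a-b\). The only difference is the last step, where the paper expands \((1+2tp)(1-2tr)=1+2t(a-b)-4t^2pr\) and invokes \(pr\le ab\) directly, which replaces your concavity-in-\(r\) endpoint analysis with a one-line comparison.
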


\begin{proof}
Let \(D=U-V\), and denote its positive eigenvalues by
\(\lambda_i\) and its negative eigenvalues by \(-\mu_j\), with
\(\lambda_i,\mu_j>0\). Define
\[
p=\sum_i\lambda_i,~~ r=\sum_j\mu_j.
\]
Zero eigenvalues are omitted, and an empty sum is zero. Taking the trace
gives \(p-r=a-b\). If \(\Pi_-\) is the negative spectral projection
of \(D\), then
\[
r=-\operatorname{tr}(\Pi_-D)
 =\operatorname{tr}(\Pi_-V)-\operatorname{tr}(\Pi_-U)
\le\operatorname{tr}(V)=b.
\]
Indeed, the trace of a product of positive semidefinite matrices is
nonnegative, and \(0\preceq\Pi_-\preceq I_n\). The positive spectral
projection gives \(p\le a\) in the same way. Thus \(pr\le ab\), and
every factor \(1-2t\mu_j\) is positive.

For nonnegative \(x_i\), expansion gives
\(\prod_i(1+x_i)\ge1+\sum_i x_i\). For nonnegative \(y_j\) with
\(\sum_jy_j<1\), induction gives
\(\prod_j(1-y_j)\ge1-\sum_jy_j\). Applying the two inequalities
separately yields
\begin{align}
\det(I_n+2tD)
&=\prod_i(1+2t\lambda_i)\prod_j(1-2t\mu_j)\notag\\
&\ge(1+2tp)(1-2tr)\notag\\
&=1+2t(a-b)-4t^2pr\notag\\
&\ge1+2t(a-b)-4t^2ab
 =(1+2ta)(1-2tb).
\label{app:single:determinant}
\end{align}
In particular, \(I_n+2tD\) is positive definite. Orthogonal
diagonalization preserves independent standard Gaussian coordinates.
The one-dimensional Gaussian integral
\(\mathbb E e^{-t\lambda Z^2}=(1+2t\lambda)^{-1/2}\), valid when
\(1+2t\lambda>0\), therefore gives
\[
\mathbb E e^{-t z^\top Dz}=\det(I_n+2tD)^{-1/2}.
\]
Combining this identity with \eqref{app:single:determinant} proves
the bound. For the stated orthogonal rank-one pair, the only nonzero
eigenvalues are \(a\) and \(-b\), so every inequality is an equality.
\end{proof}

\begin{lemma}[Two-sided log-ratio concentration]
\label{app:single:ratio_tail}
Let \(B,C\succeq0\) have positive traces. Define
\[
L_s=\log\frac{Z(B)}{Z(C)}
    -\log\frac{\operatorname{tr}(B)}{\operatorname{tr}(C)},
~~
I(u)=\log\cosh(u/2).
\]
For every \(u>0\),
\begin{equation}
\mathbb P\{L_s\le-u\}\le e^{-sI(u)},
~~
\mathbb P\{L_s\ge u\}\le e^{-sI(u)}.
\label{app:single:ratio_tails}
\end{equation}
No independence between \(Z(B)\) and \(Z(C)\) is required.
\end{lemma}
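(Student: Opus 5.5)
We reduce both tails to the exponential moment bound of Lemma~\ref{app:single:moment} via Chernoff, then optimize the free parameter. We treat the lower tail first; the upper tail then follows by symmetry.

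\emph{Lower tail.} Normalize by setting \(\bar B=B/\operatorname{tr}(B)\) and \(\bar C=C/\operatorname{tr}(C)\). These are positive semidefinite with unit trace, and \(L_s=\log Z(\bar B)-\log Z(\bar C)\). Since \(Z(\bar C)>0\) almost surely, the event \(L_s\le -u\) equals the event \(Z(\bar B)-e^{-u}Z(\bar C)\le0\), which is \(Z(H)\le0\) with \(H=\bar B-e^{-u}\bar C\). For every \(t\ge0\), Markov's inequality applied to \(e^{-tsZ(H)}\), together with independence of the probes, gives
\[
\mathbb P\{L_s\le-u\}\le \bigl(\mathbb E e^{-t z^\top H z}\bigr)^s .
\]
We apply Lemma~\ref{app:single:moment} with \(U=\bar B\) (\(a=1\)) and \(V=e^{-u}\bar C\) (\(b=e^{-u}\)). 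For \(0\le t<e^u/2\) this yields
\[
\mathbb P\{L_s\le-u\}\le [(1+2t)(1-2te^{-u})]^{-s/2}.
\]

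\emph{Optimizing \(t\).} The function \(f(t)=(1+2t)(1-2te^{-u})\) is a concave quadratic in \(t\). Its maximizer solves \(2-2e^{-u}-8te^{-u}=0\), namely \(t=(e^u-1)/4\), which lies in the admissible range \([0,e^u/2)\). At this point
\[
1+2t=\frac{1+e^u}{2},\qquad 1-2te^{-u}=\frac{1+e^{-u}}{2},
\]
so \(f=(1+e^u)(1+e^{-u})/4=(2+e^u+e^{-u})/4=\cosh^2(u/2)\). Raising to the power \(-s/2\) gives \(\cosh(u/2)^{-s}=e^{-sI(u)}\), which is the first inequality in \eqref{app:single:ratio_tails}.

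\emph{Upper tail.} Interchanging the roles of \(B\) and \(C\) replaces \(L_s\) by \(-L_s\). The event \(L_s\ge u\) therefore becomes the lower-tail event for the swapped pair, and the same bound applies verbatim.

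The step needing care is the use of a common set of probes for both energies. It is handled entirely by Lemma~\ref{app:single:moment}: that lemma works with the single indefinite matrix \(H\), so no independence between \(Z(B)\) and \(Z(C)\) is invoked anywhere. The remaining checks are routine: the optimal \(t\) satisfies the lemma's constraint \(t<1/(2b)\), and the division by \(Z(\bar C)\) is justified because it is almost surely positive.
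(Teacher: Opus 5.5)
Your proposal is correct and follows essentially the same route as the paper: normalize both matrices, rewrite the lower tail as \(Z(\bar B-e^{-u}\bar C)\le0\), apply exponential Markov with independence across probes and Lemma~\ref{app:single:moment} (with \(a=1\), \(b=e^{-u}\)), optimize at \(t=(e^u-1)/4\) to get \(\cosh^2(u/2)\), and obtain the upper tail by swapping \(B\) and \(C\).
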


\begin{proof}
Write \(B_0=B/\operatorname{tr}(B)\),
\(C_0=C/\operatorname{tr}(C)\), and \(r=e^{-u}\in(0,1)\).
The lower-tail event is
\(\sum_{i=1}^s g_i^\top(B_0-rC_0)g_i\le0\).
For \(0<t<1/(2r)\), exponential Markov and independence across probes
give
\[
\mathbb P\{L_s\le-u\}
\le\prod_{i=1}^s
 \mathbb E e^{-t g_i^\top(B_0-rC_0)g_i}
\le[(1+2t)(1-2tr)]^{-s/2},
\]
where the second step uses Lemma~\ref{app:single:moment}. The
quadratic product is maximized at
\(t=(1-r)/(4r)=(e^u-1)/4\), which belongs to the allowed interval.
At this value,
\[
(1+2t)(1-2tr)=\frac{(1+r)^2}{4r}
             =\cosh^2(u/2).
\]
This proves the lower-tail bound. Interchanging \(B\) and \(C\)
changes \(L_s\) into \(-L_s\) and proves the other bound.
\end{proof}

\subsection{The Gaussian certificate}
\label{app:single:certificate}

For the supplied projection, set
\[
T=A^\top A,~~
E=A^\top(I_m-P)A,~~
C=A^\top PA=T-E.
\]
Orthogonality of \(P\) implies \(E,C\succeq0\). Cache
\(Y=[Ag_1,\ldots,Ag_s]\), so that
\[
\widehat q
=\frac{\|(I_m-P)Y\|_F^2}{\|Y\|_F^2}
=\frac{Z(E)}{Z(T)},~~
q=\frac{\operatorname{tr}(E)}{\operatorname{tr}(T)}.
\]
Although the residual and captured output vectors are orthogonal for
each probe, their squared norms need not be statistically independent.
Their common Gaussian inputs are handled by
Lemma~\ref{app:single:ratio_tail}.

For \(q\in(0,1)\), both \(E\) and \(C\) have positive trace, and
\[
\ell(\widehat q)-\ell(q)
=\log\frac{Z(E)}{Z(C)}
 -\log\frac{\operatorname{tr}(E)}{\operatorname{tr}(C)},
~~
\ell(x)=\log\frac{x}{1-x}.
\]
Consequently, both tails of this log-odds difference satisfy
\eqref{app:single:ratio_tails}. If \(q=0\), positive semidefiniteness
gives \(E=0\), and \(\widehat q=0\) almost surely. If \(q=1\), then
\(C=0\), and \(\widehat q=1\) almost surely.

Let
\[
\Delta=\ell(\rho)-\ell(\rho-g)>0,~~
\mu=\frac{\ell(\rho)+\ell(\rho-g)}2.
\]
The Gaussian certificate accepts exactly when
\begin{equation}
\sum_{i=1}^s\|(I_m-P)Ag_i\|_2^2
\le e^\mu\sum_{i=1}^s\|PAg_i\|_2^2.
\label{app:single:midpoint_rule}
\end{equation}
This formulation also covers \(q=0,1\) without taking logarithms
at an endpoint. For an interior null instance \(q\ge\rho\), acceptance
implies
\(\ell(\widehat q)-\ell(q)\le-\Delta/2\).
For an interior alternative \(q\le\rho-g\), rejection implies
\(\ell(\widehat q)-\ell(q)>\Delta/2\). Hence both errors are at most
\[
e^{-sI(\Delta/2)}
=\exp[-s\log\cosh(\Delta/4)].
\]
The endpoint instances are classified correctly almost surely. This
proves the upper bound uniformly over the dimensions, matrix, and
projection.

\subsection{The exact F tail and its prefactor}
\label{app:single:f_tail}

Let \(F_{s,s}\) be the ratio of two independent \(\chi_s^2\) random
variables, and write \(p_s(u)=\mathbb P\{F_{s,s}\le e^{-u}\}\) for
\(u>0\). The division by the common degrees of freedom cancels, so
this is also the usual \(F\) distribution with degrees of freedom
\((s,s)\).

\begin{lemma}[Exact density and endpoint asymptotics]
\label{app:single:f_asymptotics}
The random variable \(\log F_{s,s}\) has density
\begin{equation}
f_s(z)=c_s e^{-sI(z)},~~
c_s=\frac{\Gamma((s+1)/2)}
          {2\sqrt{\pi}\,\Gamma(s/2)},~~
I(z)=\log\cosh(z/2).
\label{app:single:logf_density}
\end{equation}
In particular,
\begin{equation}
p_s(u)=c_s\int_u^\infty e^{-sI(t)}\,dt.
\label{app:single:f_integral}
\end{equation}
For every integer \(s\ge1\) and \(u>0\),
\begin{equation}
\frac{2c_s}{s}e^{-sI(u)}
\le p_s(u)
\le\frac{2c_s}{s\tanh(u/2)}e^{-sI(u)}.
\label{app:single:f_bounds}
\end{equation}
For each fixed \(u>0\),
\begin{equation}
p_s(u)\sim
\frac{e^{-sI(u)}}{\sqrt{2\pi s}\,\tanh(u/2)}
~\text{as }s\to\infty.
\label{app:single:f_prefactor}
\end{equation}
\end{lemma}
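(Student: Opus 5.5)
We will derive the density of \(\log F_{s,s}\) directly. Write \(F=X/Y\) with \(X,Y\) independent \(\chi^2_s\), each with density proportional to \(x^{s/2-1}e^{-x/2}\). The ratio has density
\[
f_F(x)=\frac{\Gamma(s)}{\Gamma(s/2)^2}\,\frac{x^{s/2-1}}{(1+x)^{s}},
\]
which follows by integrating out \(Y\) in the Beta-prime form. Substituting \(x=e^z\) multiplies by \(e^z\) and gives
\[
\frac{\Gamma(s)}{\Gamma(s/2)^2}\,\frac{e^{sz/2}}{(1+e^z)^s}
=\frac{\Gamma(s)}{\Gamma(s/2)^2}\,\bigl(2\cosh(z/2)\bigr)^{-s}.
\]
The Legendre duplication formula \(\Gamma(s)=2^{s-1}\Gamma(s/2)\Gamma((s+1)/2)/\sqrt\pi\) turns the constant \(2^{-s}\Gamma(s)/\Gamma(s/2)^2\) into \(c_s\) as stated, so \eqref{app:single:logf_density} holds. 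Since \(\log F_{s,s}\) is symmetric in law (swap \(X,Y\)), \(p_s(u)=\mathbb P\{\log F\le -u\}=\mathbb P\{\log F\ge u\}\), which is \eqref{app:single:f_integral}.

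For the two-sided bounds \eqref{app:single:f_bounds}, I would use the convexity of \(I\) with \(I'(t)=\tfrac12\tanh(t/2)\), which is increasing. For the upper bound, convexity gives \(I(t)\ge I(u)+I'(u)(t-u)\) for \(t\ge u\), and integrating the exponential yields \(e^{-sI(u)}/(sI'(u))=2e^{-sI(u)}/(s\tanh(u/2))\). For the lower bound I would use the elementary inequality \(I(t)\le I(u)+(t-u)/2\), since \(I'\le 1/2\); integrating gives \(\int_u^\infty e^{-sI(t)}dt\ge e^{-sI(u)}\cdot 2/s\). Multiplying both by \(c_s\) gives the claim.

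For the asymptotics \eqref{app:single:f_prefactor}, Stirling (or \(\Gamma(x+1/2)/\Gamma(x)\sim\sqrt x\)) gives \(c_s\sim\sqrt{s/2}/(2\sqrt\pi)=\sqrt{s}/(2\sqrt{2\pi})\). It then suffices to show \(s\,e^{sI(u)}\int_u^\infty e^{-sI(t)}dt\to 1/I'(u)=2/\tanh(u/2)\). Substituting \(t=u+y/s\) gives \(\int_0^\infty e^{-s[I(u+y/s)-I(u)]}dy\). The integrand converges pointwise to \(e^{-I'(u)y}\) and is dominated by \(e^{-I'(u)y}\) via the convexity bound already used, so dominated convergence finishes. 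Combining, \(p_s(u)\sim \frac{\sqrt s}{2\sqrt{2\pi}}\cdot\frac{2}{s\tanh(u/2)}e^{-sI(u)}\), which matches the stated form.

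The only delicate points are the duplication-formula bookkeeping for \(c_s\) and confirming the domination in the Laplace step; both are routine. I expect the constant identification to be the main place where errors could creep in, so I would check it at \(s=1\), where \(c_1=1/(2\pi)\) and \(\int (2\cosh(z/2))^{-1}dz=\pi\) normalizes correctly.
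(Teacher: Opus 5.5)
Your proposal is correct and follows the paper's proof essentially step for step. Both arguments use the beta-prime density of $F_{s,s}$ with the substitution $x=e^z$ and the duplication formula, the sandwich $I(u)+I'(u)(t-u)\le I(t)\le I(u)+\tfrac12(t-u)$ for the two-sided bounds, and the rescaling $t=u+y/s$ with dominated convergence and Stirling's formula for the sharp prefactor; the paper's change of variables $X=xw$, $W=w$ is the same computation you describe as integrating out $Y$.
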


\begin{proof}
Let \(X,W\) be independent \(\chi_s^2\) variables. Substitute
\(X=xw\), \(W=w\) into their joint gamma density. The Jacobian is
\(w\), and integration over \(w>0\) gives
\[
f_F(x)
=\frac{\Gamma(s)}{\Gamma(s/2)^2}
  \frac{x^{s/2-1}}{(1+x)^s},~~ x>0.
\]
With \(x=e^z\), the density of the logarithm is therefore
\[
f_s(z)
=\frac{\Gamma(s)}{2^s\Gamma(s/2)^2}\cosh(z/2)^{-s}.
\]
The duplication identity
\(\Gamma(s)=2^{s-1}\Gamma(s/2)\Gamma((s+1)/2)/\sqrt{\pi}\)
gives \eqref{app:single:logf_density}. Its symmetry implies
\(F_{s,s}^{-1}\overset d=F_{s,s}\) and
\eqref{app:single:f_integral}. The density is positive and continuous,
so \(p_s(u)>0\) and \(p_s\) is continuous for every finite \(u>0\).

For \(t\ge u>0\),
\(I'(t)=\tfrac12\tanh(t/2)\) is positive, nondecreasing, and at most
\(1/2\). Thus
\[
I(u)+I'(u)(t-u)\le I(t)\le I(u)+\tfrac12(t-u).
\]
Integrating the two exponential bounds proves
\eqref{app:single:f_bounds}.

For the sharp prefactor, set \(t=u+y/s\) in
\eqref{app:single:f_integral}:
\[
p_s(u)
=\frac{c_s}{s}e^{-sI(u)}
 \int_0^\infty
 \exp\{-s[I(u+y/s)-I(u)]\}\,dy.
\]
For each \(y\ge0\), the integrand tends to \(e^{-I'(u)y}\).
Convexity bounds it by this same integrable function for every \(s\).
Dominated convergence therefore gives the integral limit
\(1/I'(u)=2/\tanh(u/2)\). Stirling's formula gives
\[
\frac{\Gamma((s+1)/2)}{\Gamma(s/2)}
=\sqrt{s/2}\,[1+O(s^{-1})],
~~
c_s\sim\frac{\sqrt{s}}{2\sqrt{2\pi}}.
\]
Substitution proves \eqref{app:single:f_prefactor}.
\end{proof}

For two orthogonal rank-one energy forms, their normalized estimates
are independent \(\chi_s^2/s\) variables. Hence their log-ratio has
exactly the density above. It follows that \(I(u)\) is the sharp
exponential rate in \eqref{app:single:ratio_tails}. The general
finite-sample calibration remains the exponential bound; the exact
\(F\) tail is used below as a testing lower bound.

\subsection{Gaussian innovations under adaptive queries}
\label{app:single:adaptive}

\begin{lemma}[A covariance-independent transcript simulator]
\label{app:single:innovation}
Fix \(m,n,s\), and let \(\Sigma\in\mathbb R^{m\times m}\) be positive
definite. Suppose the columns of \(A\) are independent with law
\(\mathcal N(0,\Sigma/n)\). Consider an adaptive forward-query
procedure with at most \(s\) queries. Its supplied information is fixed
and independent of \(\Sigma\) and \(A\); its private random seed is
independent of \(A\) and has the same law for every \(\Sigma\).
Its complete transcript can be generated from the seed and \(s\)
independent vectors with law \(\mathcal N(0,\Sigma)\) through a
measurable map that does not depend on \(\Sigma\). The transcript
includes queries, responses, stopping, and the final output.
\end{lemma}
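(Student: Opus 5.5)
We would prove the lemma by induction on the query index, using the Gaussian innovation decomposition \eqref{eq:gaussian_innovation}. Write \(A=\Sigma^{1/2}X/\sqrt n\), where \(X\in\mathbb R^{m\times n}\) has i.i.d.\ standard normal entries. Let \(\omega\) be the seed and let \(\xi_1,\ldots,\xi_s\) be independent \(\mathcal N(0,\Sigma)\) vectors, independent of \(\omega\). The simulator maintains an orthonormal basis \(e_1,\ldots,e_k\) of the span of the queries made so far, together with the simulated values \(Ae_1,\ldots,Ae_k\). At step \(t\), the procedure outputs a query \(v_t\) as a measurable function of the supplied information, \(\omega\), and the past transcript. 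The simulator decomposes \(v_t=v_\parallel+v_\perp\) relative to the current span, which it can compute from the transcript. It returns
\[
Av_\parallel+\|v_\perp\|_2\,\xi_{k+1}/\sqrt n .
\]
Here \(Av_\parallel=\sum_j\langle v_t,e_j\rangle Ae_j\) is already known. If \(v_\perp\ne0\), the simulator appends \(e_{k+1}=v_\perp/\|v_\perp\|_2\) and sets \(Ae_{k+1}:=\xi_{k+1}/\sqrt n\). Stopping and the final output are then computed by the procedure's own rules. At most \(s\) innovations are consumed; unused ones are ignored. Gram--Schmidt, norms, and inner products are measurable, and none of these steps involves \(\Sigma\). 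Hence the map from \((\omega,\xi_1,\ldots,\xi_s)\) to the transcript is measurable and independent of \(\Sigma\).

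It remains to show that the simulated transcript has the same joint law as the real one. The key fact concerns the real process: conditionally on \(\omega\) and on the first \(k\) query--response pairs, \(Ae\) for a new unit vector \(e\) orthogonal to the past span is \(\mathcal N(0,\Sigma/n)\) and independent of the past. To prove it, fix any deterministic orthonormal sequence \(f_1,\ldots,f_n\). Rotation invariance of the rows of \(X\) gives that \(Af_1,\ldots,Af_n\) are i.i.d.\ \(\mathcal N(0,\Sigma/n)\). The subtlety is that the adaptive basis \(e_j\) depends on earlier responses. We handle this by a conditioning argument at each step. Given the \(\sigma\)-field \(\mathcal F_k\) generated by \(\omega\) and the past, the past responses determine \(AQ_k\), where \(Q_k=[e_1,\ldots,e_k]\) is \(\mathcal F_{k-1}\)-measurable. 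Write \(X=XQ_kQ_k^\top+X(I-Q_kQ_k^\top)\). By Gaussian rotation invariance applied conditionally, the conditional law of \(X(I-Q_kQ_k^\top)\) given \(\mathcal F_k\) is that of a standard Gaussian matrix restricted to the orthogonal complement. So for any \(\mathcal F_k\)-measurable unit vector \(e\perp\operatorname{range}Q_k\), \(Xe\sim\mathcal N(0,I_m)\) independent of \(\mathcal F_k\).

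We expect this measurability/conditional-invariance point to be the main obstacle. The cleanest route is to prove by induction on \(k\) the stronger statement that, conditionally on \(\mathcal F_k\), the matrix \(X(I-Q_kQ_k^\top)\) is distributed as \(G(I-Q_kQ_k^\top)\) with \(G\) a fresh standard Gaussian matrix. The inductive step uses the facts that \(e_{k+1}\) is \(\mathcal F_k\)-measurable and that, for a fixed projector, the Gaussian components on orthogonal subspaces are independent.

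Given this fact, the joint law of the real transcript factorizes step by step into the seed law, the deterministic procedure kernels, and independent \(\mathcal N(0,\Sigma/n)\) innovations scaled by \(\|v_\perp\|_2\). This is exactly the simulated law, so the two transcripts agree in distribution by induction on \(t\), and the final output, being a function of the transcript, agrees as well.
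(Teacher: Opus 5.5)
Your proposal is correct and follows essentially the paper's argument. The paper likewise proves by induction that, conditionally on the seed and history, $A$ equals its known action on the query span plus $n^{-1/2}\Sigma^{1/2}Z\Pi_t$ with $Z$ a fresh standard Gaussian matrix, and it uses the same simulator: it returns $M_tv$ for redundant queries and $M_tv+\|\Pi_tv\|_2\xi_j/\sqrt n$ for new directions. The differences are cosmetic: the paper tracks the known action via $M_t=Y_tV_t^\dagger$ and $\Pi_t=I_n-V_tV_t^\dagger$, whereas you use an explicit Gram--Schmidt basis and the whitened matrix $X$.
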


\begin{proof}
Draw all private randomness as a seed \(R\) before the first query;
it is independent of \(A\). The query rule is then deterministic
conditional on \(R\) and the observed history. After \(t\) queries
define
\[
V_t=[v_1,\ldots,v_t],~~
Y_t=[Av_1,\ldots,Av_t],~~
M_t=Y_tV_t^\dagger,~~
\Pi_t=I_n-V_tV_t^\dagger,
\]
where \(\dagger\) denotes the Moore--Penrose pseudoinverse.
The matrix \(\Pi_t\) projects onto the orthogonal complement of the
previous query span. Since \(Y_t=AV_t\), one has
\(M_t=A(I_n-\Pi_t)\), so \(M_t\) is the known action on that span.
Set \(M_0=0\), \(\Pi_0=I_n\), and let
\(\mathcal F_t=\sigma(R,V_t,Y_t)\).

We prove inductively that, under each fixed covariance, the regular
conditional law is
\begin{equation}
\mathcal L(A\mid\mathcal F_t)
=\mathcal L\left(
 M_t+\frac1{\sqrt n}\Sigma^{1/2}Z\Pi_t
 \,\middle|\,\mathcal F_t\right),
\label{app:single:conditional_law}
\end{equation}
where the entries of the auxiliary matrix \(Z\in\mathbb R^{m\times n}\)
are independent standard normals, independent of \(\mathcal F_t\).
The equality denotes conditional distributions, rather than a single
auxiliary matrix shared across different values of \(t\). At \(t=0\),
it is the assumed matrix prior.

Conditional on \(\mathcal F_t\), the next query \(v\) is fixed.
If \(d=\|\Pi_tv\|_2=0\), then \(Av=M_tv\) is already determined.
This query supplies no additional condition on the residual law,
and \eqref{app:single:conditional_law} remains valid.

Otherwise put \(u=\Pi_tv/d\). Under the conditional representation,
\[
A v=M_tv+\frac d{\sqrt n}\xi,
~~
\xi=\Sigma^{1/2}Zu\sim\mathcal N(0,\Sigma).
\]
The vector \(\xi\) and the matrix
\(\Sigma^{1/2}Z(\Pi_t-uu^\top)\) are jointly Gaussian and independent.
To verify this, for every vector \(w\) in the range of
\(\Pi_t-uu^\top\), their component covariances are determined by
\(\operatorname{Cov}(\Sigma^{1/2}Zu,\Sigma^{1/2}Zw)
=\Sigma(u^\top w)=0\).
Thus observing the new response reveals \(\xi\) and leaves the
orthogonal Gaussian component unchanged. The posterior mean and
residual projection become
\begin{equation}
M_{t+1}=M_t+\frac1{\sqrt n}\xi u^\top,
~~
\Pi_{t+1}=\Pi_t-uu^\top.
\label{app:single:innovation_update}
\end{equation}
These are precisely the known action on the enlarged query span and
its orthogonal complement. The independent remaining Gaussian
component proves \eqref{app:single:conditional_law} at step \(t+1\).
Adaptive selection imposes no additional constraint here: conditional
on the current history and seed, the selected query is a known vector.

A simulator now draws \(\xi_1,\ldots,\xi_s\) independently from
\(\mathcal N(0,\Sigma)\), together with \(R\). It follows the
procedure's query rule, returns \(M_tv\) for a redundant query, and
for a new direction returns
\(M_tv+d\xi_j/\sqrt n\), using the next unused vector and updating
the known action by \eqref{app:single:innovation_update}. Early
stopping leaves unused vectors. The induction proves equality in
law of the full transcripts. Every step of the simulation uses only
the supplied information, the seed, the generated history, and the
input vectors \(\xi_j\); it never uses \(\Sigma\).
\end{proof}

In particular, if \(\Sigma\) is one of finitely many unknown
covariances, this lemma gives a single hypothesis-independent
simulation kernel. Any decision from the transcript is consequently
a decision based on at most \(s\) independent covariance observations
and an independent seed. It cannot have smaller Bayes error than the
optimal test based on those observations. This is the full-vector
information reduction underlying \eqref{eq:gaussian_innovation};
it does not restrict the original procedure to scalar energy
statistics.

\subsection{Two covariance hypotheses and finite-dimensional leakage}
\label{app:single:lower}

\begin{proposition}[Finite-dimensional lower bound]
\label{app:single:finite_lower}
Fix \(m=2\), \(P=\operatorname{diag}(0,1)\), and integers \(n\ge s\ge1\).
Every procedure using at most \(s\) adaptive queries satisfies, for
each \(\eta>0\),
\begin{equation}
\max\{\alpha(\mathcal M_{2,n,P}),\beta(\mathcal M_{2,n,P})\}
\ge
\left[
\frac{p_s(\Delta/2+\eta)-p_n(\eta)}
     {1-p_n(\eta)}
\right]_+,
\label{app:single:finite_lower_bound}
\end{equation}
where \([x]_+=\max\{x,0\}\) and
\(\Delta=\ell(\rho)-\ell(\rho-g)\).
\end{proposition}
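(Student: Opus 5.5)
The plan is a Bayesian two-prior argument. Let \(m=2\) and \(P=\operatorname{diag}(0,1)\), so that \(q(A,P)=\|A_{1\cdot}\|^2/\|A\|_F^2\), where the first row of \(A\) is its residual part. Write \(c_h=\ell(\rho)+\eta\) for \(h=0\) and \(c_h=\ell(\rho-g)-\eta\) for \(h=1\). Under hypothesis \(h\), take \(\Sigma_h=\operatorname{diag}(e^{c_h/2},e^{-c_h/2})\), which has determinant one, and draw the columns of \(A\) i.i.d.\ from \(\mathcal N(0,\Sigma_h/n)\). Then \(\|A_{1\cdot}\|^2\) and \(\|A_{2\cdot}\|^2\) are independent, scaled \(\chi^2_n/n\) variables. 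It follows that
\[
\ell(q(A,P))=c_h+\log F_{n,n}.
\]
A draw lands outside its intended region \(\mathcal H_h(P)\) exactly when \(\log F_{n,n}\) deviates from zero by more than \(\eta\) in the wrong direction. By the symmetry of \(F_{n,n}\), each hypothesis therefore leaks with probability exactly \(p_n(\eta)\), using the definition of \(p_n\) from Lemma~\ref{app:single:f_asymptotics}.

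Next, bound the Bayes error of any procedure under the equal-prior mixture. By Lemma~\ref{app:single:innovation}, the transcript is a \(\Sigma\)-independent measurable function of the seed and \(s\) i.i.d.\ draws \(\xi_j\sim\mathcal N(0,\Sigma_h)\). Hence the average error \(\tfrac12[\mathbb P_0\{\mathrm{accept}\}+\mathbb P_1\{\mathrm{reject}\}]\) is at least the optimal equal-prior Bayes error from \(s\) observations. The likelihood ratio test thresholds the statistic
\[
\log\frac{\sum_j\xi_{j1}^2}{\sum_j\xi_{j2}^2}.
\]
Under \(h\), this statistic equals \(c_h+\log F_{s,s}\), and the midpoint threshold lies at distance \((c_0-c_1)/2=\Delta/2+\eta\) from each \(c_h\). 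Equal determinants make the log-likelihood ratio affine in this statistic with the correct sign, and the equal-variance symmetric setup places the threshold at the midpoint. Each error is therefore \(p_s(\Delta/2+\eta)\), so the Bayes error is at least \(p_s(\Delta/2+\eta)\). The lemma requires the hypothesis \(n\ge s\) only to ensure that enough innovations exist; the lemma itself handles fewer effective queries.

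To transfer to worst-case risk, let \(\epsilon=\max\{\alpha,\beta\}\). Write \(\mathbb P_h\{\mathrm{error}\}=\mathbb E_h[\mathbb P_A\{\mathrm{error}\}]\), and split according to whether \(A\in\mathcal H_h(P)\). On \(\mathcal H_h(P)\), the conditional error is at most \(\epsilon\); off it, the error is at most \(1\). This gives
\[
\mathbb P_h\{\mathrm{error}\}\le(1-p_n(\eta))\epsilon+p_n(\eta).
\]
Averaging over \(h\) and combining with the Bayes lower bound yields
\[
p_s(\Delta/2+\eta)\le(1-p_n(\eta))\epsilon+p_n(\eta).
\]
Rearranging and taking the positive part gives the claimed bound \eqref{app:single:finite_lower_bound}.

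The main obstacle is checking that the likelihood-ratio test is exactly this symmetric threshold test, so that its error equals \(p_s(\Delta/2+\eta)\) under both hypotheses. A secondary point is the measurability of conditioning on the matrix draw when averaging over the prior.
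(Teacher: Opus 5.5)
Your proposal is correct and follows the paper's proof essentially step for step: the same determinant-one priors \(\Sigma_h=\operatorname{diag}(e^{c_h/2},e^{-c_h/2})\) with exact \(F_{n,n}\) leakage \(p_n(\eta)\), the reduction through Lemma~\ref{app:single:innovation} to \(s\) covariance observations whose equal-prior Bayes test thresholds \(S_1/S_2\) (with \(S_k=\sum_j\xi_{jk}^2\)) at \(e^{(c_0+c_1)/2}\) and errs with probability \(p_s(\Delta/2+\eta)\) under each hypothesis, and the same bound \((1-p_n(\eta))\epsilon+p_n(\eta)\) on each prior-averaged error. One wording fix: the log-likelihood ratio is linear in \((S_1,S_2)\), namely \(\sinh(\omega/2)(e^{-\mu/2}S_1-e^{\mu/2}S_2)\) with \(\mu=(c_0+c_1)/2\) and \(\omega=(c_0-c_1)/2\), not affine in \(\log(S_1/S_2)\); only its sign depends on the ratio alone, which is all the equal-prior test needs, and your final rearrangement uses \(1-p_n(\eta)>0\), which holds because \(p_n(\eta)<1/2\).
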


\begin{proof}
Set
\[
\lambda_0=\ell(\rho)+\eta,~~
\lambda_1=\ell(\rho-g)-\eta,~~
\Sigma_h=\operatorname{diag}
           (e^{\lambda_h/2},e^{-\lambda_h/2}).
\]
Under prior \(\mathcal Q_h\), draw the columns of \(A\) independently
from \(\mathcal N(0,\Sigma_h/n)\). The supplied dimensions and
projection are identical under the two priors. Applying
Lemma~\ref{app:single:innovation} under both hypotheses gives the
same simulation map from \(s\) independent vectors
\(\xi_i\sim\mathcal N(0,\Sigma_h)\).

We first compute the optimal equal-prior error for these vectors.
Write
\[
\mu=\frac{\lambda_0+\lambda_1}{2},~~
\omega=\frac{\lambda_0-\lambda_1}{2}=\Delta/2+\eta,~~
S_1=\sum_{i=1}^s\xi_{i1}^2,~~
S_2=\sum_{i=1}^s\xi_{i2}^2.
\]
Both covariances have determinant one. If \(f_h\) is the joint
density of the observations under hypothesis \(h\), direct
substitution into the Gaussian densities gives
\begin{align*}
\log\frac{f_0}{f_1}
&=-\frac12\left[
(e^{-\lambda_0/2}-e^{-\lambda_1/2})S_1
+(e^{\lambda_0/2}-e^{\lambda_1/2})S_2
\right]\\
&=\sinh(\omega/2)
  \left(e^{-\mu/2}S_1-e^{\mu/2}S_2\right).
\end{align*}
Under equal prior probabilities, selecting the larger density
minimizes the average error pointwise. The test therefore chooses
hypothesis \(1\), corresponding to acceptance, when
\(S_1/S_2\le e^\mu\). Ties have probability zero. Since the
coordinates are independent under either covariance,
\[
\frac{S_1}{S_2}\overset d=e^{\lambda_h}F_{s,s}.
\]
The error under hypothesis \(0\) is
\(\mathbb P\{F_{s,s}\le e^{-\omega}\}=p_s(\omega)\).
The error under hypothesis \(1\) is
\(\mathbb P\{F_{s,s}>e^\omega\}=p_s(\omega)\), by reciprocal
symmetry and continuity. Thus the optimal average error equals
\begin{equation}
p_s(\Delta/2+\eta).
\label{app:single:bayes_error}
\end{equation}
An independent private seed cannot improve this value: conditional
on each seed, the same pointwise density comparison remains optimal.
The transcript simulation therefore bounds the prior-averaged error
of every oracle procedure below by
\eqref{app:single:bayes_error}.

The Gaussian matrix priors have not been conditioned to lie in the
certification regions. Under \(\mathcal Q_h\), their realized error
odds satisfy
\[
\frac{q(A,P)}{1-q(A,P)}
=\frac{\|A_{1,:}\|_2^2}{\|A_{2,:}\|_2^2}
\overset d=e^{\lambda_h}F_{n,n}.
\]
Consequently, the two leakage probabilities are exactly
\begin{equation}
\mathcal Q_0\{q<\rho\}=p_n(\eta),
~~
\mathcal Q_1\{q>\rho-g\}=p_n(\eta).
\label{app:single:leakage}
\end{equation}
Let \(\varepsilon=\max\{\alpha,\beta\}\) be the procedure's
worst error on the required regions for this dimension and
projection. Its error for each fixed matrix is at most
\(\varepsilon\) inside the intended region and at most one
outside. Each prior-averaged error is therefore at most
\((1-p_n(\eta))\varepsilon+p_n(\eta)\). Combining this with
\eqref{app:single:bayes_error}, rearranging, and using
\(\varepsilon\ge0\) proves \eqref{app:single:finite_lower_bound}.
The denominator is positive because \(p_n(\eta)<1/2\).
\end{proof}

Keeping the priors Gaussian until after the simulation is essential
to this argument. Conditioning them on the certification regions
at the outset would change their Gaussian conditional laws.
Equation~\eqref{app:single:leakage} instead quantifies the discrepancy
after the information reduction has been applied.

\subsection{Minimax risk, confidence asymptotics, and the small gap}
\label{app:single:minimax}

\begin{proof}[Proof of Theorem~\ref{thm:single_certification}]
The certificate \eqref{app:single:midpoint_rule} gives the upper
bound in every dimension. For the lower bound, fix \(s\ge1\) and
\(\eta>0\). Proposition~\ref{app:single:finite_lower} applies
uniformly to every procedure in each dimension \(n\ge s\).
Lemma~\ref{app:single:f_asymptotics} gives \(p_n(\eta)\to0\) as
\(n\to\infty\). Taking the supremum over input dimensions in
\eqref{app:single:risk}, followed by the infimum over procedure
families, therefore yields
\[
R_s(\rho,g)\ge p_s(\Delta/2+\eta).
\]
Continuity of \(p_s\) permits \(\eta\downarrow0\). We obtain
\begin{equation}
p_s(\Delta/2)\le R_s(\rho,g)
\le e^{-sJ},~~
J=\log\cosh(\Delta/4)>0.
\label{app:single:sandwich}
\end{equation}

Put \(L=\log(1/\delta)\). The constructive upper bound gives
\(s^\star(\delta;\rho,g)\le\lceil L/J\rceil\).
The lower bound is positive at every fixed \(s\), so
\(s^\star(\delta;\rho,g)\to\infty\) as \(\delta\downarrow0\).
If \(R_s(\rho,g)\le\delta\), the lower estimate in
\eqref{app:single:f_bounds} implies
\[
sJ
\ge L+\log\frac{2c_s}{s}
=L-\frac12\log s-\frac12\log(2\pi)+O(s^{-1}).
\]
At \(s=s^\star\), the constructive upper bound implies
\(\log s^\star=O(\log L)=o(L)\). Dividing by \(L\) and combining
the upper and lower inequalities proves
\[
\lim_{\delta\downarrow0}
\frac{s^\star(\delta;\rho,g)}{\log(1/\delta)}
=\frac1{\log\cosh(\Delta/4)}.
\]
\end{proof}

For a fixed interior target \(\rho\), Taylor expansion gives
\[
\Delta
=\frac{g}{\rho(1-\rho)}+O_\rho(g^2),
~~
\log\cosh(\Delta/4)
=\frac{\Delta^2}{32}+O(\Delta^4).
\]
The precise small-gap consequence is the iterated limit
\begin{equation*}
\lim_{g\downarrow0}g^2
\left[
\lim_{\delta\downarrow0}
\frac{s^\star(\delta;\rho,g)}{\log(1/\delta)}
\right]
=32\rho^2(1-\rho)^2.
\end{equation*}
Thus the expression
\(32\rho^2(1-\rho)^2g^{-2}\log(1/\delta)\) describes the leading
cost when the confidence limit is taken before the gap limit.
No uniform assertion over arbitrary joint limits of \(g,\delta\)
is needed for this conclusion.

\begin{remark}[Why the dimension supremum matters]
\label{app:single:fixed_dimension}
For a fixed input dimension \(n\), querying all \(n\) coordinate
vectors recovers \(A\) exactly. With uncharged arithmetic, every
supplied projection can then be certified without statistical
error. The positive risk lower bound
\eqref{app:single:sandwich} consequently concerns a guarantee
uniform over input dimensions. In its proof, \(n\to\infty\) is
taken at fixed \(s,\eta\), and only afterward is
\(\eta\downarrow0\) taken. It is not a positive lower bound for
each fixed \(n\) once \(s\ge n\).
\end{remark}

\subsection{Mean-square output error}
\label{app:single:functional}

\begin{proof}[Proof of Corollary~\ref{cor:output_certification}]
Let \(K=\mathbb E[xx^\top]\succeq0\) be known and finite, and assume
\(\operatorname{tr}(AKA^\top)>0\). For every fixed matrix \(M\),
linearity of expectation gives
\[
\mathbb E\|Mx\|_2^2
=\mathbb E\operatorname{tr}(Mxx^\top M^\top)
=\operatorname{tr}(MKM^\top)
=\|MK^{1/2}\|_F^2.
\]
Taking \(M=(I_m-P)A\) and \(M=A\) yields, with \(B=AK^{1/2}\ne0\),
\begin{equation}
q_K(A,P)
=\frac{\mathbb E\|(I_m-P)Ax\|_2^2}
       {\mathbb E\|Ax\|_2^2}
=\frac{\|(I_m-P)B\|_F^2}{\|B\|_F^2}.
\label{app:single:functional_identity}
\end{equation}
The expectation is over the input distribution, whereas the
certification error probability is over the procedure's probes.
In particular, \eqref{app:single:functional_identity} is a ratio
of expectations, rather than a pointwise relative-error guarantee.

Every forward query to \(B\) can be implemented as
\(Bv=A(K^{1/2}v)\) with one forward query to \(A\). Computing with
the known \(K\) is part of the uncharged arithmetic in this query
model. Therefore any Frobenius certification procedure can be
simulated for the functional problem with the same hard budget.
In particular, Gaussian queries \(v=g_i\) to \(B\) become
\(K^{1/2}g_i\) queries to \(A\), proving the upper bounds for
every given \(K\).

For the minimax statement, define the functional risk by the same
suprema as in \eqref{app:single:risk}, with an additional supremum
over known positive semidefinite \(K\) satisfying the positive
denominator condition. The procedure receives \(K\). The query
simulation shows that this risk is no larger than the Frobenius
risk: apply procedures approaching its infimum to \(B\) and let
their excess risk tend to zero. Conversely, restricting the
functional instances to \(K=I_n\) recovers exactly the Frobenius
problem with the same supplied information and query model.
The functional risk is therefore no smaller than
\(R_s(\rho,g)\), proving equality and the common high-confidence
constant. No Gaussian assumption on the actual inputs \(x\)
is used.
\end{proof}

\begin{remark}[A prescribed second moment]
\label{app:single:fixed_second_moment}
The minimax equality above takes the worst case over known second
moments, not over matrices for one fixed second moment.
For example, if a prescribed \(K\) has rank \(r\), choose a known
orthonormal basis \(w_1,\ldots,w_r\) of its range. The \(r\) queries
\(Aw_1,\ldots,Aw_r\) determine \(AK^{1/2}\) exactly, and hence
determine \eqref{app:single:functional_identity} for every known
projection. A rank-one second moment can thus be certified exactly
with one query. This is consistent with the uniform result,
whose instance class contains \(K=I_n\) for arbitrarily large \(n\).
\end{remark}

\section{Proofs for Validation Reuse and the Shared-Energy Upper Bound}
\label{app:reuse-upper}

This appendix proves the pathwise reductions and the upper bound in
Theorem~\ref{thm:simultaneous_certification}. The matching information
lower bound is established in Appendix~\ref{app:reuse-lower}.
All matrices below are real. For a nonzero positive semidefinite matrix
$T$ and residual energies $0\preceq E\preceq T$, write
\begin{equation*}
Z(B)=\frac1s\sum_{j=1}^s g_j^\top B g_j,~~
q_E=\frac{\operatorname{tr}E}{\operatorname{tr}T},~~
\widehat q_E=\frac{Z(E)}{Z(T)},
~ g_j\stackrel{\mathrm{iid}}{\sim}N(0,I_n).
\end{equation*}
The definition of $Z$ applies to any symmetric matrix. Since $T\ne0$,
$Z(T)>0$ almost surely. Both $q_E$ and $\widehat q_E$ belong to $[0,1]$.
The notation $\|\cdot\|_*$ denotes the nuclear norm.

The complete candidate family and $T$ are independent of the validation
probes. They may depend on the operator and on independent construction
randomness. All statements can first be proved conditionally on these
quantities and then averaged. Throughout the pathwise arguments,
$\rho\in(0,1)$ and $g\in(0,\rho)$ are fixed before observing the sketch.
A required decision is rejection at $q_E\ge\rho$ and acceptance at
$q_E\le\rho-g$; no decision is prescribed between these regions.

\subsection{Two boundary candidates per chain}
\label{app:reuse:boundaries}

\begin{lemma}[Finite-family boundary reduction]
\label{app:reuse:boundary-lemma}
Let a finite candidate family be covered by $W$ Loewner chains, and
apply the same threshold rule $\widehat q_E\le\theta$ to every candidate,
where $\theta$ is fixed independently of the sketch. There is a set of
at most $2W$ candidates, independent of the sketch, such that every
required decision is correct if and only if every required decision
at these candidates is correct.
\end{lemma}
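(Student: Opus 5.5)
The proof rests on one monotonicity fact: within each chain, both the true errors and the estimates are ordered. Then the required decisions in each chain reduce to two extreme members.

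\textbf{Step 1: monotonicity.} If $E\preceq E'$ in the Loewner order, then $\operatorname{tr}E\le\operatorname{tr}E'$, so $q_E\le q_{E'}$. Also $g_j^\top E g_j\le g_j^\top E' g_j$ for every probe, hence $Z(E)\le Z(E')$. Since $Z(T)>0$ almost surely, this gives $\widehat q_E\le\widehat q_{E'}$ surely, not merely in law. A common realization of the probes is used for all candidates, so this ordering holds simultaneously along every chain.

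\textbf{Step 2: choice of boundary candidates.} Fix one chain $\mathcal C_w$ of the cover, $w=1,\dots,W$. The chains are deterministic given the family, which is independent of the sketch. Let $\mathcal U_w=\{E\in\mathcal C_w:q_E\ge\rho\}$ be its unsafe part. If $\mathcal U_w$ is nonempty, it has a Loewner-minimal element $E^-_w$, because the chain is finite and totally ordered. Likewise let $\mathcal S_w=\{E\in\mathcal C_w:q_E\le\rho-g\}$ be its safe part; if nonempty, it has a maximal element $E^+_w$. Both selections depend only on the true traces, so the resulting set of at most $2W$ candidates is independent of the sketch.

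One subtlety is that distinct candidates in a chain might share a residual matrix, or a candidate might lie in several chains. This causes no difficulty: candidates with equal $E$ have identical $q$ and $\widehat q$, so we may work with the residual matrices and then pick one representative candidate for each.

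\textbf{Step 3: the equivalence.} The ``only if'' direction is trivial, since the boundary candidates are themselves members of the family.

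For ``if'', first take any $E\in\mathcal U_w$. Then $E^-_w\preceq E$, so $\widehat q_{E^-_w}\le\widehat q_E$. If the boundary decision is correct, then $\widehat q_{E^-_w}>\theta$, which forces $\widehat q_E>\theta$. So $E$ is correctly rejected.

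Symmetrically, take any $E\in\mathcal S_w$. Then $E\preceq E^+_w$, so $\widehat q_E\le\widehat q_{E^+_w}\le\theta$, and $E$ is correctly accepted.

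Every candidate lies in some chain, and candidates in neither region impose no requirement. Hence every required decision in the family is correct.

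I expect no real obstacle here. The only point needing care is that the threshold $\theta$ is common to all candidates and fixed independently of the sketch; this is exactly what lets the same threshold comparison propagate along each chain.
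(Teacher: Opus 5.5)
Your proposal is correct and follows the paper's proof essentially step for step. In each chain you take the Loewner-minimal member with $q_E\ge\rho$ and the Loewner-maximal member with $q_E\le\rho-g$, and you use $E\preceq E'\Rightarrow Z(E)\le Z(E')$ with the common denominator $Z(T)$ to carry the boundary decisions along the chain; your direct argument is the contrapositive of the paper's (``an accepted bad member forces the bad boundary to be accepted''), and your remark on duplicate matrices matches the paper's note that repeated boundaries can be removed.
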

\begin{proof}
Fix a chain. If it contains any member with $q_E\ge\rho$, choose a
Loewner-minimal such member $E^{\mathrm{bad}}$. If it contains any member
with $q_E\le\rho-g$, choose a Loewner-maximal such member
$E^{\mathrm{good}}$. These extrema exist because the chain is finite.
They depend only on the fixed matrices and the true trace ratios.
Their identities need not be known to the procedure.

If a bad member $E$ is accepted, then
$E^{\mathrm{bad}}\preceq E$, hence
$Z(E^{\mathrm{bad}})\le Z(E)$ and
$\widehat q_{E^{\mathrm{bad}}}\le\widehat q_E\le\theta$.
Thus the bad boundary is also accepted. Conversely, an accepted bad
boundary is itself a false acceptance. Likewise, if a good member $E$
is rejected, then $E\preceq E^{\mathrm{good}}$, giving
$\widehat q_{E^{\mathrm{good}}}\ge\widehat q_E>\theta$;
the good boundary is rejected as well. The converse again holds because
the boundary is a member of the good set.
Collecting the boundaries of the $W$ chains proves the claim. Empty
bad or good sets contribute no boundary, and repeated boundaries can
be removed.
\end{proof}

On a decreasing path, these are the last unsafe member of its prefix
and the first sufficiently accurate member of its suffix. If only false
acceptance is controlled, one boundary per chain suffices. Loewner
ordering is essential to the argument: a decrease in the trace alone
does not imply a decrease in $Z(E)$ for each realized sketch.

For a countable family, choose increasing finite subsets whose union
is the whole family. Their intersections with the original chain cover
still have width at most $W$. If an error bound with a fixed budget and
threshold is uniform in the finite-subset size, it extends to the whole
family: the events that some required decision in a finite subset is
wrong increase to the event that some required decision in the full
family is wrong. Continuity of probability from below therefore gives
exactly the same bound. This argument neither requires global boundary
extrema to exist nor introduces a sum over subset sizes.

\subsection{The exact confidence constant for a fixed number of paths}
\label{app:reuse:fixed-width-confidence}

Let
\[
\ell(x)=\log\frac{x}{1-x},~~
\Delta=\ell(\rho)-\ell(\rho-g)>0,~~
\theta_{\log}
=\left[1+\exp\!\left(-\frac{\ell(\rho)+\ell(\rho-g)}2\right)\right]^{-1}.
\]
This is the midpoint in log-odds coordinates. It generally differs
from the arithmetic midpoint $\rho-g/2$ used below.

\begin{proposition}[Fixed-width high-confidence calibration]
\label{app:reuse:fixed-width-proposition}
For any finite or countable family of width at most $W$, accepting when
$\widehat q_E\le\theta_{\log}$ gives
\begin{equation}
\mathbb P\{\text{at least one required decision is incorrect}\}
\le2W\exp\{-s\log\cosh(\Delta/4)\}.
\label{app:reuse:fixed-width-risk}
\end{equation}
Consequently, for fixed $W,\rho,g$,
\begin{equation}
\lim_{\delta\downarrow0}
\frac{s^\star_{\mathrm{sim}}(\delta;\rho,g,W)}{\log(1/\delta)}
=\frac1{\log\cosh(\Delta/4)}.
\label{app:reuse:fixed-width-limit}
\end{equation}
\end{proposition}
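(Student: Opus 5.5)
The plan has three steps: reduce to at most $2W$ boundary candidates, control each by the single-candidate log-odds tail bound, and then sandwich $s^\star_{\mathrm{sim}}$ between this upper bound and the single-candidate lower bound. First take a finite family. Lemma~\ref{app:reuse:boundary-lemma} applies with the sketch-independent threshold $\theta=\theta_{\log}$. It yields a set of at most $2W$ boundary candidates, fixed independently of $G$, such that all required decisions are correct exactly when the boundary decisions are correct.

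For each boundary the decision $\widehat q_E\le\theta_{\log}$ is exactly the log-odds midpoint rule \eqref{app:single:midpoint_rule} applied with $E$ and $C=T-E$. This holds because $\ell$ is increasing and $\ell(\theta_{\log})=(\ell(\rho)+\ell(\rho-g))/2$. Lemma~\ref{app:single:ratio_tail} and the argument of Appendix~\ref{app:single:certificate} then bound each boundary's error probability by $\exp\{-s\log\cosh(\Delta/4)\}$. Endpoint cases $q_E\in\{0,1\}$ are classified correctly almost surely. A union bound over the $\le 2W$ boundaries gives \eqref{app:reuse:fixed-width-risk} for finite families. The bound is uniform in the subset size, so the countable-family extension through increasing finite subsets and continuity from below, described after Lemma~\ref{app:reuse:boundary-lemma}, transfers it to the full family. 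Everything is proved conditionally on the independent family and $T$ and then averaged.

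For the limit, the upper bound gives $s^\star_{\mathrm{sim}}\le\lceil(\log(1/\delta)+\log(2W))/J\rceil$ with $J=\log\cosh(\Delta/4)$. Dividing by $\log(1/\delta)$, the fixed term $\log(2W)$ vanishes in the limit, so the $\limsup$ is at most $1/J$.

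For the lower bound, note that a family consisting of a single candidate has width $1\le W$. Simultaneous certification of that family is single-candidate certification in the same query and accounting model, hence $s^\star_{\mathrm{sim}}(\delta;\rho,g,W)\ge s^\star(\delta;\rho,g)$. Theorem~\ref{thm:single_certification} then gives $\liminf \ge 1/J$.

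The only delicate point I expect is checking that the single-candidate lower bound really embeds: its hard instances, with $m=2$ and a fixed $P$, must be admissible width-one families independent of validation. This holds because the minimax risk there already ranges over all dimensions and matrices with a supplied projection.
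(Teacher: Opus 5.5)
Your proposal is correct and follows the paper's proof essentially step for step: the boundary reduction of Lemma~\ref{app:reuse:boundary-lemma}, the single-candidate log-odds tail bound at each of at most $2W$ boundaries with a union bound and finite-subset exhaustion for countable families, and then the sandwich between the budget $\lceil\log(2W/\delta)/\log\cosh(\Delta/4)\rceil$ and the singleton embedding of Theorem~\ref{thm:single_certification}. Your check that $\widehat q_E\le\theta_{\log}$ coincides with the midpoint rule \eqref{app:single:midpoint_rule} is only a repackaging of the paper's direct computation $\ell(\theta_{\log})-\ell(\rho)=-\Delta/2$ and $\ell(\theta_{\log})-\ell(\rho-g)=\Delta/2$.
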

\begin{proof}
For a fixed bad boundary with $0<q_E<1$, acceptance requires
\[
\ell(\widehat q_E)-\ell(q_E)
\le\ell(\theta_{\log})-\ell(\rho)=-\Delta/2.
\]
For a fixed good boundary with $0<q_E<1$, rejection requires
\[
\ell(\widehat q_E)-\ell(q_E)
>\ell(\theta_{\log})-\ell(\rho-g)=\Delta/2.
\]
The single-candidate Gaussian bound from
Section~\ref{sec:single_certification} controls each of these events by
$\exp\{-s\log\cosh(\Delta/4)\}$. At $q_E=0$ or $q_E=1$, the sampled
ratio equals the true ratio almost surely, so the corresponding error
probability is zero. Lemma~\ref{app:reuse:boundary-lemma} and a union
bound over at most $2W$ boundaries prove
\eqref{app:reuse:fixed-width-risk} for finite families. The finite-subset
argument above proves the countable case.

In particular, the integer budget
\[
\left\lceil
\frac{\log(2W/\delta)}{\log\cosh(\Delta/4)}
\right\rceil
\]
is sufficient. The single-candidate minimax lower bound of
Theorem~\ref{thm:single_certification} applies because singleton families
are included among families of width at most $W$. For fixed $W$, these
upper and lower bounds have the same leading coefficient of
$\log(1/\delta)$, proving \eqref{app:reuse:fixed-width-limit}.
\end{proof}

\subsection{Gaussian concentration facts}
\label{app:reuse:facts}

We first record the quadratic-form bounds used below, including their
normalizations.

\begin{lemma}[Gaussian quadratic forms]
\label{app:reuse:lemma-quadratic}
For a fixed real symmetric matrix $B$ and $u>0$,
\begin{equation}
\mathbb P\left\{
 |Z(B)-\operatorname{tr} B|>
 2\|B\|_F\sqrt{u/s}+2\|B\|_{\mathrm{op}}u/s
\right\}\le 2e^{-u}.
\label{app:reuse:quadratic-two-sided}
\end{equation}
If $B\succeq0$, its lower tail also satisfies
\begin{equation}
\mathbb P\left\{Z(B)<\operatorname{tr} B-2\|B\|_F\sqrt{u/s}\right\}
\le e^{-u}.
\label{app:reuse:quadratic-lower}
\end{equation}
\end{lemma}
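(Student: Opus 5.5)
The plan is to reduce to a sum of independent weighted chi-square deviations and apply the Laurent--Massart bound. Diagonalize \(B=Q\Lambda Q^\top\) with eigenvalues \(\lambda_1,\ldots,\lambda_n\). By rotation invariance, \(Q^\top g_j\) are again independent standard Gaussian vectors. Hence
\[
sZ(B)=\sum_{j=1}^s\sum_{k=1}^n\lambda_k\xi_{jk}^2 ,
\]
with \(sn\) independent standard normals \(\xi_{jk}\). This is a weighted chi-square sum whose weight vector \(a\in\mathbb R^{sn}\) consists of each \(\lambda_k\) repeated \(s\) times. It therefore satisfies
\[
\|a\|_2=\sqrt s\,\|B\|_F,~~\|a\|_\infty=\|B\|_{\mathrm{op}},~~\sum a=s\operatorname{tr}B .
\]

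For the lower tail with \(B\succeq0\), all weights are nonnegative. The Laurent--Massart lower bound gives
\[
\mathbb P\Bigl\{\textstyle\sum a_i\xi_i^2\le\sum a_i-2\|a\|_2\sqrt u\Bigr\}\le e^{-u}.
\]
Dividing by \(s\) turns \(2\sqrt s\|B\|_F\sqrt u/s\) into \(2\|B\|_F\sqrt{u/s}\), which is \eqref{app:reuse:quadratic-lower}. If one prefers to avoid citation, I would rederive it from the bound \(\log\mathbb Ee^{-t(X-1)a_i}\le a_i^2t^2\) for \(X\sim\chi^2_1\) and \(t\ge0\), then optimize Chernoff.

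For the two-sided bound on symmetric \(B\), split the weights into positive and negative parts and bound the centered log-moment generating function of each term. For \(0\le t<1/(2|\lambda|)\), the standard estimate is
\[
-\tfrac12\log(1-2t\lambda)-t\lambda\le\frac{t^2\lambda^2}{1-2t|\lambda|}
\]
for either sign of \(\lambda\). Summing over terms gives a sub-gamma bound with variance factor \(v=2\|a\|_2^2\) and scale \(c=2\|a\|_\infty\). The Bernstein-type inversion then yields deviation at most \(\sqrt{2vu}+cu=2\|a\|_2\sqrt u+2\|a\|_\infty u\) with probability at least \(1-e^{-u}\) per tail. Applying the same argument to \(-B\) gives the other tail, and the union bound produces the factor \(2\). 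Rescaling by \(s\) yields \(2\|B\|_F\sqrt{u/s}+2\|B\|_{\mathrm{op}}u/s\).

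The only delicate point is bookkeeping of constants in the sub-gamma inversion, that is, checking that the constant \(2\) in front of the \(\|B\|_{\mathrm{op}}\) term suffices. The Laurent--Massart upper tail gives \(2\|a\|_\infty u\) exactly for nonnegative weights. For mixed signs, applying the per-term bound above to the full sum directly, rather than splitting into positive and negative parts, preserves the constants \(v=2\|a\|_2^2\) and \(c=2\|a\|_\infty\).
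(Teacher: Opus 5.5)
Your proposal is correct and follows essentially the paper's route. The paper also sums the per-eigenvalue Gaussian MGF bound $-x-\log(1-x)\le x^2/[2(1-|x|)]$ into a sub-gamma estimate with variance factor $2\|B\|_F^2/s$ and scale $2\|B\|_{\mathrm{op}}/s$, your $v,c$ after dividing by $s$, and for $B\succeq0$ uses $x-\log(1+x)\le x^2/2$, which is exactly your rederivation of the Laurent--Massart lower tail. The differences are only cosmetic: you flatten to an $sn$-term weighted chi-square sum where the paper works with the eigenvalues of $B$ directly, and your opening remark about splitting positive and negative weights is unnecessary, as your last paragraph itself notes.
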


\begin{proof}
Let $\lambda_1,\ldots,\lambda_n$ be the eigenvalues of $B$.
For $0\le\theta<s/(2\|B\|_{\mathrm{op}})$, independence and the Gaussian
moment-generating function give
\[
\log\mathbb E\exp\{\theta(Z(B)-\operatorname{tr} B)\}
=\frac s2\sum_{k=1}^n
\left[-\frac{2\theta\lambda_k}{s}
      -\log\left(1-\frac{2\theta\lambda_k}{s}\right)\right].
\]
Using $-x-\log(1-x)\le x^2/[2(1-|x|)]$ for $|x|<1$ yields
\begin{equation*}
\log\mathbb E\exp\{\theta(Z(B)-\operatorname{tr} B)\}
\le
\frac{\theta^2\|B\|_F^2/s}
     {1-2\theta\|B\|_{\mathrm{op}}/s}.
\end{equation*}
The same inequality applies to $-B$.  The exponential Markov bound,
with
\[
\theta=
\frac{\sqrt u}
     {\|B\|_F/\sqrt s+2\|B\|_{\mathrm{op}}\sqrt u/s},
\]
gives each one-sided bound with threshold
$2\|B\|_F\sqrt{u/s}+2\|B\|_{\mathrm{op}}u/s$ and probability $e^{-u}$.
The case $B=0$ is immediate.  Summing the two tails proves
\eqref{app:reuse:quadratic-two-sided}.

When $B\succeq0$, the inequality
$x-\log(1+x)\le x^2/2$ for $x\ge0$ instead gives
\[
\log\mathbb E\exp\{-\theta(Z(B)-\operatorname{tr} B)\}
\le\theta^2\|B\|_F^2/s,
~~ \theta\ge0.
\]
Optimizing the exponential Markov bound proves
\eqref{app:reuse:quadratic-lower}.
\end{proof}

We also use the Gaussian singular-value inequalities in
\cite[Corollary~5.35 and its proof]{vershynin2012nonasymptotic}: if
$G_0\in\mathbb R^{h\times s}$ has independent standard Gaussian entries, then,
for every $t>0$,
\begin{align}
\mathbb P\{\|G_0\|_{\mathrm{op}}>\sqrt s+\sqrt h+\sqrt{2t}\}
&\le e^{-t},
\label{app:reuse:gaussian-upper}\\
\mathbb P\{\sigma_{\min}(G_0)<\sqrt s-\sqrt h-\sqrt{2t}\}
&\le e^{-t}~~ (s\ge h).
\label{app:reuse:gaussian-lower}
\end{align}
In \eqref{app:reuse:gaussian-lower}, $\sigma_{\min}$ is the smallest
of the $h$ singular values. The cited proof combines the Gaussian
comparison bound of Theorem~5.32 with the one-sided Gaussian
concentration inequality of Proposition~5.34, yielding the individual
tail probabilities displayed above. In the proof below the
lower bound is invoked only when its right-hand threshold is positive;
otherwise positive semidefiniteness alone controls the lower covariance
deviation.

\subsection{Uniform concentration under a shared energy envelope}
\label{app:reuse:proof}

\begin{proposition}[Uniform concentration for a finite family]
\label{app:reuse:uniform}
Let $M,s\ge1$ be integers and let $0<\alpha<1$. Suppose that
$T\succeq0$, $\operatorname{tr}T>0$, and $0\preceq E_i\preceq T$
for $1\le i\le M$, with all matrices independent of the Gaussian
validation probes. Set
\[
H_M(\alpha)=\sqrt{\log(2M)}+\log(5/\alpha).
\]
Then
\begin{equation*}
\mathbb P\left\{
\max_{i\le M}|\widehat q_i-q_i|
\le\min\{1,16\sqrt{H_M(\alpha)/s}\}\right\}\ge1-\alpha.
\end{equation*}
In particular, $s\ge256H_M(\alpha)/\epsilon^2$ guarantees simultaneous
additive accuracy $\epsilon$. This proves
\eqref{eq:shared_envelope_concentration} with the absolute constant
$C=16$.
\end{proposition}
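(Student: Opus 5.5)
Normalize \(\operatorname{tr}T=1\), center \(F_i=E_i-q_iT\), and write \(\widehat q_i-q_i=Z(F_i)/Z(T)\) with \(\operatorname{tr}F_i=0\). Since both \(q_i\) and \(\widehat q_i\) lie in \([0,1]\), the bound by \(1\) is trivial. We may therefore assume \(16\sqrt{H_M(\alpha)/s}<1\), so \(s\ge256H_M(\alpha)\); this lets square-root terms dominate linear ones below.

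The proof controls two events, each failing with probability a fraction of \(\alpha\). The first is \(Z(T)\ge1/2\). This follows from the lower tail \eqref{app:reuse:quadratic-lower} with \(\|T\|_F\le1\) and \(u=\log(5/\alpha)\). The second is \(\max_i|Z(F_i)|\le8\sqrt{H_M/s}\). On both events the ratio bound follows.

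For the numerator, diagonalize \(T\) and split its eigenbasis into a head \(H\) (the top \(r\) eigenvalues, \(r=\min\{\lceil\sqrt{\log(2M)}\rceil,n\}\)) and a tail \(L\). Then \(\lambda_{\min}(T_H)\ge\lambda\) and \(\|T_L\|_{\mathrm{op}}\le1/r\). Write \(E_i=T^{1/2}K_iT^{1/2}\) with \(0\preceq K_i\preceq I\) (pseudo-inverse on the range of \(T\)) and decompose \(Z(F_i)=Z(F_{i,HH})+2Z(F_{i,HL})+Z(F_{i,LL})\).

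\textbf{Head term.} \(Z(F_{i,HH})-\operatorname{tr}F_{i,HH}=\operatorname{tr}\bigl(F_{i,HH}(S-I_r)\bigr)\), where \(S=s^{-1}G_HG_H^\top\) is the head sample covariance. Since \(-T_{HH}\preceq F_{i,HH}\preceq T_{HH}\), we have \(\|F_{i,HH}\|_*\le\operatorname{tr}T_{HH}\le1\). Hence this term is at most \(\|S-I\|_{\mathrm{op}}\) uniformly in \(i\). Bounds \eqref{app:reuse:gaussian-upper}–\eqref{app:reuse:gaussian-lower} with \(t=\log(5/\alpha)\) give \(\|S-I\|\lesssim\sqrt{r/s}+\sqrt{\log(5/\alpha)/s}\), up to quadratic corrections absorbed via \(s\ge256H_M\). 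There is no union bound here.

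\textbf{Tail term.} \(F_{i,LL}=T_L^{1/2}(K_{i,LL}-q_iI)T_L^{1/2}\) has \(\|F_{i,LL}\|_F^2\le\|T_L\|_{\mathrm{op}}\operatorname{tr}T_L\le1/r\) and \(\|F_{i,LL}\|_{\mathrm{op}}\le1/r\). Apply \eqref{app:reuse:quadratic-two-sided} with \(u=\log(2M)+\log(5/\alpha)\) and a union bound over \(i\). This yields \(\sqrt{(\log(2M)+\log(5/\alpha))/(rs)}\) plus a lower-order term. Since \(r\ge\sqrt{\log 2M}\), this is \(\lesssim\sqrt{H_M/s}\) (using \(\sqrt{a+b}\le\sqrt a+\sqrt b\) and \(r\ge1\)).

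\textbf{Cross term.} \(F_{i,HL}=T_H^{1/2}K_{i,HL}T_L^{1/2}\), because \(T\) is block diagonal. Conditional on \(G_H\), the sum \(\sum_j g_{j,H}^\top F_{i,HL}g_{j,L}\) is centered Gaussian with variance \(\sum_j\|T_L^{1/2}K_{i,LH}T_H^{1/2}g_{j,H}\|^2\le\|T_L\|_{\mathrm{op}}\sum_j g_{j,H}^\top T_H^{1/2}K_{i,HL}K_{i,LH}T_H^{1/2}g_{j,H}\). Using \(K_{i,HL}K_{i,LH}\preceq I/4\), this is \(\le(s/4r)\,\operatorname{tr}(T_HS)\le(s/4r)\|S\|_{\mathrm{op}}\). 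On the head event \(\|S\|\le2\), so a Gaussian tail plus a union bound over \(M\) gives a contribution \(\lesssim\sqrt{(\log 2M+\log(5/\alpha))/(rs)}\), of the same order as the tail term. Exact zero means require no extra care.

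Finally, \(\operatorname{tr}F_i=0\) removes the mean. Allocating \(\alpha/5\) to each of the events (denominator, two head singular bounds, tail, cross) and summing the constants gives the bound \(8\sqrt{H_M/s}\), hence \(16\sqrt{H_M/s}\) after dividing by \(Z(T)\ge1/2\). If \(n<r\), take the whole space as head; the tail and cross terms vanish.

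\textbf{Main obstacle.} The delicate part is not the structure but tracking the absolute constants to land at \(16\). The linear \(u/s\) terms and the \(\sqrt{r/s}\) versus \(\sqrt{H_M/s}\) comparison must be absorbed using \(s\ge256H_M\). Along the way one must check that \(\lceil\sqrt{\log 2M}\rceil\le2\sqrt{\log 2M}\) and that \(\log(2M)\ge\log2\), so that \(r\) is comparable to \(\sqrt{\log 2M}\).
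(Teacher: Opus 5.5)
Your proposal is correct and follows the paper's proof essentially step for step. It uses the same normalization and centering, the same head/tail split at \(r=\lceil\sqrt{\log(2M)}\rceil\) with nuclear-norm duality controlling all heads at once, the union bound on the tail with \(u=\log(2M)+\log(5/\alpha)\), the conditional Gaussian cross term via \(K_{i,HL}K_{i,LH}\preceq I/4\), and the same five events of probability \(\alpha/5\) after reducing to \(s\ge256H_M(\alpha)\).

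The only soft spot is the bookkeeping. Write \(z=\sqrt{H_M(\alpha)/s}\). Summing your estimates with \(r\le2\sqrt{\log(2M)}\) and \(\|S\|_{\mathrm{op}}\le2\) gives a numerator bound of about \(8z+6z^2\), slightly above your claimed \(8z\). The fix is easy: the same lower-tail bound actually yields \(Z(T)\ge1-2\sqrt{\log(5/\alpha)/s}\ge7/8\), as the paper uses, rather than \(1/2\). That leaves ample room for the constant \(16\).
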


\begin{proof}[Proof of Proposition~\ref{app:reuse:uniform}]
Both ratios are unchanged by a common scaling of all matrices, so assume
$\operatorname{tr} T=1$.  As $T\ne0$, the denominator $Z(T)$ is strictly positive
almost surely.  The inequality $0\preceq E_i\preceq T$ also gives
$0\le q_i,\widehat q_i\le1$.

On the support of $T$, write
\[
E_i=T^{1/2}K_iT^{1/2},~~ 0\preceq K_i\preceq I,
~~ q_i=\operatorname{tr} E_i.
\]
This factorization remains valid for singular $T$: a vector in the kernel
of $T$ lies in the kernel of every $E_i$, and $K_i$ can be set to zero on
that kernel.  Define
\begin{equation*}
F_i=E_i-q_iT=T^{1/2}(K_i-q_iI)T^{1/2}.
\end{equation*}
Then $\operatorname{tr} F_i=0$, $\|K_i-q_iI\|_{\mathrm{op}}\le1$, and
\begin{equation*}
\widehat q_i-q_i=\frac{Z(F_i)}{Z(T)}.
\end{equation*}

\paragraph{Deterministic spectral decomposition.}
Choose an integer $r\ge1$, diagonalize $T$, and put its largest
$h=\min(r,n)$ eigenvalues in a head block $T_H$ and the remaining
eigenvalues in a tail block $T_L$.  The orthogonal change of basis preserves
the distribution of the Gaussian probes.  If $r<n$, the eigenvalue ordering
and $\operatorname{tr} T=1$ imply
\[
\|T_L\|_{\mathrm{op}}\le\frac1{r+1}\le\frac1r.
\]
If $r\ge n$, the tail and all cross blocks are empty; all bounds for
those blocks below are interpreted as zero.  This convention allows us to
choose $r$ without assuming that the ambient dimension is large.

For any symmetric contraction $L$ and PSD matrix $D$,
\begin{align*}
\|D^{1/2}LD^{1/2}\|_*
&\le\|D^{1/2}\|_F\|LD^{1/2}\|_F
\le\operatorname{tr} D,
\\
\|D^{1/2}LD^{1/2}\|_F^2
&\le\|D\|_{\mathrm{op}}\operatorname{tr}(DL^2)
\le\|D\|_{\mathrm{op}}\operatorname{tr} D.
\end{align*}
Applying these inequalities to the principal blocks of
$K_i-q_iI$ gives
\begin{equation}
\|F_{i,HH}\|_*\le1,~~
\|F_{i,LL}\|_F^2\le1/r,~~
\|F_{i,LL}\|_{\mathrm{op}}\le1/r.
\label{app:reuse:block-bounds}
\end{equation}

The cross block has a sharper bound because $K_i$ is a positive
contraction.  Taking the head principal block of $K_i-K_i^2\succeq0$
shows that
\[
K_{i,HL}K_{i,LH}
\preceq K_{i,HH}-K_{i,HH}^2
\preceq\tfrac14I.
\]
Since $T$ is block diagonal, $F_{i,HL}=E_{i,HL}$, and therefore
\begin{align*}
\|F_{i,HL}\|_F^2
&=\operatorname{tr}\bigl(T_H^{1/2}K_{i,HL}T_LK_{i,LH}T_H^{1/2}\bigr)
\\
&\le\|T_L\|_{\mathrm{op}}
      \operatorname{tr}\bigl(T_HK_{i,HL}K_{i,LH}\bigr)
\le\frac1{4r}.
\end{align*}
None of these estimates requires $E_i$ and $T$ to commute.

\paragraph{Uniform control of the head.}
Let $G_H$ and $G_L$ contain the corresponding rows of the probe matrix.
For $t>0$, set
\begin{equation*}
L_0=\log(2M)+t,~~
a=\frac{\sqrt r+\sqrt{2t}}{\sqrt s},~~
b=\frac{L_0}{rs}.
\end{equation*}
By \eqref{app:reuse:gaussian-upper}--\eqref{app:reuse:gaussian-lower}
and $h\le r$, an event $\mathcal H$ of probability at least
$1-2e^{-t}$ satisfies
\begin{equation}
\|G_H\|_{\mathrm{op}}\le\sqrt s(1+a),~~
\left\|G_HG_H^\top/s-I_h\right\|_{\mathrm{op}}\le2a+a^2.
\label{app:reuse:head-event}
\end{equation}
For completeness, if $a<1$, then $s>r\ge h$ and the lower
singular-value bound gives
$\lambda_{\min}(G_HG_H^\top/s)\ge(1-a)^2$.
If $a\ge1$, its lower covariance deviation is at most $1$, which is
already bounded by $2a+a^2$.  Thus \eqref{app:reuse:head-event} does
not require $s\ge r$.
On $\mathcal H$, \eqref{app:reuse:block-bounds} implies, simultaneously
for all candidates,
\begin{equation*}
\left|
\operatorname{tr}\bigl(F_{i,HH}(G_HG_H^\top/s-I_h)\bigr)
\right|\le2a+a^2.
\end{equation*}

\paragraph{Finite-family control of the tail.}
Apply Lemma~\ref{app:reuse:lemma-quadratic} to each $F_{i,LL}$, with
$u=L_0$.  Equation~\eqref{app:reuse:block-bounds} and a union bound
show that
\begin{equation*}
\max_{i\le M}
\left|
\frac1s\operatorname{tr}(G_L^\top F_{i,LL}G_L)-\operatorname{tr} F_{i,LL}
\right|
\le2\sqrt b+2b
\end{equation*}
except on an event of probability at most
$2Me^{-L_0}=e^{-t}$.

\paragraph{Conditional control of the cross term.}
Conditional on $G_H$, each cross contribution
\[
C_i=\frac2s\operatorname{tr}(G_H^\top F_{i,HL}G_L)
\]
is a centered Gaussian random variable, because $G_L$ is independent
of $G_H$.  Its conditional variance satisfies
\begin{align*}
\operatorname{Var}(C_i\mid G_H)
&=\frac4{s^2}\|F_{i,LH}G_H\|_F^2
\\
&\le\frac4{s^2}\|F_{i,HL}\|_F^2\|G_H\|_{\mathrm{op}}^2
\le\frac{(1+a)^2}{rs}
~~\text{on }\mathcal H.
\end{align*}
Consequently, for every realization of $G_H$ in $\mathcal H$, a
conditional Gaussian tail bound and a union bound imply
\[
\mathbb P\left\{
\max_{i\le M}|C_i|>(1+a)\sqrt{2b}\;\middle|\;G_H
\right\}\le2Me^{-L_0}=e^{-t}.
\]
Integrating this conditional bound shows that the cross event can fail
inside $\mathcal H$ with probability at most $e^{-t}$.
No independence across candidates is used.

Since $\operatorname{tr} F_i=0$, the centered head, centered tail, and cross
contributions sum to $Z(F_i)$.  Combining the preceding events gives
\begin{equation}
\max_{i\le M}|Z(F_i)|\le
B:=2a+a^2+2\sqrt b+2b+(1+a)\sqrt{2b}
\label{app:reuse:numerator-bound}
\end{equation}
with probability at least $1-4e^{-t}$.

\paragraph{The denominator.}
Because $T\succeq0$ and $\|T\|_F\le\operatorname{tr} T=1$,
\eqref{app:reuse:quadratic-lower} gives
\begin{equation*}
Z(T)\ge1-2\sqrt{t/s}
\end{equation*}
except on an event of probability at most $e^{-t}$.
Thus, whenever $s>4t$,
\begin{equation}
\max_{i\le M}|\widehat q_i-q_i|
\le\frac{B}{1-2\sqrt{t/s}}
\label{app:reuse:exact-bound}
\end{equation}
with probability at least $1-5e^{-t}$.

\paragraph{Choice of the split and the explicit constant.}
Set
\[
m=\log(2M),~~ r=\lceil\sqrt m\rceil,~~
t=\log(5/\alpha),~~ H=\sqrt m+t=H_M(\alpha).
\]
In particular, $r\ge1$ and $t>1$.  Define
\[
x=\frac{r+2t+(m+t)/r}{s}.
\]
The ceiling causes no dimension-dependent remainder: since
$r\le\sqrt m+1$, $m/r\le\sqrt m$, and $t/r\le t$,
\begin{equation*}
x\le\frac{2\sqrt m+1+3t}{s}\le\frac{4H}{s}.
\end{equation*}
The empty-tail convention covers $r\ge n$.

Let $h_0=(r+2t)/s$, so that $x=h_0+b$ and $a^2\le2h_0$.
The square-root terms in \eqref{app:reuse:numerator-bound} are at most
$(2+3\sqrt2)\sqrt x$.  Moreover,
$a\sqrt{2b}\le2\sqrt{h_0b}\le x$, while
$a^2\le2x$ and $2b\le2x$.  Hence
\begin{equation}
B\le(2+3\sqrt2)\sqrt x+5x
\le(4+6\sqrt2)\sqrt{H/s}+20H/s.
\label{app:reuse:simplified-numerator}
\end{equation}
Put $z=\sqrt{H/s}$.  If $s\ge256H$, then $z\le1/16$ and
$\sqrt{t/s}\le z$.  Equations~\eqref{app:reuse:exact-bound} and
\eqref{app:reuse:simplified-numerator} yield
\[
\max_i|\widehat q_i-q_i|
\le\frac{(4+6\sqrt2)z+20z^2}{1-2z}
\le
\frac{4+6\sqrt2+20/16}{1-2/16}\,z
<16z.
\]
The failure probability is at most $5e^{-t}=\alpha$.
If $s<256H$, then $16\sqrt{H/s}>1$ and the desired bound is
deterministic, since both ratios lie in $[0,1]$.
Together these cases prove the stated bound with
$\min\{1,16\sqrt{H_M(\alpha)/s}\}$.
\end{proof}

\subsection{Applying uniform concentration to the boundary family}
\label{app:reuse:width-application}

We now use the arithmetic midpoint
\[
\theta_{\mathrm{add}}=\rho-g/2.
\]
This choice maximizes the smaller additive distance to the two promise
endpoints. It is used for the additive-gap query bound, whereas
$\theta_{\log}$ was used for the exact high-confidence coefficient.
Both rules solve the same certification task.

\begin{proposition}[An explicit simultaneous-certification budget]
\label{app:reuse:width-upper}
Let $0<\rho<1$, $0<g<\rho$, $0<\delta<1$, and let the independent
candidate family have width at most $W$. Accepting when
$\widehat q_E\le\rho-g/2$ makes every required decision correct with
probability at least $1-\delta$ whenever
\begin{equation}
s\ge\frac{2304}{g^2}
\left[\sqrt{\log(4W)}+\log(5/\delta)\right].
\label{app:reuse:explicit-width-budget}
\end{equation}
This proves the upper bound in Theorem~\ref{thm:simultaneous_certification}; the smaller gap range and \(\rho\)-dependent constants in that theorem concern the matching lower bound, not this sufficient budget. The constant \(2304=48^2\) comes from requiring the estimation error \(16\sqrt{H_M(\delta)/s}\) to be at most \(g/3\).
\end{proposition}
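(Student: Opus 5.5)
The plan is to combine the boundary reduction of Lemma~\ref{app:reuse:boundary-lemma} with the uniform concentration bound of Proposition~\ref{app:reuse:uniform}. The threshold \(\theta_{\mathrm{add}}=\rho-g/2\) is fixed in advance, independently of the sketch. We first treat a finite candidate family of width at most \(W\). Lemma~\ref{app:reuse:boundary-lemma} then supplies a sketch-independent set of at most \(2W\) boundary candidates. Every required decision is correct as soon as the required decisions at these boundaries are correct. These boundary matrices satisfy \(0\preceq E_i\preceq T\) and are independent of the probes. Hence Proposition~\ref{app:reuse:uniform} applies to them with \(M=2W\) (pad with repeats if fewer) and \(\alpha=\delta\).

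Next we translate concentration into correct decisions. On the event \(\max_i|\widehat q_i-q_i|\le g/3\), consider a bad boundary with \(q\ge\rho\). It has \(\widehat q\ge\rho-g/3>\rho-g/2\), so it is rejected. A good boundary with \(q\le\rho-g\) has \(\widehat q\le\rho-2g/3<\rho-g/2\), so it is accepted. It remains to check that the budget \eqref{app:reuse:explicit-width-budget} forces \(16\sqrt{H_{2W}(\delta)/s}\le g/3\). Here \(H_{2W}(\delta)=\sqrt{\log(4W)}+\log(5/\delta)\), since \(\log(2M)=\log(4W)\). The required inequality is therefore \(s\ge 48^2H_{2W}(\delta)/g^2=2304\,H_{2W}(\delta)/g^2\), which is exactly the stated condition. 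The failure probability is then at most \(\delta\).

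Finally, we pass to countable families using the finite-subset argument following Lemma~\ref{app:reuse:boundary-lemma}. Choose increasing finite subfamilies; each still has width at most \(W\). The budget and threshold above do not depend on the subset size, so each finite subfamily has error probability at most \(\delta\). The failure events increase to the full-family failure event, and continuity from below gives the same bound \(\delta\).

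The only delicate point is the independence requirement. The boundary set depends on the true errors, the chain cover and the fixed family, but not on \(G\). This is what allows Proposition~\ref{app:reuse:uniform} to be applied to a deterministic (conditionally fixed) list of \(M\) matrices. We handle it by conditioning on the construction stage and then averaging, as stated at the start of the appendix. Everything else is arithmetic with the explicit constant 16.
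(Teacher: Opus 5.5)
Your proposal is correct and follows the paper's proof. Both arguments reduce to at most $2W$ sketch-independent boundary candidates and apply Proposition~\ref{app:reuse:uniform} once with $\alpha=\delta$. Both then use the $g/3$-versus-$g/2$ slack to obtain correct boundary decisions, and finish with the finite-exhaustion argument for countable families. The only cosmetic difference is that you pad the boundary list to exactly $M=2W$, whereas the paper keeps $M\le 2W$ and uses $H_M(\delta)\le H_{2W}(\delta)$. Any padding matrix with $0\preceq E\preceq T$ works, which also handles an empty boundary set.
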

\begin{proof}
For a finite family, form the boundary set in
Lemma~\ref{app:reuse:boundary-lemma}. If it is empty, there are no required
decisions. Otherwise let its cardinality be $M\le2W$. Apply
Proposition~\ref{app:reuse:uniform} once to the entire boundary set,
with $\alpha=\delta$. Since
\[
H_M(\delta)\le\sqrt{\log(4W)}+\log(5/\delta),
\]
condition \eqref{app:reuse:explicit-width-budget} implies
$16\sqrt{H_M(\delta)/s}\le g/3$.
Thus, with probability at least $1-\delta$, every boundary estimate has
absolute error at most $g/3$. For each existing bad boundary,
\[
\widehat q_{E^{\mathrm{bad}}}
\ge q_{E^{\mathrm{bad}}}-g/3
\ge\rho-g/3>\rho-g/2,
\]
so it is rejected. For each existing good boundary,
\[
\widehat q_{E^{\mathrm{good}}}
\le q_{E^{\mathrm{good}}}+g/3
\le\rho-2g/3<\rho-g/2,
\]
so it is accepted. The strict slack covers both promise endpoints
without any argument about ties. The boundary reduction then proves
simultaneous correctness for the finite family.

For a countable family, apply this same budget and threshold to every
finite subset in an increasing exhaustion. Each subset still has at
most $2W$ boundary candidates, so its error probability is at most
$\delta$, independently of subset size. The increasing error events
and continuity of probability give the full-family guarantee.

The uniform concentration result has already accounted for all the
boundary matrices jointly. No further union bound over the $2W$
boundaries is taken here. Such an additional step would discard the
improved dependence on width that the shared-energy analysis provides.
\end{proof}

\subsection{Independent families and validation-dependent visits}
\label{app:reuse:independence}

On the simultaneous event above, every required decision in the fixed
candidate family is correct. Therefore the visiting order, skipped
checks, revisits, switches between paths, and the final selection may
be arbitrary measurable functions of the validation observations and
other available information. No independence is required of the final
selected index. Independence is imposed on the complete candidate
universe before that selection.

It is not enough that each realized run happens to visit a short or
ordered path. To see this, take $A=I_n$, draw $G\in\mathbb R^{n\times s}$
with $s<n$, and let $P_G$ be the orthogonal projection onto its column
space. The Gaussian matrix has rank $s$ almost surely, and
\[
(I-P_G)G=0,~~
\widehat q(P_G)=0,~~
q(P_G)=\frac{n-s}{n}.
\]
For any target $\rho<(n-s)/n$, both midpoint rules would accept this
unsafe candidate almost surely. The realized singleton is trivially
ordered, but its construction depends on the validation sketch and
falls outside the independent-family assumptions.

Finally, the boundary reductions above hold for the fixed target
$\rho$ and gap $g$. Their boundary matrices generally change when the
target changes. The proof therefore does not grant simultaneous
validity for an arbitrarily selected target after observing the sketch.
The finite-family concentration proposition itself controls all the
ratios at once, but replacing the full family size by its chain width
uses boundaries for the specified, fixed classification problem.

\subsection{Reuse for mean-square output error}
\label{app:reuse:mean-square}

Let the input distribution have a known second moment
$K=\mathbb E[xx^\top]\succeq0$, and suppose
$\mathbb E\|Ax\|_2^2>0$. Set $B=AK^{1/2}$ and
\[
T_K=B^\top B=K^{1/2}A^\top AK^{1/2},~~
E_i^K=B^\top(I-P_i)B
=K^{1/2}A^\top(I-P_i)AK^{1/2}.
\]
For orthogonal $P_i$, these matrices satisfy
$0\preceq E_i^K\preceq T_K$ and
\[
\frac{\mathbb E\|(I-P_i)Ax\|_2^2}{\mathbb E\|Ax\|_2^2}
=\frac{\operatorname{tr}E_i^K}{\operatorname{tr}T_K}.
\]
Congruence preserves the Loewner order: every chain of original
residual energies maps to a chain of the transformed energies. Hence
a width-$W$ cover remains a cover by at most $W$ chains after this
transformation. Singular $K$ causes no difficulty because
$\operatorname{tr}T_K>0$ is assumed.

Each Gaussian response needed for the transformed problem is
$Bg_j=A(K^{1/2}g_j)$ and therefore costs exactly one forward query to
$A$. Once these responses are cached, all candidate projections use
the same cache. The finite- and countable-family reductions, the
fixed-width confidence bound, and the shared-energy upper bound above
therefore apply for every prescribed $K$, provided that $K$ and the
complete candidate family are independent of the validation probes.
The actual input distribution need not be Gaussian; only its known
second moment enters the target.

When the worst case also ranges over known second moments, the
subclass $K=I_n$ contains the Frobenius problem. Thus the matching
lower bounds of Appendix~\ref{app:reuse-lower} also apply to that joint
worst case. This inclusion does not assert the same lower bound for
every individual prescribed, possibly singular, $K$.

\section{Adaptive Lower Bounds for Simultaneous Certification}
\label{app:reuse-lower}

This appendix proves the lower bound in
Theorem~\ref{thm:simultaneous_certification}. The argument applies to
arbitrary adaptive forward-query procedures, rather than only to the
Gaussian ratio estimator. All logarithms are natural.

\subsection{The task and the dimension-uniform quantifiers}
\label{app:width:model}

Fix $\rho\in(0,1)$ and a gap $g\in(0,\rho)$. An unknown, nonzero
matrix $A\in\mathbb R^{m\times n}$ is accessed through responses $Av$.
The public candidate family consists of orthogonal residual projectors
$R_1,\ldots,R_M\in\mathbb R^{m\times m}$; the corresponding
approximation projectors are $I_m-R_j$. Define
\begin{equation}
 q_j(A)=\frac{\operatorname{tr}(A^\top R_jA)}
                    {\operatorname{tr}(A^\top A)},
 ~~ E_j(A)=A^\top R_jA,
 ~~ T(A)=A^\top A.
 \label{app:width:ratios}
\end{equation}
A decision $D_j=1$ means acceptance. Simultaneous safety and
completeness require, for every admissible deterministic $A$ and family,
\begin{equation}
 \mathbb P_A\!\left\{
 \begin{array}{ll}
 D_j=0&\text{for all }j\text{ with }q_j(A)\ge\rho,\\
 D_j=1&\text{for all }j\text{ with }q_j(A)\le\rho-g
 \end{array}\right\}\ge1-\delta.
 \label{app:width:success}
\end{equation}
Candidates in $(\rho-g,\rho)$ impose no decision requirement.
The family has width at most $W$ if the matrices $E_j(A)$ can be covered
by at most $W$ Loewner chains. In particular, every family with $M\le W$
is admissible, regardless of $A$.

The algorithm knows the dimensions, projectors, and testing parameters.
Its query and decision maps are measurable and may use private randomness
independent of $A$. Every response concerning $A$ counts toward a hard
budget $s\in\mathbb Z_{\ge0}$, including responses obtained before a
putative validation phase. Query directions may depend on the complete
preceding transcript, and early stopping is allowed. No construction
transcript, matrix factorization, exact energy, or other observation of
$A$ is supplied for free. A dimension-uniform budget is one bound $s$
valid for all finite $m,n$ and admissible pairs $(A,\{R_j\})$; the
algorithm itself may depend on these public inputs.

\begin{theorem}[Simultaneous certification requires width and confidence information]
\label{app:width:lower-bound}
Let $W\ge1$ be an integer and put $w=\min\{\rho,1-\rho\}$. If
$0<g\le w/32$ and $0<\delta\le1/4$, every dimension-uniform procedure
satisfying~\eqref{app:width:success} on all families of width at most
$W$ has
\begin{equation}
 s\ge\frac{w^2}{65536g^2}
       \left[\sqrt{\log(W+1)}+\log\frac1\delta\right].
 \label{app:width:final-bound}
\end{equation}
The numerical constant is not optimized. The lower bound already
holds on finite families of fixed orthogonal residual projectors.
\end{theorem}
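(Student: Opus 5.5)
The bound is the maximum of two separate lower bounds, one of order $w^2g^{-2}\log(1/\delta)$ and one of order $w^2g^{-2}\sqrt{\log(W+1)}$. Since $\max\{x,y\}\ge(x+y)/2$, it suffices to prove each with the constant $w^2/32768$.

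\emph{Confidence term.} Singleton families have width one, so Theorem~\ref{thm:single_certification} gives $p_s(\Delta/2)\le\delta$. Lemma~\ref{app:single:f_asymptotics} bounds $p_s(u)$ below by $(2c_s/s)e^{-sI(u)}$. For $g\le w/32$ we have $\Delta\le 2g/w$ up to an absolute factor, and $I(u)\le u^2/8$, so $sI(\Delta/2)\lesssim s g^2/w^2$. The prefactor $2c_s/s\gtrsim s^{-1/2}$ costs only a lower-order logarithm. For $\delta\le1/4$ I will absorb it: either $s\ge1$ is already large compared with $w^2g^{-2}\log(1/\delta)$, or the exponential estimate forces it. If the prefactor is troublesome at small $s$, the fallback is a direct two-point bound: for $\delta\le1/4$ the Bayes error $p_s(\Delta/2)$ stays above $1/4$ unless $s\Delta^2$ is bounded below by a constant.

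\emph{Width term.} I take $M=W$ singleton candidates, which is admissible since $M\le W$, and run the packing and Fano scheme described in the main text. Fix $d$ with $d^2\asymp\log(W+1)$; when $W$ is too small for $d\ge2$, the confidence term already dominates because $\log(1/\delta)\ge\log 4$. Lemma~\ref{app:width:packing} gives $M$ subspaces $S_i$ of dimension $d/2$ in $\mathbb R^d$ with pairwise separation $\|\Pi_{S_i}-\Pi_{S_j}\|_F^2\gtrsim d$.

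Set $\Sigma_i=(1/d)(I+c\,w^{-1}g(\ldots)\Pi_{S_i}\text{-perturbation})$, built from two blocks, a residual block of mass about $\rho$ and a captured block. Use fixed residual projectors $R_j$, padded into $m=2d$ coordinates, chosen so that $\operatorname{tr}(R_i\Sigma_i)=\rho-2g$ while $\operatorname{tr}(R_j\Sigma_i)\ge\rho+2g$ for $j\neq i$. The separation of the packing converts an $O(g/d)$ per-direction perturbation into an $O(g)$ trace gap. The eigenvalues stay within a constant factor of $1/d$, using $g\le w/32$, so $\mathrm{KL}(\mathcal N(0,\Sigma_i)\|\mathcal N(0,\Sigma_j))\lesssim d\cdot(g/w)^2$.

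Simultaneous correctness forces the accepted set to be exactly $\{i\}$. By Lemma~\ref{app:single:innovation}, the adaptive transcript is a $\Sigma$-independent function of $s$ i.i.d.\ draws from $\mathcal N(0,\Sigma_i)$. Fano's inequality then gives $s\,d\,g^2/w^2\gtrsim\log M\asymp d^2$, that is, $s\gtrsim w^2\sqrt{\log(W+1)}/g^2$.

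\emph{Leakage.} As in Proposition~\ref{app:single:finite_lower}, the matrix has columns $\mathcal N(0,\Sigma_i/n)$, and its realized ratios $q_j(A)$ concentrate at the designed values as $n\to\infty$. The probability that $A$ leaves the prescribed regions, or that the width condition fails (it holds automatically, since $M=W$), therefore vanishes. Any procedure achieving~\eqref{app:width:success} thus decodes $i$ with error at most $\delta+o(1)\le 1/4+o(1)$, which suffices for Fano with a constant error.

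\emph{Main obstacle.} The hardest step is the explicit covariance and projector construction with the two-sided separation~\eqref{eq:hard_family_separation}. In particular, $\operatorname{tr}(R_j\Sigma_i)$ must jump by $4g$ for every $j\neq i$ while the KL divergence stays only $O(dg^2/w^2)$. I would try $R_j=\Pi_{S_j}\oplus(\text{fixed block})$, with $\Sigma_i$ tilted by $+\epsilon\Pi_{S_i^\perp}-\epsilon\Pi_{S_i}$ inside the first block. Then $\operatorname{tr}(R_j\Sigma_i)-\operatorname{tr}(R_i\Sigma_i)=(2\epsilon/d)\,\|\Pi_{S_i}-\Pi_{S_j}\|_F^2/2\gtrsim\epsilon$, so $\epsilon\asymp g$ suffices. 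The fixed block is used to tune the base level to $\rho$, with the mass balance controlled by $w$.
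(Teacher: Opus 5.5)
\textbf{Width term.} This part follows the paper's own argument. You use the graph-projector packing of Lemma~\ref{app:width:packing}. You tilt a packing block by $+\epsilon\Pi_{S_i^\perp}-\epsilon\Pi_{S_i}=-\epsilon Q_i$, with $Q_i=2P_i-I_d$, and pad it with fixed scalar blocks that set the base level; the paper takes $\frac wd(I_d-aQ_i)$ with $a=16g/w$ and $R_j=\operatorname{diag}(P_j,1,0)$. Equal determinants make the divergence $a^2[d-\operatorname{tr}(Q_iQ_j)]/[2(1-a^2)]\le a^2d/(1-a^2)$. The rest is the innovation simulator, Chebyshev leakage, and Fano with $M\ge16$.

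Two bookkeeping points. First, state your normalization $(1/d)(I+\cdots)$ relative to the total energy. The packing block splits its energy roughly evenly between residual and captured parts, so it can carry at most about a fraction $2w$ of the total. The relative tilt must then be of order $g/w$; the paper uses fraction exactly $w$, and this is where $w^2$ enters. Second, the statement has an explicit constant, and you never track it. In particular, asking only $w^2/32768$ of each term does not settle $1\le W<16$ from the confidence term alone, because there the bracket in \eqref{app:width:final-bound} can reach about $2.2\log(1/\delta)$.

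\textbf{Confidence term: the gap.} Combining $p_s(\Delta/2)\le\delta$ with the lower estimate $p_s(u)\ge(2c_s/s)e^{-sI(u)}$ gives only
\[
\log(1/\delta)\le sI(\Delta/2)+\tfrac12\log(2\pi s)+o(1).
\]
That estimate loses a factor of order $\tanh(\Delta/4)\asymp g/w$ against the true tail. The $\tfrac12\log s$ term is lower order at fixed $g$, but the theorem is uniform in $g$ and $\delta$.

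For example, take $\rho=1/2$, $W=1$ and $\delta=g=e^{-10^6}$. The right side of \eqref{app:width:final-bound} is about $3.81g^{-2}$. Yet $s=\lfloor3.7g^{-2}\rfloor$ satisfies your inequality, since the logarithmic term alone is about $10^6+1.6$, and it also has $s\Delta^2\approx59$. So neither your main estimate nor your fallback $s\Delta^2\gtrsim1$ excludes this budget, and the ``absorb'' step is circular.

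\textbf{How to fix it.} The paper avoids any polynomial prefactor with a separate two-point relative-entropy bound. For $\Sigma_0=\operatorname{diag}(\rho+2g,1-\rho-2g)$ and $\Sigma_1=\operatorname{diag}(\rho-2g,1-\rho+2g)$, the per-observation divergence is at most $32g^2/w^2$. Data processing from the simulated transcript to the single accept bit, followed by $n\to\infty$, gives $32sg^2/w^2\ge\operatorname{kl}(1-\delta,\delta)\ge\tfrac14\log(1/\delta)$, i.e.\ $s\ge w^2\log(1/\delta)/(128g^2)$.

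If you prefer to keep your route, insert $I(t)\le t^2/8$ into $p_s(u)=c_s\int_u^\infty e^{-sI(t)}\,dt$. This bounds $p_s(u)$ below by an absolute constant times $\mathbb P\{Z\ge\sqrt{s}\,u/2\}$ for standard normal $Z$, with no $s^{-1/2}$ loss. A Gaussian tail lower bound then gives $s\Delta^2\gtrsim\log(1/\delta)$ uniformly.
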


We prove the theorem below. The width term is obtained by identifying a
hidden covariance label, and the confidence term by testing two
covariances. The hard families used for these two terms need not be the
same: a uniform procedure must succeed on both.

\subsection{A subspace packing with quadratic log cardinality}
\label{app:width:packing-section}

For completeness, we state the precise concentration ingredient and
derive the packing. The sole geometric input is the standard
logarithmic Sobolev inequality for Haar measure on $\mathrm{SO}(k)$,
with its constant specified below.

\begin{lemma}[Haar concentration used in the packing]
\label{app:width:haar}
Let $k\ge3$, let $U$ be Haar distributed on $\mathrm{SO}(k)$, and let
$F$ be a smooth function satisfying
$|F(U)-F(V)|\le L\lVert U-V\rVert_F$. For $L>0$ and $t>0$,
\begin{equation*}
 \mathbb P\{F(U)-\mathbb EF(U)\ge t\}
 \le\exp\left\{-\frac{(k-2)t^2}{24L^2}\right\}.
\end{equation*}
\end{lemma}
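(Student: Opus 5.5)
The plan is to derive the bound from the Bakry--\'Emery curvature criterion followed by the Herbst argument. No new geometric input is needed beyond the standard Ricci lower bound for $\mathrm{SO}(k)$. Throughout, equip $\mathrm{SO}(k)$ with the bi-invariant Riemannian metric induced by the Frobenius inner product $\langle X,Y\rangle=\operatorname{tr}(X^\top Y)$ on the Lie algebra of skew-symmetric matrices. Normalized Haar measure is then the Riemannian volume measure, up to scaling.

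\emph{Step 1 (curvature).} For a compact Lie group with bi-invariant metric, $\operatorname{Ric}(X,X)=\tfrac14\sum_j\|[X,e_j]\|^2$ over an orthonormal basis $\{e_j\}$ of the Lie algebra. For $\mathfrak{so}(k)$ with the Frobenius metric, this evaluates to $\operatorname{Ric}=\tfrac{k-2}{4}\,g$, which is the standard computation (e.g.\ in Meckes' treatment of random matrices on classical groups). Since $k\ge3$, the curvature is strictly positive.

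\emph{Step 2 (log-Sobolev and Herbst).} With $c=(k-2)/4$, the Bakry--\'Emery criterion gives
\[
\operatorname{Ent}(f^2)\le\tfrac{2}{c}\,\mathbb E|\nabla f|^2 .
\]
Next, relate the Lipschitz hypothesis to the gradient. The chordal distance $\|U-V\|_F$ is at most the geodesic distance, so a function that is $L$-Lipschitz in Frobenius norm is $L$-Lipschitz for the Riemannian distance. For smooth $F$ this yields $|\nabla F|\le L$ pointwise. The Herbst argument applies the inequality to $f=e^{\lambda F/2}$ and integrates the resulting differential inequality for $\log\mathbb E e^{\lambda(F-\mathbb EF)}$. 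It gives
\[
\mathbb E e^{\lambda(F-\mathbb EF)}\le e^{\lambda^2L^2/(2c)} .
\]
Optimizing Chernoff's bound then yields
\[
\mathbb P\{F-\mathbb EF\ge t\}\le\exp\{-ct^2/(2L^2)\}=\exp\{-(k-2)t^2/(8L^2)\}.
\]
This implies the stated bound with $24$ in place of $8$.

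\emph{Main obstacle.} The only delicate point is the normalization. The Ricci constant depends on whether the metric is $\operatorname{tr}(X^\top Y)$ or $\tfrac12\operatorname{tr}(X^\top Y)$, and cited log-Sobolev constants sometimes use $\operatorname{Ent}(f^2)\le C\,\mathbb E|\nabla f|^2$ with different factors of $2$. I would therefore verify the Ricci formula directly on the basis $e_{ab}=(E_{ab}-E_{ba})/\sqrt2$, where each $X=e_{ab}$ fails to commute with exactly $2(k-2)$ basis elements. The slack between $8$ and $24$ absorbs any factor-of-three discrepancy from such conventions, so the stated constant is safe. If one prefers to avoid curvature entirely, a fallback is Gromov--Milman concentration via the same Ricci bound. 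Smoothness of $F$ is assumed in the statement, so no approximation step is required.
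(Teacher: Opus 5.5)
Your proposal is correct and follows essentially the same route as the paper. Both arguments use a log-Sobolev inequality $\operatorname{Ent}(f^2)\le\frac{8}{k-2}\mathbb E\lVert\nabla f\rVert^2$ on $\mathrm{SO}(k)$: the paper cites it from Meckes, Theorem~5.16, while you derive it via Bakry--\'Emery from $\mathrm{Ric}=\frac{k-2}{4}g$, which is how that theorem is proved. Both then apply the Herbst argument to $e^{\lambda F/2}$ to obtain the exponent $(k-2)t^2/(8L^2)$ and weaken $8$ to $24$; your explicit step from Frobenius-Lipschitz to $\lVert\nabla F\rVert\le L$, via chordal distance being dominated by geodesic distance, is a detail the paper leaves implicit.
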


\begin{proof}
Write $\mu$ for Haar probability measure. With the metric locally
induced by the Frobenius norm, the logarithmic Sobolev inequality is
\begin{equation}
 \operatorname{Ent}_\mu(f^2)
 \le\frac{8}{k-2}\int\lVert\nabla f\rVert^2\,d\mu,
 ~~
 \operatorname{Ent}_\mu(X)
 =\mathbb E_\mu[X\log X]-\mathbb E_\mu X\log\mathbb E_\mu X.
 \label{app:width:haar-lsi}
\end{equation}
This is the $\mathrm{SO}(k)$ case of
\cite[Theorem~5.16]{meckes2019}, whose log-Sobolev convention is
$\operatorname{Ent}(f^2)\le2C\mathbb E\lVert\nabla f\rVert^2$ with
$C=4/(k-2)$.
Here is the concentration argument, to make the required normalization
explicit. Set $\psi(\lambda)=\log\mathbb E e^{\lambda F(U)}$.
The group is compact, so these exponential moments and their derivatives
exist. Apply~\eqref{app:width:haar-lsi} to
$f=e^{\lambda F/2}$ and use $\lVert\nabla F\rVert\le L$:
\[
 \lambda\psi'(\lambda)-\psi(\lambda)
 \le\frac{2\lambda^2L^2}{k-2}.
\]
Integrating the derivative of $\psi(\lambda)/\lambda$ from zero gives
$\psi(\lambda)-\lambda\mathbb EF\le2\lambda^2L^2/(k-2)$ for
$\lambda>0$. Exponential Markov inequality, optimized at
$\lambda=(k-2)t/(4L^2)$, gives the stronger exponent
$-(k-2)t^2/(8L^2)$. Weakening its denominator to $24$ proves the stated
form, which also agrees with the general concentration statement
in~\cite[Theorem~5.17]{meckes2019}.
\end{proof}

\begin{lemma}[A finite projector packing]
\label{app:width:packing}
For every integer $k\ge3$, put $d=2k$. There exist at least
$\lfloor e^{k(k-2)/96}\rfloor$ rank-$d/2$ orthogonal projectors
$P_i\in\mathbb R^{d\times d}$ with
\begin{equation}
 \operatorname{tr}(P_iP_j)\le\frac{3d}{8}
 ~\text{whenever }i\ne j.
 \label{app:width:overlap}
\end{equation}
In particular, for every $W\ge16$ one can choose exactly $M=W$
projectors with $d\le24\sqrt{\log W}$.
\end{lemma}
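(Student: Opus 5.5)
We construct the packing probabilistically from Haar-random rotations, using Lemma~\ref{app:width:haar} for concentration and a union bound over pairs. Fix the reference projector $P_0=\operatorname{diag}(I_k,0)\in\mathbb R^{d\times d}$ with $d=2k$. We will draw independent Haar rotations $U_1,\ldots,U_N\in\mathrm{SO}(d)$ and set $P_i=U_iP_0U_i^\top$. Each $P_i$ is then a rank-$k=d/2$ orthogonal projector.

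\textbf{Mean and Lipschitz constant.} For a fixed pair $i\ne j$, conditioning on $U_j$ and using invariance, $\operatorname{tr}(P_iP_j)\overset d=\operatorname{tr}(P_0UP_0U^\top)$ with $U$ Haar. Since $\mathbb E[UP_0U^\top]=(k/d)I=I/2$, we get $\mathbb E\operatorname{tr}(P_iP_j)=k/2=d/4$. Next, $F(U)=\operatorname{tr}(P_0UP_0U^\top)=\|P_0UP_0\|_F^2$. This function is smooth, and since $\|P_0UP_0\|_F\le\sqrt k$,
\[
|F(U)-F(V)|\le(\|P_0UP_0\|_F+\|P_0VP_0\|_F)\|U-V\|_F\le2\sqrt k\,\|U-V\|_F,
\]
so $L=2\sqrt k$.

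\textbf{Tail bound for one pair.} We want $F\le3d/8=3k/4$, which means deviating from the mean $k/2$ by at most $t=k/4$. Lemma~\ref{app:width:haar} in dimension $d=2k\ge3$ gives
\[
\mathbb P\{F-k/2\ge k/4\}\le\exp\Bigl\{-\frac{(2k-2)(k/4)^2}{24\cdot4k}\Bigr\}=\exp\Bigl\{-\frac{(k-1)k}{768}\Bigr\}.
\]
This exponent is weaker than the target $k(k-2)/96$ by a constant factor. Recovering the stated constant is the main obstacle, and the fix is a better choice of Lipschitz constant. I would try $F(U)=\|P_0UP_0\|_F^2$ with the observation that it is really a function of the block $X=P_0UP_0$ alone. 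The first remedy is to use the stronger exponent $(k-2)t^2/(8L^2)$ established inside the proof of Lemma~\ref{app:width:haar}. A second remedy is to concentrate $G(U)=\|P_0UP_0\|_F$, which is $1$-Lipschitz, instead of $F$: around its mean $\approx\sqrt{k/2}$, the threshold $\sqrt{3k/4}$ lies a distance of order $\sqrt k$ away, giving an exponent $c(k-2)k$ with $c$ near $(\sqrt{3/4}-\sqrt{1/2})^2/8$. If the constant $96$ still fails, I would accept a slightly different constant, since only the quadratic order $\log M\asymp d^2$ matters downstream. I would check which normalization matches $96$; the claimed constant presumably came from one of these variants.

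\textbf{Union bound and the final sizing.} Write the per-pair failure probability as $e^{-\gamma}$ with $\gamma\ge k(k-2)/48$. The union bound over $\binom N2<N^2/2$ pairs, with $N=\lfloor e^{k(k-2)/96}\rfloor$, gives failure probability below $N^2e^{-\gamma}/2\le1/2<1$. Hence some realization satisfies \eqref{app:width:overlap}.

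For the last claim, given $W\ge16$ we choose the smallest $k\ge3$ with $e^{k(k-2)/96}\ge W+1$, so that $\lfloor e^{k(k-2)/96}\rfloor\ge W$, and keep any $W$ of the projectors. Minimality gives $(k-1)(k-3)<96\log(W+1)$, so $k\le3+\sqrt{96\log(W+1)}$, and this must be checked against $d=2k\le24\sqrt{\log W}$. Using $W\ge16$, i.e.\ $\log W\ge2.77$, we have $2\sqrt{96}\approx19.6$. The additive $6$ and the replacement of $\log(W+1)$ by $\log W$ are then absorbed into $24\sqrt{\log W}$, since $4.4\sqrt{2.77}\approx7.3>6$ plus slack.
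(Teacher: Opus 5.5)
There is a genuine gap, and it is the step you flag yourself. Your union-bound paragraph assumes a per-pair exponent $\gamma\ge k(k-2)/48$, and nothing in the proposal establishes it. The exponents you actually obtain fall well short:
\begin{itemize}
\item $k(k-1)/768$ from Lemma~\ref{app:width:haar} with $L=2\sqrt k$;
\item $k(k-1)/256$ with the sharper exponent $(d-2)t^2/(8L^2)$ from its proof;
\item roughly $0.0063\,k(k-1)$ for $G=\|P_0UP_0\|_F$.
\end{itemize}
These miss $k(k-2)/48$ by factors of about $16$, $5$ and $3$. For large $k$ they miss even $k(k-2)/96$.

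Falling back on ``a slightly different constant'' proves a different lemma. Both the count $\lfloor e^{k(k-2)/96}\rfloor$ and the consequence $d\le24\sqrt{\log W}$ depend on that constant. With exponent $k(k-1)/768$ and a pairwise union bound you would only get $d\approx2\sqrt{1536\log W}\approx78\sqrt{\log W}$. Part of the loss is structural: a union bound over $\binom N2$ pairs needs twice the exponent that the target count requires. Your mean computation, your (valid but loose) Lipschitz bound, and the final sizing argument are otherwise fine.

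The paper removes both losses.
\begin{itemize}
\item It packs greedily. By Haar invariance, each chosen center excludes a set of measure at most $p_k$. So another center exists while $jp_k<1$, giving $\lfloor 1/p_k\rfloor$ centers from a single tail bound.
\item It makes the overlap linear through graph projectors $P(U)=\tfrac12\begin{pmatrix}I_k&U\\U^\top&I_k\end{pmatrix}$, $U\in\mathrm{SO}(k)$, for which $\operatorname{tr}(P(U)P(V))=d/4+\tfrac12\operatorname{tr}(U^\top V)$. The constraint becomes $\operatorname{tr}(U^\top V)\le k/2$. Lemma~\ref{app:width:haar}, applied to the mean-zero, $\sqrt k$-Lipschitz function $\operatorname{tr}U$ with $t=k/2$, gives exactly $p_k=e^{-(k-2)(k/2)^2/(24k)}=e^{-k(k-2)/96}$.
\end{itemize}

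If you want to keep random subspaces in $\mathrm{SO}(d)$, the fix is a sharper gradient bound. On $\mathrm{SO}(d)$ the Riemannian gradient of $F$ satisfies $\|\nabla F\|^2=\|P_0Y-YP_0\|_F^2=2\sum_i c_i(1-c_i)\le k/2$. Here $Y=U^\top P_0U$, and the $c_i$ are the squared cosines of the principal angles. The Herbst argument in the proof of Lemma~\ref{app:width:haar} uses only this gradient bound, not the chordal Lipschitz hypothesis. It gives exponent $(2k-2)(k/4)^2/(8\cdot k/2)=k(k-1)/32\ge k(k-2)/48$, after which your union bound and sizing go through.
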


\begin{proof}
For $U\in\mathrm{SO}(k)$ define the graph projector
\begin{equation*}
 P(U)=\frac12\begin{pmatrix}I_k&U\\U^\top&I_k\end{pmatrix}.
\end{equation*}
Direct multiplication gives $P(U)^2=P(U)$ and
$\operatorname{tr}P(U)=k$. Also,
\[
 \operatorname{tr}(P(U)P(V))
 =\frac d4+\frac12\operatorname{tr}(U^\top V).
\]
Thus pairwise traces $\operatorname{tr}(U^\top V)\le k/2$ imply
\eqref{app:width:overlap}.

The smooth function $F(U)=\operatorname{tr}U$ is $\sqrt{k}$-Lipschitz
by Cauchy--Schwarz. Its Haar mean is zero: multiplying on the left by a
diagonal rotation that changes the signs of any prescribed row and one
other row changes the sign of the prescribed diagonal entry without
changing its distribution. Lemma~\ref{app:width:haar} now gives
\[
 \mathbb P\{\operatorname{tr}U>k/2\}
 \le e^{-k(k-2)/96}=:p_k.
\]
For each fixed center $U_0$, Haar invariance gives the same upper bound
for the set $\{V:\operatorname{tr}(U_0^\top V)>k/2\}$.
After choosing $j$ centers, their excluded sets have total measure at
most $jp_k$. Whenever $jp_k<1$, another center exists. This greedy
construction supplies at least $\lfloor p_k^{-1}\rfloor$ centers.
The trace relation is symmetric in its two arguments, so every selected
pair has the required overlap.

For the last assertion take
\begin{equation}
 k=\left\lceil\sqrt{96\log W}\right\rceil+2,
 ~~ d=2k.
 \label{app:width:choose-dimension}
\end{equation}
Then $k(k-2)\ge96\log W$, so at least $W$ members are available.
Retain exactly $M=W$. Since $W\ge16$,
$d\le2\sqrt{96\log W}+6\le24\sqrt{\log W}$.
\end{proof}

\subsection{A covariance family with exactly one acceptable candidate}
\label{app:width:covariance-section}

Fix a packing from Lemma~\ref{app:width:packing}, and define
$Q_i=2P_i-I_d$ and $a=16g/w$. The gap assumption gives $0<a\le1/2$.
The matrices $Q_i$ satisfy $Q_i^2=I_d$ and
$\operatorname{tr}Q_i=0$, with each eigenvalue $1$ and $-1$ occurring
$d/2$ times. In output dimension $m=d+2$, set
\begin{equation}
 \begin{aligned}
 \Sigma_i&=\operatorname{diag}\left(
       \frac wd(I_d-aQ_i),\beta,\zeta\right),\\
 \beta&=\rho-\frac w2+\frac{3wa}{8},
 ~~
 \zeta=1-\rho-\frac w2-\frac{3wa}{8},\\
 R_j&=\operatorname{diag}(P_j,1,0),~~ 1\le j\le M.
 \end{aligned}
 \label{app:width:covariances}
\end{equation}
The candidates $R_j$ are fixed and do not depend on the hidden label
$i$. All blocks of $\Sigma_i$ are positive definite. More explicitly,
\[
 \frac wd(I_d-aQ_i)\succeq\frac{w}{2d}I_d,
 ~~ \beta\ge\frac\rho2,
 ~~ \zeta\ge\frac{5(1-\rho)}{16}.
\]
The first bound uses $a\le1/2$. The second uses $w\le\rho$; the
third follows from $3wa/8\le3w/16$ and $w\le1-\rho$.
Furthermore,
\begin{equation}
 \operatorname{tr}\Sigma_i=1,
 ~~
 \det\Sigma_i=
 \left(\frac wd\right)^d(1-a^2)^{d/2}\beta\zeta,
 \label{app:width:trace-determinant}
\end{equation}
so every hypothesis has the same total population energy and determinant.

Write $\mu_{ij}=\operatorname{tr}(R_j\Sigma_i)$. From
\eqref{app:width:covariances},
\begin{equation*}
 \mu_{ij}=\rho+\frac{3wa}{8}
                -\frac{wa}{d}\operatorname{tr}(P_jQ_i).
\end{equation*}
Since $P_iQ_i=P_i$, the diagonal case has
$\operatorname{tr}(P_iQ_i)=d/2$. For $j\ne i$,
\eqref{app:width:overlap} gives
\[
 \operatorname{tr}(P_jQ_i)
 =2\operatorname{tr}(P_jP_i)-d/2\le d/4.
\]
Consequently,
\begin{equation}
 \mu_{ii}=\rho-\frac{wa}{8}=\rho-2g,
 ~~
 \mu_{ij}\ge\rho+\frac{wa}{8}=\rho+2g~(j\ne i).
 \label{app:width:population-separation}
\end{equation}
This verifies the separation used in~\eqref{eq:hard_family_separation}:
only candidate $i$ lies below $\rho-g$, while every other candidate
lies above $\rho$. Both sides have strict slack.

The Gaussian pairwise relative entropy can be computed exactly. For
positive definite covariances in dimension $m$,
\[
 \operatorname{KL}\bigl(N(0,\Sigma_i)\Vert N(0,\Sigma_j)\bigr)
 =\frac12\left[
   \operatorname{tr}(\Sigma_j^{-1}\Sigma_i)-m
   +\log\frac{\det\Sigma_j}{\det\Sigma_i}\right].
\]
The determinant term vanishes by~\eqref{app:width:trace-determinant}.
The two scalar blocks each contribute one to the trace. On the first
$d$ coordinates,
\[
 (I_d-aQ_j)^{-1}(I_d-aQ_i)
 =\frac{(I_d+aQ_j)(I_d-aQ_i)}{1-a^2},
\]
whose trace is $[d-a^2\operatorname{tr}(Q_jQ_i)]/(1-a^2)$.
It follows that
\begin{equation*}
 \begin{aligned}
 \operatorname{KL}\bigl(N(0,\Sigma_i)\Vert N(0,\Sigma_j)\bigr)
 =\frac{a^2[d-\operatorname{tr}(Q_iQ_j)]}{2(1-a^2)}
 \le\frac{a^2d}{1-a^2}.
 \end{aligned}
\end{equation*}
Here $|\operatorname{tr}(Q_iQ_j)|\le d$ because $Q_i,Q_j$ are
orthogonal matrices. This upper bound is of order $g^2d/w^2$.

\subsection{Transferring covariance information to adaptive queries}
\label{app:width:adaptive-section}

Under label $i$, draw the $n$ columns of $A$ independently from
$N(0,\Sigma_i/n)$. These Gaussian matrices are priors for a lower
bound on the deterministic-matrix problem; they do not change the
pointwise requirement~\eqref{app:width:success}.

Lemma~\ref{app:single:innovation} proves the Gaussian-innovation
representation in~\eqref{eq:gaussian_innovation} for an arbitrary
positive definite output covariance, including
\eqref{app:width:covariances}. In the present notation, after observing queries $V_t=[v_1,\ldots,v_t]$ and
responses $Y_t=AV_t$, let $M_t=Y_tV_t^\dagger$ and
$\Pi_t=I_n-V_tV_t^\dagger$. Conditional on the transcript and private
seed, the residual $A-M_t$ is centered matrix Gaussian with covariance
\[
 \operatorname{Cov}\bigl((A-M_t)_{a,u},(A-M_t)_{b,v}
                  \mid\text{transcript}\bigr)
 =\frac{(\Sigma_i)_{ab}}{n}(\Pi_t)_{uv}.
\]
This statement follows by the same Gaussian conditioning induction as
for~\eqref{eq:gaussian_innovation}. In particular, for the next query
$v$, with $v_\perp=\Pi_tv$,
\begin{equation*}
 Av=M_tv+\frac{\lVert v_\perp\rVert_2}{\sqrt n}\,\xi,
 ~~ \xi\sim N(0,\Sigma_i),
\end{equation*}
where the innovation is fresh and independent of the preceding
transcript whenever $v_\perp\ne0$. Gaussian components along
orthogonal unexplored input directions are independent even though
the output coordinates can be correlated. If $v_\perp=0$, the response
is determined by the past. After a nonredundant query with
$u=v_\perp/\lVert v_\perp\rVert_2$, the updates are
$M_{t+1}=M_t+\xi u^\top/\sqrt n$ and $\Pi_{t+1}=\Pi_t-uu^\top$.
They preserve the displayed conditional covariance.

Thus the entire transcript is a measurable function of the private
seed and at most $s$ independent $N(0,\Sigma_i)$ vectors. The function
is independent of $i$ and $\Sigma_i$: the simulator uses the supplied
vectors and the query rule, not the unknown covariance. Redundant
queries consume budget but no innovation; after early stopping, unused
vectors are discarded. The statement is also valid for $s=0$.

Let $\mathsf P_{i,n}$ be the resulting transcript law, including the
private seed and output. Data processing and product relative entropy
\citep{scarlett2019fano} give, for every pair $i,j$ and every input dimension $n$,
\begin{equation*}
 \operatorname{KL}(\mathsf P_{i,n}\Vert\mathsf P_{j,n})
 \le s\operatorname{KL}\bigl(N(0,\Sigma_i)\Vert N(0,\Sigma_j)\bigr)
 \le\frac{sa^2d}{1-a^2}.
\end{equation*}
The private seed contributes no relative entropy because it has the
same distribution under both labels. The bound remains valid for
procedures that depend on $n$.

\subsection{A uniform finite-dimensional leakage bound}
\label{app:width:leakage-section}

The population separation must be transferred to realized matrices.
The following elementary estimate makes this step uniform over the
finite set of labels and candidates.

\begin{lemma}[Finite-dimensional prior leakage]
\label{app:width:leakage}
Let $\Sigma\succ0$ have trace one, let the columns of $A$ be independent
$N(0,\Sigma/n)$ vectors, and let $R_1,\ldots,R_M$ be fixed orthogonal
projectors. Write $\mu_j=\operatorname{tr}(R_j\Sigma)$ and let
$q_j(A)$ be as in~\eqref{app:width:ratios}. For $0<g<1$,
\begin{equation*}
 \mathbb P\left\{\max_{j\le M}|q_j(A)-\mu_j|\ge g/2\right\}
 \le\min\left\{1,\frac{128(M+1)}{ng^2}\right\}.
\end{equation*}
\end{lemma}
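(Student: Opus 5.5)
The proof is a second-moment (Chebyshev) argument applied to each ratio separately, followed by a union bound over the \(M\) candidates and the common denominator. The rate \(1/n\) and the linear dependence on \(M\) match this route.

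Write \(A=n^{-1/2}\Sigma^{1/2}Z\), where \(Z\in\mathbb R^{m\times n}\) has independent standard normal entries. Then for any symmetric \(B\),
\[
\operatorname{tr}(A^\top BA)=\frac1n\sum_{c=1}^n z_c^\top \Sigma^{1/2}B\Sigma^{1/2}z_c .
\]
This is an average of \(n\) i.i.d. Gaussian quadratic forms with mean \(\operatorname{tr}(B\Sigma)\) and variance \(2\|\Sigma^{1/2}B\Sigma^{1/2}\|_F^2/n\).

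For \(B=I\), the variance is \(2\|\Sigma\|_F^2/n\le 2/n\), since \(\|\Sigma\|_F\le\operatorname{tr}\Sigma=1\). For \(B=R_j\), it is at most \(2\|\Sigma^{1/2}R_j\Sigma^{1/2}\|_F^2/n\le 2/n\). The last step holds because \(\|\Sigma^{1/2}R_j\Sigma^{1/2}\|_F\le\operatorname{tr}(R_j\Sigma)\le1\), the matrix being positive semidefinite.

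Next, control \(q_j-\mu_j\) through the centered numerator. Set \(N_j=\operatorname{tr}(A^\top R_jA)\) and \(D=\operatorname{tr}(A^\top A)\). Then
\[
q_j-\mu_j=\frac{N_j-\mu_jD}{D}.
\]
The numerator is an average as above with \(B=R_j-\mu_jI\), whose mean is zero because \(\operatorname{tr}\Sigma=1\). Its variance is at most \(2\|\Sigma^{1/2}(R_j-\mu_j I)\Sigma^{1/2}\|_F^2/n\le 2\cdot 4/n\), using the triangle inequality with each piece bounded by \(1\) as above.

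Then define the good event
\[
\{D\ge 1/2\}\cap\bigcap_{j\le M}\{|N_j-\mu_jD|<g/4\}.
\]
On this event, \(|q_j-\mu_j|<(g/4)/(1/2)=g/2\) for every \(j\).

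Chebyshev's inequality gives, for each \(j\),
\[
\mathbb P\{|N_j-\mu_jD|\ge g/4\}\le \frac{8/n}{g^2/16}=\frac{128}{ng^2}.
\]
For the denominator,
\[
\mathbb P\{D<1/2\}\le \frac{2/n}{1/4}=\frac{8}{n}\le\frac{128}{ng^2},
\]
since \(g<1\). A union bound over these \(M+1\) events yields \(128(M+1)/(ng^2)\). The minimum with \(1\) is trivial.

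The only care needed is the variance bound for the centered form \(R_j-\mu_jI\), which is not positive semidefinite. I handle it by the triangle inequality on the two positive semidefinite pieces, each of which has Frobenius norm at most its trace, which is at most \(1\). If that estimate came out too loose, the slack could instead be recovered from the margin in the denominator event. The constants as chosen already fit exactly, so no further adjustment is expected.
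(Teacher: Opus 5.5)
Your argument is correct and is essentially the paper's proof: Chebyshev applied to $M+1$ averages of Gaussian quadratic forms, variance bounds via $\|\cdot\|_F\le\operatorname{tr}$ for positive semidefinite matrices, a union bound, and the identity $q_j-\mu_j=(N_j-\mu_jD)/D$, which gives the same constant $128(M+1)/(ng^2)$. The only difference is bookkeeping. The paper applies Chebyshev to $N_j$ and $D$ separately at level $h=g/8$ and then uses $|q_j-\mu_j|\le(|N_j-\mu_j|+\mu_j|D-1|)/D\le 2h/(1-h)<g/2$. You instead apply it to the centered numerator (variance at most $8/n$, level $g/4$) and to the event $\{D<1/2\}$, which is equally valid.
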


\begin{proof}
Write the columns of $A$ as $z_\ell/\sqrt n$, where the $z_\ell$ are
independent $N(0,\Sigma)$ vectors, and set
\[
 D_n=\frac1n\sum_{\ell=1}^n\lVert z_\ell\rVert_2^2,
 ~~ N_{j,n}=\frac1n\sum_{\ell=1}^n z_\ell^\top R_jz_\ell.
\]
Then $q_j=N_{j,n}/D_n$, $\mathbb ED_n=1$, and
$\mathbb EN_{j,n}=\mu_j\in[0,1]$.
Diagonalizing the relevant Gaussian quadratic form and using
$\operatorname{Var}(\chi_1^2)=2$ gives
\[
 \operatorname{Var}(D_n)=\frac2n\operatorname{tr}(\Sigma^2)\le\frac2n,
 ~~
 \operatorname{Var}(N_{j,n})
 =\frac2n\operatorname{tr}\bigl((\Sigma^{1/2}R_j\Sigma^{1/2})^2\bigr)
 \le\frac2n.
\]
The last inequalities use positivity and trace at most one.
Apply Chebyshev's inequality with $h=g/8$ to these $M+1$ variables.
Except on an event of probability at most
$2(M+1)/(nh^2)=128(M+1)/(ng^2)$, all their deviations from their means
are at most $h$. On that event,
\[
 |q_j-\mu_j|
 \le\frac{|N_{j,n}-\mu_j|+\mu_j|D_n-1|}{D_n}
 \le\frac{2h}{1-h}<\frac g2,
\]
since $g<1$. This proves the claim.
\end{proof}

Apply the lemma under each $\Sigma_i$. Because its bound depends only
on the common trace and family size, it yields
\begin{equation*}
 \eta_n:=\max_i\mathbb P_i\left\{
       \max_j|q_j(A)-\mu_{ij}|\ge g/2\right\}
 \le\min\left\{1,\frac{128(M+1)}{ng^2}\right\}
 \longrightarrow0.
\end{equation*}
Outside this event, candidate $i$ has $q_i(A)<\rho-g$ and every other
candidate has $q_j(A)>\rho$, by
\eqref{app:width:population-separation}. The limit fixes $M,d,g$ and
$s$ first and then increases $n$; no joint limiting assumption is needed.

\subsection{Decoding and the width-dependent query lower bound}
\label{app:width:fano-section}

Choose a label $J$ uniformly from $\{1,\ldots,M\}$ and then draw the
matrix under the corresponding prior. Let $\mathcal T_n$ be the full
transcript. Decode $J$ by returning the unique accepted candidate, and
return a fixed index if the acceptance vector has no unique one.
On the complement of the leakage event, simultaneous correctness
forces exactly candidate $J$ to be accepted. Integrating the pointwise
guarantee over the prior therefore gives
\begin{equation}
 p_{e,n}:=\mathbb P\{\widehat J\ne J\}\le\delta+\eta_n.
 \label{app:width:decoder-upper-error}
\end{equation}
This is where completeness as well as safety is used.

For clarity, the needed Fano inequality
\citep{scarlett2019fano} follows from a short entropy calculation. The error indicator is determined by $(J,\mathcal T_n)$.
Conditioning on it gives
\[
 \operatorname{H}(J\mid\mathcal T_n)
 \le\log2+p_{e,n}\log(M-1)
 \le\log2+p_{e,n}\log M.
\]
Since $\operatorname{H}(J)=\log M$, for $M\ge2$ this implies
\begin{equation*}
 p_{e,n}\ge1-
       \frac{\operatorname{I}(J;\mathcal T_n)+\log2}{\log M}.
\end{equation*}
If $\overline{\mathsf P}_n=M^{-1}\sum_j\mathsf P_{j,n}$, convexity of
relative entropy in its second argument gives
\begin{align}
 \operatorname{I}(J;\mathcal T_n)
 &=\frac1M\sum_i
       \operatorname{KL}(\mathsf P_{i,n}\Vert\overline{\mathsf P}_n)
       \notag\\
 &\le\frac1{M^2}\sum_{i,j}
       \operatorname{KL}(\mathsf P_{i,n}\Vert\mathsf P_{j,n})
 \le\frac{sa^2d}{1-a^2}.
 \label{app:width:mutual-information}
\end{align}
Thus~\eqref{app:width:decoder-upper-error}--\eqref{app:width:mutual-information}
produce the finite-dimensional inequality
\begin{equation*}
 \delta+\eta_n
 \ge1-\frac{sa^2d/(1-a^2)+\log2}{\log M}.
\end{equation*}
It holds for every $n$ and the algorithm supplied for that $n$.
Consequently, a dimension-uniform budget must satisfy the same inequality
with $\eta_n$ removed by letting $n\to\infty$.
For $M\ge16$ and $\delta\le1/4$,
\[
 (1-\delta)\log M-\log2\ge\tfrac12\log M,
\]
so
\begin{equation*}
 s\ge\frac{1-a^2}{2a^2d}\log M
   \ge\frac{3}{8a^2d}\log M.
\end{equation*}

For $W\ge16$, take $M=W$ and $d$ from
\eqref{app:width:choose-dimension}. Substitution of $a=16g/w$ and
$d\le24\sqrt{\log W}$ yields the explicit width term
\begin{equation}
 s\ge\frac{w^2}{16384g^2}\sqrt{\log W}.
 \label{app:width:width-term}
\end{equation}
The $R_j$ are distinct projectors of the same rank $d/2+1$. They are
therefore pairwise incomparable: a nonzero positive semidefinite
difference cannot have trace zero. When $n\ge d+2$, the Gaussian
matrix $A$ has full row rank almost surely. If $B$ is any right inverse
with $AB=I_{d+2}$, then
\[
 A^\top R_iA\preceq A^\top R_jA
 ~\Longrightarrow~
 R_i=B^\top A^\top R_iAB\preceq B^\top A^\top R_jAB=R_j.
\]
The reverse implication is immediate by congruence. Thus the input
energy matrices are also an antichain and have width exactly $M=W$.
For the uniform lower bound, the weaker fact $M\le W$ already ensures
the width promise for every matrix, including rank-deficient ones.

\subsection{The confidence term and all integer widths}
\label{app:width:confidence-section}

Use a single residual projector $R=\operatorname{diag}(1,0)$ and
covariances
\begin{equation*}
 \Sigma_0=\operatorname{diag}(\rho+2g,1-\rho-2g),
 ~~
 \Sigma_1=\operatorname{diag}(\rho-2g,1-\rho+2g).
\end{equation*}
Their entries are positive, their traces equal one, and their population
residual ratios lie in the strict null and alternative regions,
respectively. The Gaussian priors and transcript simulation above
apply with $m=2$.

For the variance ratios appearing in either Gaussian relative entropy,
write the ratio as $1+t$. The gap assumption implies
\[
 |t|\le\frac{4g}{w-2g}\le\frac{2}{15}<\frac12.
\]
The identity
$t-\log(1+t)=\int_0^t u/(1+u)\,du$ gives
$t-\log(1+t)\le t^2$ on $|t|\le1/2$.
Using the two diagonal variance ratios in the Gaussian formula,
\begin{equation*}
 \operatorname{KL}\bigl(N(0,\Sigma_1)\Vert N(0,\Sigma_0)\bigr)
 \le\left(\frac{4g}{w-2g}\right)^2
 \le32\frac{g^2}{w^2}.
\end{equation*}
The middle expression follows because the factor $1/2$ in the Gaussian
formula cancels the number of diagonal terms.

Let $p_{h,n}$ be the probability of accepting the candidate under label
$h$. Lemma~\ref{app:width:leakage}, with $M=1$, gives a number
$\eta_n\to0$ such that pointwise correctness implies
\[
 p_{0,n}\le\delta+\eta_n,
 ~~ p_{1,n}\ge1-\delta-\eta_n.
\]
Data processing from the transcript to this binary decision yields
\begin{equation*}
 32s\frac{g^2}{w^2}
 \ge\operatorname{kl}(p_{1,n},p_{0,n}),
 ~~
 \operatorname{kl}(p,q)=p\log\frac pq+(1-p)\log\frac{1-p}{1-q}.
\end{equation*}
For all sufficiently large $n$, $\delta+\eta_n<1/2$.
Binary relative entropy is increasing in its first argument and
decreasing in its second whenever the first exceeds the second.
Therefore the right side is at least
$\operatorname{kl}(1-\delta-\eta_n,\delta+\eta_n)$.
Letting $n\to\infty$ gives
\begin{align*}
 32s\frac{g^2}{w^2}
 \ge\operatorname{kl}(1-\delta,\delta)
 =(1-2\delta)\log\frac{1-\delta}{\delta}
 \ge\frac14\log\frac1\delta,
  0<\delta\le\frac14.
\end{align*}
For the final inequality, $1-2\delta\ge1/2$ and
$1-\delta\ge\sqrt\delta$ on this interval.
It follows that
\begin{equation}
 s\ge\frac{w^2}{128g^2}\log\frac1\delta.
 \label{app:width:confidence-term}
\end{equation}

We now finish the proof of Theorem~\ref{app:width:lower-bound} with an
explicit constant. If $W\ge16$, then
$\sqrt{\log(W+1)}\le2\sqrt{\log W}$. Thus
\eqref{app:width:width-term} bounds $s$ below by
$w^2\sqrt{\log(W+1)}/(32768g^2)$, while
\eqref{app:width:confidence-term} bounds it below by
$w^2\log(1/\delta)/(128g^2)$. The maximum is at least half the sum
with the smaller coefficient, proving~\eqref{app:width:final-bound}.
For $1\le W<16$,
\[
 \sqrt{\log(W+1)}\le\sqrt{\log16}
 \le2\log4\le2\log(1/\delta).
\]
Hence the confidence term alone implies
\eqref{app:width:final-bound}, with substantial slack.
All inequalities apply to integer budgets $s\ge0$ without assuming
$s\to\infty$. In particular, $s=0$ cannot meet the required error
level; an integer version of~\eqref{app:width:final-bound} can be written
by taking the ceiling of its right side.

\paragraph{Scope of the minimax statement.}
The hard family is selected in the supremum over admissible matrices,
projectors, and dimensions. The result neither declares every
width-$W$ family difficult nor asserts this lower bound at each fixed
input dimension: $n$ coordinate queries can recover a fixed
$m\times n$ matrix exactly. It lower-bounds the total counted forward
information needed for simultaneous safety and completeness. A procedure
that receives a construction transcript without charging its responses
uses a different information model. At fixed interior $\rho$, combining
this lower bound with the common-envelope upper bound gives the stated
order in $W,g,\delta$.

\section{Proofs for Spectrum-Dependent Certification Delay}
\label{app:stopping}

This appendix proves the safety statement in
Proposition~\ref{prop:block_sign}, its spectrum-dependent power bound,
and all delay and allocation claims in Theorem~\ref{thm:diffuse_delay}.
The block count is denoted by $L$ throughout, and $\delta$ is the
error-probability level. In particular, $L$ is fixed in the stopping-time
asymptotics; it is not a matrix dimension or a covariance parameter.

\subsection{Notation, stopping rules, and probability space}
\label{app:stopping:setup}

Let $T\ne0$ be positive semidefinite and let $0\preceq E\preceq T$.
Define
\[
 q=\frac{\operatorname{tr}(E)}{\operatorname{tr}(T)},~~
 H=E-\rho T,~~ 0<\rho<1.
\]
Fix $0<\delta<1$. The validation probes $g_1,\ldots,g_s$ are independent
standard Gaussian vectors. Write
\[
 Z_s(B)=\frac1s\sum_{i=1}^s g_i^\top Bg_i,~~
 \widehat q=\frac{Z_s(E)}{Z_s(T)}.
\]
The denominator is positive almost surely. For projection residuals,
$T=A^\top A$ and $E=A^\top(I-P)A$, so
\[
 g_i^\top Hg_i=\|(I-P)Ag_i\|_2^2-\rho\|Ag_i\|_2^2.
\]
All block statistics below are therefore computable from the cached
forward responses $Ag_i$.

For an ordered path, the entire continuation $(E_r)_{r\ge0}$ and all
construction randomness are independent of the validation probes, and
$E_{r+1}\preceq E_r$. One can first condition on that construction
randomness, prove each probability bound for the resulting deterministic
path, and then average. Parameters, partitions, and thresholds are
chosen before observing validation data. The path may be inspected at
sketch-dependent indices, provided it remains the same independent
complete path.

Let
\[
 r_\star=\min\{r:q_r\le\rho\},~~
 D_s=(\tau_s-r_\star)_+,
\]
where $\tau_s$ is the first acceptance position of the specified rule.
All distributions and expectations of $D_s$ below are unconditional.
In particular, they are not conditioned on the absence of an early
false acceptance. The definition of $r_\star$ includes the boundary
$q=\rho$, while the safety calibration controls acceptance even on
that boundary.

For the ratio rule, put $\ell(x)=\log(x/(1-x))$ and
\begin{equation}
 u_s(\delta)=2\operatorname{arcosh}(\delta^{-1/s}),~~
 \theta_s(\delta)
 =\frac{\rho}{\rho+e^{u_s(\delta)}(1-\rho)}.
 \label{app:stopping:ratio-threshold}
\end{equation}
The rule accepts when $\widehat q\le\theta_s(\delta)$. Since
$0<\theta_s(\delta)<\rho$, the log-ratio bound from
Section~\ref{sec:single_certification} gives acceptance probability at
most $\delta$ for every fixed candidate with $q\ge\rho$. For
$q\in\{0,1\}$ the sampled ratio equals $q$ almost surely, so these
endpoints cause no undefined-logarithm issue. Along an independent
ordered path, $\widehat q_r$ decreases and the unsafe-prefix argument
below extends the same probability bound to false stopping anywhere
on the path.

\subsection{Log-concavity and the exact block-sign calibration}
\label{app:stopping:sign-calibration}

For integers $L\ge1$ and $b\ge2$, let $s=Lb$ and partition the probes
into $L$ disjoint blocks $\mathcal I_1,\ldots,\mathcal I_L$ of size $b$.
Set
\begin{equation*}
 X_\ell(H)=\frac1b\sum_{i\in\mathcal I_\ell}g_i^\top Hg_i,
 ~~
 V_L(H)=\sum_{\ell=1}^L\mathbf1\{X_\ell(H)\ge0\}.
\end{equation*}
The cutoff and acceptance rule are
\begin{equation*}
 \begin{split}
 k_\delta(L)
 &=\max\left\{k\in\{0,\ldots,L-1\}:
       \mathbb P\{\operatorname{Bin}(L,e^{-1})\le k\}\le\delta\right\},\\
 \text{accept}&~\Longleftrightarrow~ V_L(H)\le k_\delta(L).
 \end{split}
\end{equation*}
If the defining set is empty, put $k_\delta(L)=-1$ and reject
identically. This convention is useful for safety but is excluded from
the stopping-time and allocation results, which require eventual
acceptance.

We use the standard marginalization consequence of the
Pr\'ekopa--Leindler inequality: integrating a log-concave function over
some coordinates preserves log-concavity \citep{saumard2014}. Applied to $f(x-y)h(y)$,
this shows that convolution preserves log-concavity of densities.
Applied to $f(x+t)\mathbf1\{t\ge0\}$, it shows that a log-concave
density has a log-concave survival function.

\begin{lemma}[Mass above the mean]
\label{app:stopping:mean-halfspace}
If $X$ has a nondegenerate log-concave density and a finite mean, then
$\mathbb P(X\ge\mathbb E X)\ge e^{-1}$.
\end{lemma}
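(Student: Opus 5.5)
The plan is to reduce to a one-dimensional statement about log-concave densities and then use the classical Grünbaum-type argument. We may take $X$ centered, $\mathbb E X=0$, with density $f=e^{-\varphi}$ for a convex $\varphi$, and aim to prove $\bar F(0)=\mathbb P(X\ge0)\ge e^{-1}$. As recalled just before the lemma, the survival function $\bar F(x)=\mathbb P(X\ge x)$ is log-concave. Hence $\psi=-\log\bar F$ is convex and nondecreasing on the interior of the support, with $\psi=0$ at the left endpoint (or as $x\to-\infty$).

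The key step compares $\bar F$ with an exponential tail that has the same value at the mean. Let $c=\psi(0)$, and suppose for contradiction that $c>1$. By convexity, on $x\ge0$ we get $\psi(x)\ge c+\psi'_+(0)x$. On $x\le0$, convexity also gives the chord bound: $\psi$ lies below the line through $(0,c)$ with slope $\psi'_-(0)$. Neither bound alone closes the argument, so I would use the standard extremal comparison in place of slope chasing. Take the one-sided exponential $Y$ with the same survival value at $0$ and the same mean. The convex $\psi$ crosses the linear function $\psi_Y$ at most once in the appropriate sense, so $\bar F-\bar F_Y$ changes sign at most once.

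With a single sign change, the identity $\mathbb E X=\int_0^\infty \bar F\,dx-\int_{-\infty}^0 F\,dx$ compares the means of $X$ and $Y$. Hence matching means forces $\bar F(0)\ge \bar F_Y(0)$. For a shifted exponential $Y=a+\mathrm{Exp}(\lambda)$ with mean $0$, we have $a=-1/\lambda$, so $\mathbb P(Y\ge0)=e^{-\lambda\cdot(1/\lambda)}=e^{-1}$. This gives the bound.

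A cleaner concrete route, which I would actually write, avoids a comparison variable. Using $\mathbb E X=0$,
\[
0=\mathbb E X=\int_0^\infty\bar F(x)\,dx-\int_{-\infty}^0(1-\bar F(x))\,dx .
\]
The plan is to bound both integrals using only convexity of $\psi$ and the tangent line at $0$. Let $\kappa=\psi'(0)>0$ be a supporting slope, so that $\psi(x)\ge c+\kappa x$ for all $x$. This gives $\int_0^\infty\bar F\le e^{-c}/\kappa$. On the negative side, $1-\bar F(x)\ge 1-e^{-c-\kappa x}$ wherever $c+\kappa x\ge0$, that is, on $[-c/\kappa,0]$. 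Integrating gives
\[
\int_{-\infty}^0(1-\bar F)\ge \frac{c}{\kappa}-\frac{1-e^{-c}}{\kappa}.
\]
Substituting both bounds into the mean identity yields $e^{-c}\ge c-1+e^{-c}$, so $c\le1$, i.e. $\bar F(0)\ge e^{-1}$.

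The main obstacle is justifying the supporting line: $\psi$ must be finite at $0$ and have a positive subgradient there. Nondegeneracy and the finite mean ensure $0$ lies in the interior of the support, where $\bar F(0)\in(0,1)$ and $\psi$ is convex, finite and strictly increasing. For $x$ outside the support, the inequality $\psi\ge c+\kappa x$ holds trivially, since $\psi=+\infty$ on the right and $1-\bar F=0\ge$ the negative quantity used only on $[-c/\kappa,0]$. Away from those regions, the bound $1-\bar F\ge0$ suffices.
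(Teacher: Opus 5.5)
Your second, ``concrete'' route is correct, and it is a genuinely different argument from the paper's. The paper never bounds tail integrals. It observes that $S(X)$, with $S(x)=\mathbb P(X\ge x)$, is uniform on $(0,1)$, so $\mathbb E\log S(X)=-1$. It then applies Jensen's inequality to the concave function $\log S$ to get $\log S(\mathbb E X)\ge-1$.

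Both proofs rest on a single supporting line of $\log S$ at the mean. Jensen amounts to integrating that line against the law of $X$. You instead exponentiate it and integrate against Lebesgue measure through the identity $\mathbb E X=\int_0^\infty\bar F\,dx-\int_{-\infty}^0(1-\bar F)\,dx$, and finish with an explicit calculus computation. The paper's version is shorter, since the constant $e^{-1}$ comes directly from $-\log S(X)\sim\operatorname{Exp}(1)$. Yours is more elementary, does not need the probability integral transform, and shows the mean-zero shifted exponential as the equality case of your two tangent bounds.

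Two clean-ups are worth making. First, drop the comparison-variable sketch. As written it is inconsistent: every mean-zero shifted exponential has $\mathbb P(Y\ge0)=e^{-1}$, so you cannot in general also match its survival value at $0$ to $\bar F(0)$.

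Second, the closing case analysis is unnecessary, and the phrase about a ``negative quantity'' on $[-c/\kappa,0]$ is off, because $1-e^{-c-\kappa x}\ge0$ there. The Pr\'ekopa--Leindler marginalization gives log-concavity of $\bar F$ on all of $\mathbb R$. Hence $\psi$ is convex as an extended-real function, and $\bar F(x)\le e^{-c-\kappa x}$ holds for every real $x$. In particular, the left endpoint of the support lies at or below $-c/\kappa$. Any subgradient $\kappa$ at the interior point $0$ is positive, since $\kappa\le0$ would force $\psi(x)\ge c$ for $x<0$, contradicting strict monotonicity.
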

\begin{proof}
Let $S(x)=\mathbb P(X\ge x)$. The function $S$ is log-concave. The
distribution of $X$ is continuous, so the probability integral transform
gives that $S(X)$ is uniform on $(0,1)$. Therefore
\[
 \mathbb E\log S(X)=\int_0^1\log t\,dt=-1.
\]
The mean is in the interior of the interval supporting a nondegenerate
integrable log-concave distribution. Jensen's inequality for the concave
function $\log S$ consequently gives
\[
 \log S(\mathbb E X)\ge\mathbb E\log S(X)=-1.
\]
Exponentiating proves the result.
\end{proof}

\begin{lemma}[Sign probability of a Gaussian block]
\label{app:stopping:quadratic-sign}
Let $H$ be a real symmetric matrix. For $b\ge2$ independent standard
Gaussian vectors, define $X_b=b^{-1}\sum_{i=1}^b g_i^\top Hg_i$.
If $\operatorname{tr}(H)\ge0$, then
$\mathbb P(X_b\ge0)\ge e^{-1}$. The constant cannot be increased
uniformly over matrix dimensions, symmetric $H$, and integers $b\ge2$.
\end{lemma}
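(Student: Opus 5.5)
Diagonalize \(H\) orthogonally; the Gaussian probes remain standard Gaussian, so
\[
bX_b=\sum_{k}\lambda_k W_k,\qquad W_k\sim\chi^2_b \text{ independent}.
\]
Here \(\sum_k\lambda_k=\operatorname{tr}H\ge0\), so \(\mathbb E X_b=\operatorname{tr}H\ge0\). The plan is to show that \(X_b\) has a nondegenerate log-concave density and then invoke Lemma~\ref{app:stopping:mean-halfspace}:
\[
\mathbb P(X_b\ge0)\ge\mathbb P(X_b\ge\mathbb E X_b)\ge e^{-1}.
\]
If \(H=0\), then \(X_b=0\) and the probability is one, so assume \(H\ne0\).

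The key observation is that for \(b\ge2\), the \(\chi^2_b\) density \(x^{b/2-1}e^{-x/2}\mathbf 1\{x>0\}\) is log-concave. The exponent \(b/2-1\ge0\), so \(\log\) of the density is concave on \((0,\infty)\) and equals \(-\infty\) off it. Scaling by a nonzero \(\lambda_k\), including a negative one, preserves log-concavity, since it is a linear change of variable. The terms with \(\lambda_k=0\) drop out. A finite convolution of log-concave densities is log-concave, by the Prékopa--Leindler marginalization recalled just above. Hence \(bX_b\) has a log-concave density, nondegenerate because at least one \(\lambda_k\neq0\), with finite mean. This is exactly where \(b\ge2\) enters: for \(b=1\) the \(\chi^2_1\) density blows up at zero and is not log-concave.

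For sharpness, I would exhibit \(H=\operatorname{diag}(1,-c)\) with \(c\le1\), or more simply let \(H\) be rank one positive plus a large negative isotropic part. The goal is to drive \(\mathbb P(X_b\ge0)\) toward \(e^{-1}\). The natural extremal case for the mean-halfspace bound is the exponential distribution, where \(\mathbb P(X\ge\mathbb E X)=e^{-1}\) exactly.

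Take \(b=2\), so \(\chi^2_2/2\) is standard exponential. Let \(H=\operatorname{diag}(1,-\epsilon I_N)\) with \(N\epsilon=1\), so \(\operatorname{tr}H=0\). Then
\[
X_2=\tfrac12 W_1-\epsilon\cdot\tfrac12\sum_{j=1}^N W'_j,
\]
where \(W_1/2\) is exponential with mean one, and the second term concentrates at \(-1\) by the law of large numbers as \(N\to\infty\). Therefore
\[
\mathbb P(X_2\ge0)\to\mathbb P(\operatorname{Exp}(1)\ge1)=e^{-1}.
\]
This shows the constant cannot be increased, even with \(b\) fixed at \(2\), and in the limit over dimension.

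The main obstacle is care with the log-concavity bookkeeping: degenerate scalings, and making sure the convolution has a genuine density, so that Lemma~\ref{app:stopping:mean-halfspace} applies. The limit in the sharpness example also needs a justification of convergence in distribution to a continuous law at the point \(1\), which follows from Slutsky's theorem.
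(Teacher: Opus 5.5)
Your proposal is correct and follows the paper's proof essentially step for step: the same diagonalization into a signed combination of independent $\chi^2_b$ variables, log-concavity of the shape-$b/2\ge1$ Gamma density preserved under scaling and convolution, the mean-halfspace lemma, and the same sharpness family $H=\operatorname{diag}(1,-\tfrac1N I_N)$ with $b=2$, giving $X_2\Rightarrow\operatorname{Exp}(1)-1$. One small point: drop the preliminary two-dimensional candidate $\operatorname{diag}(1,-c)$ with $c\le1$, because with $b=2$ it gives $\mathbb P(X_2\ge0)=1/(1+c)\ge 1/2$ and so cannot approach $e^{-1}$; the dimension must grow, as in the example you actually carry out.
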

\begin{proof}
If $H=0$, then $X_b=0$ and the probability is one. Otherwise,
orthogonal diagonalization and Gaussian rotation give
\[
 X_b\ \overset{d}=\ \sum_j\lambda_j\frac{Y_j}{b},
 ~~ Y_j\sim\chi_b^2\ \text{independently},
\]
where zero eigenvalues are omitted. A $\chi_b^2$ density is Gamma with
shape $b/2\ge1$, hence is log-concave. Scaling by a nonzero coefficient
preserves log-concavity; a negative coefficient merely reflects the
density. Convolution therefore shows that $X_b$ has a nondegenerate
log-concave density. Since $\mathbb E X_b=\operatorname{tr}(H)\ge0$,
Lemma~\ref{app:stopping:mean-halfspace} gives
\[
 \mathbb P(X_b\ge0)
 \ge\mathbb P(X_b\ge\mathbb E X_b)\ge e^{-1}.
\]
For uniform sharpness, take $b=2$ and
$H_n=\operatorname{diag}(1,-1/n,\ldots,-1/n)$ with $n$ negative
coordinates. Its trace is zero, and
\[
 X_2(H_n)\overset{d}=
 \frac{\chi_2^2}{2}-\frac{\chi_{2n}^2}{2n}
 \ \Longrightarrow\ \operatorname{Exp}(1)-1.
\]
The limiting distribution has no atom at zero and has probability
$e^{-1}$ of being nonnegative. This proves the claimed uniform
sharpness, without asserting sharpness for each fixed $b>2$.
\end{proof}

\begin{proof}[Proof of the safety statement in Proposition~\ref{prop:block_sign}]
For a fixed $q\ge\rho$,
$\operatorname{tr}(H)=\operatorname{tr}(T)(q-\rho)\ge0$.
The block statistics are independent and identically distributed. By
Lemma~\ref{app:stopping:quadratic-sign}, the probability $p$ of a
nonnegative block is at least $e^{-1}$. Thus
$V_L(H)\sim\operatorname{Bin}(L,p)$ stochastically dominates
$\operatorname{Bin}(L,e^{-1})$. The definition of $k_\delta(L)$ gives
\[
 \mathbb P\{V_L(H)\le k_\delta(L)\}\le\delta.
\]
The empty-cutoff case rejects deterministically. If $H=0$, all $L$
blocks are nonnegative, and every allowable cutoff is less than $L$,
so the rule also rejects.

For pathwise safety, condition on the independent construction
randomness. The contrast matrices $H_r=E_r-\rho T$ decrease in the
Loewner order. Hence every $X_\ell(H_r)$ and their count $V_L(H_r)$
are nonincreasing in $r$. The true errors $q_r$ are nonincreasing as
well. On any finite prefix $0,\ldots,J$, let $r_J$ be the last index
with $q_{r_J}\ge\rho$, if one exists. Acceptance at any such index is
equivalent to acceptance at $r_J$: the acceptance events are nested
along this unsafe prefix. The single-candidate bound is therefore a
bound of $\delta$ on the union of those events. If no unsafe index
exists, the union is empty.

As $J\to\infty$, these union events increase to acceptance at any
$q_r\ge\rho$ on the countable path. Continuity of probability from
below preserves the bound $\delta$. This argument does not assume a
last unsafe index on the entire infinite path. Averaging over the
conditioned construction randomness completes the proof. The same
argument proves pathwise safety of the ratio rule because its sampled
ratios decrease along the same path.
\end{proof}

\subsection{Finite-sample power without spectral inputs}
\label{app:stopping:power}

\begin{proposition}[Block-sign power]
\label{app:stopping:power-bound}
For a fixed candidate with $\operatorname{tr}(H)=-d<0$, put
$v_H^2=2\operatorname{tr}(H^2)$. Then
\begin{equation}
 \mathbb P\{\mathrm{reject}\}
 \le\mathbb P\!\left\{
   \operatorname{Bin}\!\left(L,
       \frac{v_H^2}{v_H^2+bd^2}\right)>k_\delta(L)\right\}.
 \label{app:stopping:power-binomial}
\end{equation}
For $0<\delta\le1/2$, there are choices of $L$ and $b$ giving both
errors at most $\delta$ with sufficient total budget
\begin{equation}
 s=O\!\left(\log(1/\delta)
       \left[1+\frac{\operatorname{tr}(H^2)}{
                          (\operatorname{tr}H)^2}\right]\right).
 \label{app:stopping:power-complexity}
\end{equation}
The quantities $d$ and $v_H$ characterize when a budget gives power;
they are not inputs to the block-sign decision rule.
\end{proposition}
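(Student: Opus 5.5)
The plan has two parts. First, a one-sided Chebyshev (Cantelli) bound on each block statistic gives the binomial domination \eqref{app:stopping:power-binomial}. Second, choosing $L$ of order $\log(1/\delta)$ and $b$ proportional to the variance ratio $v_H^2/d^2$ makes the per-block failure probability small enough that binomial concentration delivers power while the calibration of Proposition~\ref{prop:block_sign} still gives safety.

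\textbf{Binomial domination.} Fix the candidate. Each block statistic $X_\ell(H)$ has mean $\operatorname{tr}H=-d$. By orthogonal diagonalization, its variance is
\[
\operatorname{Var}X_\ell=\frac{2\operatorname{tr}(H^2)}{b}=\frac{v_H^2}{b}.
\]
Cantelli's inequality then gives
\[
\mathbb P\{X_\ell\ge0\}=\mathbb P\{X_\ell-\mathbb EX_\ell\ge d\}\le\frac{v_H^2/b}{v_H^2/b+d^2}=\frac{v_H^2}{v_H^2+bd^2}=:\pi .
\]
The blocks are independent and identically distributed, so $V_L(H)\sim\operatorname{Bin}(L,p)$ with $p\le\pi$. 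Binomial laws are stochastically monotone in the success probability. Rejection is the event $V_L>k_\delta(L)$, so its probability is at most $\mathbb P\{\operatorname{Bin}(L,\pi)>k_\delta(L)\}$, which is \eqref{app:stopping:power-binomial}.

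\textbf{Budget choice.} Safety holds for every $L,b$ by Proposition~\ref{prop:block_sign}, so only power needs work.
\begin{enumerate}[label=(\roman*)]
\item Pick $L=\lceil c_1\log(1/\delta)\rceil$ with an absolute constant $c_1$. By a Chernoff lower-tail bound for $\operatorname{Bin}(L,e^{-1})$, this $L$ guarantees $k_\delta(L)\ge\kappa L$ for a fixed fraction $\kappa\in(0,e^{-1})$, for instance $\kappa=e^{-1}/2$.
\item Choose $b=\max\{2,\lceil c_2 v_H^2/d^2\rceil\}$. This makes $\pi\le1/(1+c_2)\le\kappa/2$.
\item Apply a Chernoff upper-tail bound: $\mathbb P\{\operatorname{Bin}(L,\pi)>\kappa L\}\le\exp(-c_3L)$, which is at most $\delta$ once $c_1$ is large.
\end{enumerate}
Then
\[
s=Lb=O\bigl(\log(1/\delta)\bigl[1+\operatorname{tr}(H^2)/(\operatorname{tr}H)^2\bigr]\bigr),
\]
which is \eqref{app:stopping:power-complexity}. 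The restriction $\delta\le1/2$ ensures $L\ge1$ and that the cutoff set is nonempty once the constants are fixed.

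\textbf{Main obstacle.} The delicate step is showing that $k_\delta(L)$ grows linearly in $L$ with the correct constant. The defining condition is $\mathbb P\{\operatorname{Bin}(L,e^{-1})\le k\}\le\delta$, so I need a lower-tail Chernoff bound of the form
\[
\mathbb P\{\operatorname{Bin}(L,e^{-1})\le\kappa L\}\le\exp\bigl(-L\,\operatorname{kl}(\kappa,e^{-1})\bigr),
\]
and must pick $c_1\ge1/\operatorname{kl}(\kappa,e^{-1})$. The gap between $\kappa$ and $e^{-1}$ must also be balanced against the upper-tail exponent in step (iii). The cleanest route is to fix $\kappa=e^{-1}/2$ and $\pi\le e^{-1}/4$ and check both Kullback--Leibler exponents explicitly. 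These choices involve only absolute constants and no spectral input: $v_H$ and $d$ enter only the sufficient budget, not the rule.
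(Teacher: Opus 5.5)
Your proposal is correct and takes essentially the same route as the paper: Cantelli's inequality applied to each block, together with stochastic monotonicity of the binomial law, gives the domination bound. The paper likewise takes $L$ of order $\log(1/\delta)$, uses a lower-tail Chernoff bound to show that the cutoff grows linearly in $L$, chooses $b$ proportional to $v_H^2/d^2$, and applies an upper-tail Chernoff bound. The only difference is in the constants: the paper uses the fraction $1/4$ (so $k_\delta(L)\ge\lfloor L/4\rfloor$) and $b\ge\max\{2,7v_H^2/d^2\}$ to get $p\le1/8$, whereas you use $\kappa=e^{-1}/2$ and $\pi\le e^{-1}/4$.
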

\begin{proof}
For a symmetric Gaussian quadratic form,
$\operatorname{Var}(g^\top Hg)=2\operatorname{tr}(H^2)$. Consequently
\[
 \mathbb E X_\ell=-d,~~
 \operatorname{Var}(X_\ell)=v_H^2/b.
\]
Cantelli's inequality gives
\[
 p:=\mathbb P(X_\ell\ge0)
 \le\frac{v_H^2/b}{v_H^2/b+d^2}
 =\frac{v_H^2}{v_H^2+bd^2}.
\]
Stochastic monotonicity of binomial distributions proves
\eqref{app:stopping:power-binomial}.

For explicit sufficient choices, write
\[
 \operatorname{kl}(a\|p)
 =a\log(a/p)+(1-a)\log((1-a)/(1-p))
\]
for Bernoulli relative entropy and define the positive constants
\[
 d_0=\operatorname{kl}(1/4\|e^{-1}),~~
 d_1=\operatorname{kl}(1/4\|1/8).
\]
Choose integers satisfying
\begin{equation}
 L\ge\frac{\log(1/\delta)}{\min\{d_0,d_1\}},~~
 b\ge\max\{2,7v_H^2/d^2\}.
 \label{app:stopping:sufficient-blocks}
\end{equation}
A binomial Chernoff bound yields
\[
 \mathbb P\{\operatorname{Bin}(L,e^{-1})\le\lfloor L/4\rfloor\}
 \le e^{-Ld_0}\le\delta.
\]
Thus $k_\delta(L)\ge\lfloor L/4\rfloor$, in particular the cutoff is
nonnegative. The condition on $b$ gives $p\le1/8$. Rejection implies
$V_L>\lfloor L/4\rfloor$, hence $V_L/L>1/4$, and a second Chernoff
bound gives rejection probability at most $e^{-Ld_1}\le\delta$.
Safety holds by Proposition~\ref{prop:block_sign}.

Taking ceilings in~\eqref{app:stopping:sufficient-blocks} proves
\eqref{app:stopping:power-complexity}, with a universal implied
constant. Operationally, one may choose $L$ solely from $\delta$ and,
for a given budget $s$, use $b=\lfloor s/L\rfloor\ge2$, discarding any
remainder. The second inequality in
\eqref{app:stopping:sufficient-blocks} describes when that fixed budget
has the stated power. It does not require estimating either spectral
quantity to set the acceptance threshold.
\end{proof}

\subsection{The common spectral family and its exact block representation}
\label{app:stopping:family-setup}

For positive integers $M,N$, consider
\begin{equation*}
 A_{M,N}=\operatorname{diag}\left(
   \sqrt{\frac{1-\rho}{M}}\,I_M,
   \sqrt{\frac\rho N}\,I_N\right).
\end{equation*}
Its squared Frobenius norm is one. Let $P_r$ project onto the first
$r$ coordinates. This path first captures the initial $M$ directions
and then the $N$ tail directions individually. For $0\le r<M$,
\[
 q(P_r)=1-\frac{r(1-\rho)}M>\rho,
\]
and for $0\le j\le N$,
\begin{equation*}
 q(P_{M+j})=\rho(1-j/N).
\end{equation*}
Thus $r_\star=M$ exactly under the definition $q\le\rho$.
The path is an ordered coordinate path for all $M,N$. It is also the
truncated singular-vector path only when
$(1-\rho)/M\ge\rho/N$; no such additional ordering of singular values
is needed for the results below.

The delay results assume that every coordinate position is inspected
in order, or equivalently that $\tau_s$ is the first position on the
complete path satisfying the acceptance condition. In particular, all
tail positions $j=0,\ldots,N$ are inspected unless acceptance has already
occurred. Inspecting a subset retains the preceding safety guarantee,
but the exact delay constants below do not apply to arbitrary skipped
positions.

Throughout the remaining subsections, fix $\rho\in(0,1)$,
$\delta\in(0,1/2)$, and an integer $L$ for which
$k=k_\delta(L)\ge0$. In particular, $0\le k<L$. Both $L$ and $k$
remain fixed as $M,N$ and the block size vary. The sign rule is the
pure block-sign rule with this cutoff. The ratio rule uses
\eqref{app:stopping:ratio-threshold} at the same level $\delta$.
At the full-rank position, $E=0$, so every sign block equals a strictly
negative multiple of a positive Gaussian energy almost surely. Hence
$V_L=0\le k$. Also $\widehat q=0<\theta_s(\delta)$ for the ratio rule.
Both stopping times are therefore finite, with positive delays at most
$N$.

Number tail positions by $j=0,\ldots,N$ and write
$H_j=A_{M,N}^\top(I-P_{M+j})A_{M,N}-\rho A_{M,N}^\top A_{M,N}$.
Its eigenvalues are
\[
 -\frac{\rho(1-\rho)}M~(M\text{ times}),~~
 -\frac{\rho^2}N~(j\text{ times}),~~
 \frac{\rho(1-\rho)}N~(N-j\text{ times}).
\]
Consequently
\begin{align}
 \operatorname{tr}(H_j)&=-\rho j/N,
 \label{app:stopping:contrast-mean}\\
 \operatorname{tr}(H_j^2)
 &=\frac{\rho^2(1-\rho)^2}M
  +\frac{\rho^2\{(N-j)(1-\rho)^2+j\rho^2\}}{N^2}
 \le\rho^2\left(\frac{(1-\rho)^2}M+\frac1N\right).
 \label{app:stopping:contrast-second-moment}
\end{align}
In particular,
\begin{equation*}
 \operatorname{Var}(X_\ell(H_0))
 =\frac{2\rho^2(1-\rho)^2}{b}\left(\frac1M+\frac1N\right).
\end{equation*}

For each block $\ell$, introduce independent random variables
\[
 C_\ell\sim\frac{\chi_{Mb}^2}{Mb},~~
 Y_{\ell i}\sim\frac{\chi_b^2}{b},~ 1\le i\le N,
 ~~ R_\ell=\frac1N\sum_{i=1}^N Y_{\ell i}.
\]
Different blocks are independent. The dependence of $R_\ell$ on the
$Y_{\ell i}$ is retained. The exact Gaussian block distribution gives
\begin{equation}
 X_\ell(H_j)
 =\rho(1-\rho)(R_\ell-C_\ell)
  -\frac\rho N\sum_{i=1}^jY_{\ell i}.
 \label{app:stopping:block-process}
\end{equation}
The block statistic decreases strictly with $j$ almost surely. Define
its first strict-negative time by
\[
 T_{\ell,N}=\min\{0\le j\le N:X_\ell(H_j)<0\}.
\]
Acceptance requires at least $L-k$ strict-negative blocks, so the first
acceptance time when the tail is inspected is the $(L-k)$th smallest
of $T_{1,N},\ldots,T_{L,N}$. This time equals the unconditional positive
delay $(\tau_s-M)_+$ even if the rule is run before $M$: an acceptance
before $M$ forces acceptance at $M$ by monotonicity, and both quantities
are then zero. No conditioning on successful certification is used in
this identity.

\subsection{The pure sign rule: crossing limit and convergence of means}
\label{app:stopping:sign-limit-proof}

The following strengthened version of the sign part of
Theorem~\ref{thm:diffuse_delay} permits bounded block sizes. This extension
will also be needed to rule out bounded optimal budgets.

\begin{proposition}[Exact block-sign delay limit]
\label{app:stopping:sign-limit}
Assume $M/N\to\kappa\in(0,\infty)$ and let $b=b_N\ge2$ be integers
with $b_N=o(N)$. Put $s=Lb$, $r=L-k$, and let $Z_{(r)}$ be the $r$th
smallest of $L$ independent standard normal variables. Then
\begin{equation}
 \frac{D_s^{\mathrm{sign}}}{\sqrt{N/b}}
 \ \Longrightarrow\
 A_\kappa(Z_{(r)})_+,
 ~~ A_\kappa=(1-\rho)\sqrt{2(1+\kappa^{-1})}.
 \label{app:stopping:sign-distribution}
\end{equation}
The expectations converge as well. Equivalently,
\begin{equation}
 \mathbb E D_s^{\mathrm{sign}}
 \sim C_{\mathrm{sign}}\sqrt{N/s},~~
 C_{\mathrm{sign}}
 =(1-\rho)\sqrt{2L(1+\kappa^{-1})}
       \,\mathbb E[(Z_{(L-k)})_+].
 \label{app:stopping:sign-mean}
\end{equation}
The constant is finite and strictly positive.
\end{proposition}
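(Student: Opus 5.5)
The plan is to analyze each block's crossing time $T_{\ell,N}$ through the exact representation \eqref{app:stopping:block-process}. Set $c=\sqrt{N/b}$ and write
\[
X_\ell(H_j)=\rho(1-\rho)(R_\ell-C_\ell)-\frac{\rho j}{N}-\frac{\rho}{N}\sum_{i\le j}(Y_{\ell i}-1).
\]
The initial fluctuation $W_\ell:=(1-\rho)(R_\ell-C_\ell)\sqrt{bN}$ has mean zero and variance $2(1-\rho)^2(1+N/M)\to A_\kappa^2$. Since $R_\ell$ and $C_\ell$ are averages of many independent $\chi^2_b/b$ terms (in total $Nb$ and $Mb$ of them), the Lindeberg CLT gives $W_\ell\Rightarrow A_\kappa Z_\ell$ uniformly in $b\ge2$; this is the reason bounded block sizes are allowed. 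Independence across blocks then gives joint convergence to $L$ independent normals.

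Next, rescale time as $j=xc$. Multiplying the crossing condition by $\sqrt{bN}/\rho$, $X_\ell(H_j)<0$ becomes
\[
W_\ell<x+\sqrt{b/N}\sum_{i\le xc}(Y_{\ell i}-1).
\]
The random-walk remainder $\sqrt{b/N}\,S_{\lfloor xc\rfloor}$ has variance $(b/N)\cdot xc\cdot 2/b=2x/\sqrt{bN}\to0$. Doob's maximal inequality makes it uniformly small over $x\le K$. Hence $T_{\ell,N}/c\Rightarrow (A_\kappa Z_\ell)_+$ jointly in $\ell$, and the positive part records crossing at $j=0$. The correlation between $R_\ell$ and the $Y_{\ell i}$ with $i\le j$ affects only $j\ll N$ of $N$ terms, so it is negligible. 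Because the $(L-k)$th order statistic is a continuous map, and $D_s=T_{(L-k)}$ by the identity proved above, we obtain \eqref{app:stopping:sign-distribution}.

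For convergence of means, I would prove uniform integrability by bounding $\mathbb P\{D_s>yc\}\le L\,\mathbb P\{T_{\ell,N}>yc\}$ and showing this decays in $y$ uniformly in $N$. Writing $j=\lfloor yc\rfloor$, the event $T_{\ell,N}>j$ implies $X_\ell(H_j)\ge0$. Cantelli's inequality with \eqref{app:stopping:contrast-mean}--\eqref{app:stopping:contrast-second-moment} then gives the bound $v^2/(v^2+b\,d_j^2)$ with $v^2\asymp 1/N$ and $bd_j^2=b\rho^2j^2/N^2\asymp y^2/N$, which is of order $1/(1+y^2)$. This is not integrable enough for the first moment. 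I would therefore upgrade to a fourth-moment (or Bernstein) bound on $X_\ell(H_j)+\rho j/N$, since its centered part is a sum of independent scaled $\chi^2$ variables with variance $\asymp 1/(bN)$ and cumulants scaling appropriately. This yields a tail of order $y^{-4}$, which is uniformly integrable; the delay is also capped at $N$.

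The equivalent form \eqref{app:stopping:sign-mean} follows from $c=\sqrt{L}\sqrt{N/s}$. The constant is positive because $r=L-k\ge1$ and $Z_{(r)}$ has full support, and it is finite because Gaussian order statistics have all moments. I expect the main obstacle to be this uniform tail bound valid simultaneously over $b\ge2$ and $j\le N$, especially near $j\sim N$, where the drift term dominates and the dependence between $R_\ell$ and the early $Y_{\ell i}$ must be handled. The fallback is to exploit monotonicity in $j$ and bound only at the single index $j=\lfloor yc\rfloor$.
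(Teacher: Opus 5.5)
Your proof is correct. Its distributional part is the paper's argument: a chi-square CLT for the offset $\sqrt{Nb}(1-\rho)(R_\ell-C_\ell)$, valid for bounded $b$ because $Nb,Mb\to\infty$. Doob's $L^2$ inequality then makes the centered prefix $\sqrt{b/N}\sum_{i\le j}(Y_{\ell i}-1)$ negligible on the scale $j\asymp\sqrt{N/b}$. Monotonicity in $j$ turns this into crossing-time convergence, and independence across blocks plus continuity of sorting finishes.

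You depart from the paper only at uniform integrability, and the reason you give for departing is wrong. A bound $\mathbb P\{D_s/c>y\}\le C/(1+y^2)$, uniform in $N$, is integrable in $y$. It already gives uniform integrability of the first moment, since
$\mathbb E[(D_s/c)\mathbf{1}\{D_s/c>R\}]\le R\,\mathbb P\{D_s/c>R\}+\int_R^\infty\mathbb P\{D_s/c>y\}\,dy=O(1/R)$.
In fact it bounds $\sup_N\mathbb E(D_s/c)^p$ for every $p<2$; only an $L^2$ statement would need a faster tail. The paper finishes exactly this way. It applies Cantelli at the single index $j=\lfloor a_Nt\rfloor$ using $\operatorname{tr}(H_j)=-\rho j/N$ and the bound on $\operatorname{tr}(H_j^2)$. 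Markov's inequality on the count of nonnegative blocks (your union bound with factor $L$ works equally well) gives a $4C_1/t^2$ tail and then the $8C_1/R$ estimate.

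Your fourth-moment upgrade is still valid. The centered form $\sum_m\lambda_m(\chi^2_{b,m}/b-1)$ has fourth moment at most $48(\operatorname{tr}H_j^2)^2/b^2=O((bN)^{-2})$ uniformly in $b\ge2$, hence an $O(y^{-4})$ tail. It just buys nothing here.

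The obstacle you anticipate near $j\sim N$ also does not arise. The tail bound only uses $X_\ell(H_j)$ at one index, as a single Gaussian quadratic form. Its mean and variance come directly from the eigenvalues of $H_j$, so the dependence between $R_\ell$ and the removed $Y_{\ell i}$ never enters. The bound is simply zero for $j>N$.
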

\begin{proof}
Let $a_N=\sqrt{N/b}\to\infty$. Since $Nb\to\infty$ and
$Mb\to\infty$, the chi-square central limit theorem gives
\[
 \sqrt{Nb}(R_\ell-1)\Longrightarrow\mathcal N(0,2),
 ~~
 \sqrt{Nb}(C_\ell-1)\Longrightarrow\mathcal N(0,2/\kappa).
\]
These two variables are independent for each $\ell$, and different
blocks are independent. Therefore, jointly over the fixed $L$ blocks,
\begin{equation*}
 U_{\ell,N}:=\frac{\sqrt{Nb}}\rho X_\ell(H_0)
 =\sqrt{Nb}(1-\rho)(R_\ell-C_\ell)
 \ \Longrightarrow\ A_\kappa Z_\ell,
\end{equation*}
where the $Z_\ell$ are independent standard normals. This conclusion
does not require $b\to\infty$ because both degrees of freedom already
diverge through $M,N$.

By~\eqref{app:stopping:block-process},
\begin{equation*}
 \frac{\sqrt{Nb}}\rho X_\ell(H_j)
 =U_{\ell,N}-\sqrt{b/N}\sum_{i=1}^jY_{\ell i}.
\end{equation*}
The centered prefix sums are square-integrable martingales and
$\operatorname{Var}(Y_{\ell i})=2/b$. For fixed $R>0$, Doob's $L^2$
maximal inequality yields
\begin{equation}
\label{app:stopping:prefix-negligible}
 \mathbb E\max_{0\le j\le\lfloor Ra_N\rfloor}
 \left|\sqrt{b/N}\sum_{i=1}^j(Y_{\ell i}-1)\right|^2
 \nonumber
 \le
 4\frac bN\lfloor Ra_N\rfloor\frac2b
 \le\frac{8R}{\sqrt{Nb}}\longrightarrow0.
\end{equation}
For sufficiently large $N$ the displayed prefix stays within the tail,
since $a_N/N=1/\sqrt{Nb}\to0$. Also,
$\sqrt{b/N}\lfloor a_Nt\rfloor\to t$ uniformly on compact
$t$-intervals. Thus the scaled signed block process has a Gaussian
initial offset followed by deterministic drift $-t$. No independence
between the initial offset and the removed tail energies is needed:
\eqref{app:stopping:prefix-negligible} makes the centered prefix
negligible even with their exact dependence retained.

For an explicit hitting-time argument, fix $t>0$. For large $N$,
monotonicity gives
\[
 \{T_{\ell,N}/a_N\le t\}
 =\{X_\ell(H_{\lfloor a_Nt\rfloor})<0\}.
\]
The scaled statistic on the right converges to $A_\kappa Z_\ell-t$,
which has no atom at zero. The corresponding probability converges to
$\mathbb P(A_\kappa Z_\ell<t)$. Applying this argument jointly at
positive times $t_1,\ldots,t_L$, or using block independence, proves
\[
 (T_{1,N},\ldots,T_{L,N})/a_N
 \Longrightarrow A_\kappa((Z_1)_+,\ldots,(Z_L)_+).
\]
Positive continuity points suffice to determine this distribution on
the nonnegative orthant; its possible atom at zero is retained by the
positive part. Sorting is a continuous map on $\mathbb R^L$ and
commutes with a nondecreasing scalar function such as the positive
part. The exact order-statistic representation consequently proves
\eqref{app:stopping:sign-distribution}.

To pass to means, let $1\le j\le N$. By
\eqref{app:stopping:contrast-mean} and
\eqref{app:stopping:contrast-second-moment}, Cantelli's inequality gives
\begin{equation*}
 p_j:=\mathbb P\{X_\ell(H_j)\ge0\}
 \le\frac{2N^2}{bj^2}
       \left(\frac{(1-\rho)^2}M+\frac1N\right).
\end{equation*}
Here the Cantelli expression is further bounded by variance divided by
squared mean. Rejection at
position $j$ requires at least $k+1$ nonnegative blocks. Markov's
inequality on their count therefore gives
\begin{equation*}
 \mathbb P(D_s^{\mathrm{sign}}>j)
 \le\min\left\{1,\frac{Lp_j}{k+1}\right\}
 \le\min\left\{1,\frac{C_1N}{bj^2}\right\},
\end{equation*}
for $1\le j\le N$, with $C_1$ independent of $N,b,j$ for large $N$,
because $M/N$ is bounded away from zero. For $j>N$ the probability is
zero. Thus for $t\ge2$ and sufficiently large $N$,
$\lfloor a_Nt\rfloor\ge a_Nt/2$ and
\[
 \mathbb P(D_s^{\mathrm{sign}}/a_N>t)\le4C_1/t^2.
\]
For $R\ge2$, integrating this tail gives the explicit bound
\begin{align*}
 \mathbb E\left[
 \frac{D_s^{\mathrm{sign}}}{a_N}
 \mathbf1\{D_s^{\mathrm{sign}}/a_N>R\}\right]
 =
 R\,\mathbb P(D_s^{\mathrm{sign}}/a_N>R)
 +\int_R^\infty\mathbb P(D_s^{\mathrm{sign}}/a_N>t)\,dt
 \le\frac{8C_1}{R}.
\end{align*}
This proves uniform integrability, rather than inferring it from weak
convergence alone. Consequently the expectations in
\eqref{app:stopping:sign-distribution} converge. Since
$a_N=\sqrt L\sqrt{N/s}$, the resulting constant is exactly
$C_{\mathrm{sign}}$ in~\eqref{app:stopping:sign-mean}.
It is finite because $L$ is fixed and normal variables are integrable.
It is positive, for example, because all $L$ normals have a positive
probability of exceeding one.
\end{proof}

\subsection{The ratio rule on the same diffuse family}
\label{app:stopping:ratio-limit-proof}

\begin{proposition}[Exact ratio-rule delay limit]
\label{app:stopping:ratio-limit}
Assume $M/N\to\kappa\in(0,\infty)$ and use the ratio threshold
\eqref{app:stopping:ratio-threshold}. If $s=s_N\to\infty$ and
$s_N=o(N^2)$, then
\begin{equation}
 \frac{\sqrt s}{N}D_s^{\mathrm{ratio}}
 \longrightarrow C_{\mathrm{ratio}}
 =(1-\rho)\sqrt{8\log(1/\delta)}
 ~\text{in }L^2.
 \label{app:stopping:ratio-distribution}
\end{equation}
In particular,
$\mathbb E D_s^{\mathrm{ratio}}
 \sim C_{\mathrm{ratio}}N/\sqrt s$.
For every fixed integer $s\ge1$,
\begin{equation}
 D_s^{\mathrm{ratio}}/N
 \longrightarrow1-\theta_s(\delta)/\rho>0
 ~\text{in }L^2.
 \label{app:stopping:ratio-fixed-budget}
\end{equation}
\end{proposition}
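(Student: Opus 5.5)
The plan is to reduce the stopping time to a deterministic crossing plus a uniformly small perturbation, using the exact block structure of Appendix~\ref{app:stopping:family-setup} with a single block of size $s$. At tail position $j$ the estimate is $\widehat q_j=Z_s(E_j)/Z_s(T)$. We can write
\[
Z_s(T)=(1-\rho)C+\rho R,\qquad Z_s(E_{M+j})=\frac{\rho}{N}\sum_{i>j}Y_i,
\]
where $C\sim\chi^2_{Ms}/(Ms)$, $Y_i\sim\chi^2_s/s$ are independent, and $R=N^{-1}\sum_{i=1}^N Y_i$. The true error is $q_j=\rho(1-j/N)$. Since the sampled ratios decrease along the path, an early acceptance before $M$ forces acceptance at $M$, as in the sign case. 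Hence $D_s^{\mathrm{ratio}}$ equals the first $j\in\{0,\ldots,N\}$ with $\widehat q_j\le\theta_s(\delta)$, and this index is at most $N$ because $\widehat q_N=0$.

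\emph{Step 1 (threshold asymptotics).} From \eqref{app:stopping:ratio-threshold}, $\delta^{-1/s}-1=\log(1/\delta)/s+O(s^{-2})$ and $\operatorname{arcosh}(1+x)=\sqrt{2x}\,(1+O(x))$. These give $u_s=2\sqrt{2\log(1/\delta)/s}\,(1+o(1))$ and $\rho-\theta_s=\rho(1-\rho)u_s(1+o(1))$. So the deterministic crossing index $j_s:=N(1-\theta_s/\rho)$ satisfies $\sqrt s\,j_s/N\to(1-\rho)\sqrt{8\log(1/\delta)}=C_{\mathrm{ratio}}$.

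\emph{Step 2 (uniform perturbation bound).} Let $\varepsilon_N=\max_{0\le j\le N}|\widehat q_j-q_j|$. Because $q_j$ decreases by exactly $\rho/N$ per step, on the event $\{\varepsilon_N\le\epsilon\}$ the first crossing lies within $N\epsilon/\rho+1$ of $j_s$. This gives the deterministic bound $|D_s^{\mathrm{ratio}}-j_s|\le N\varepsilon_N/\rho+1$ whenever the shifted thresholds stay inside $[0,\rho]$, and $D\le N$ always. For the numerator I would write
\[
\frac{\rho}{N}\sum_{i>j}(Y_i-1)=\frac{\rho}{N}\Bigl[\sum_{i=1}^N(Y_i-1)-\sum_{i\le j}(Y_i-1)\Bigr],
\]
and apply Doob's $L^2$ maximal inequality to the prefix martingale, as in Proposition~\ref{app:stopping:sign-limit}. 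Using $\operatorname{Var}Y_i=2/s$, this gives $\mathbb E\max_j|\cdot|^2\lesssim 1/(sN)$. The denominator satisfies $\mathbb E(Z_s(T)-1)^2\lesssim 1/(sN)$ since $M\asymp N$. On $\{Z_s(T)\ge 1/2\}$ this yields $\varepsilon_N\lesssim(sN)^{-1/2}$ in $L^2$. The complement has probability at most $e^{-csN}$ by \eqref{app:reuse:quadratic-lower}, and there $\varepsilon_N\le1$ trivially.

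\emph{Step 3 (assembling the $L^2$ limit).} Multiplying the bound of Step 2 by $\sqrt s/N$ gives
\[
\Bigl\|\tfrac{\sqrt s}{N}(D-j_s)\Bigr\|_{L^2}\lesssim\sqrt s\,(sN)^{-1/2}+\sqrt s/N+\sqrt s\,e^{-csN/2}.
\]
Here the last term uses $\sqrt s\,D/N\le\sqrt s$. Each term tends to zero, the middle one exactly because $s=o(N^2)$. Together with Step 1 this proves \eqref{app:stopping:ratio-distribution}, and convergence of the expectations follows. For fixed $s$, Step 2 gives $\varepsilon_N\to0$ in $L^2$, and $j_s/N=1-\theta_s/\rho$ is fixed. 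Since $D/N\in[0,1]$, dominated convergence yields \eqref{app:stopping:ratio-fixed-budget}.

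I expect the main obstacle to be the uniform-in-$j$ control of the ratio's numerator jointly with its shared, dependent denominator. The centered numerators involve the same $Y_i$ as $R$, which is why I would handle them through the single martingale decomposition above rather than a union bound. I would also check that the boundary cases, where $j_s\pm N\varepsilon_N/\rho$ falls outside $[0,N]$, are covered by the trivial bound $D\le N$ on the small-probability event.
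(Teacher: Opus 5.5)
Your proof is correct, but it takes a somewhat different route from the paper's.

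\textbf{The paper's route.} The paper never controls the ratio $\widehat q_j$ directly. It multiplies through by the common positive denominator, so acceptance at tail position $j$ becomes the linear condition $B_j\ge h_N$. Here $B_j=N^{-1}\sum_{i\le j}Y_i$, and $h_N=(1-\theta_s)B_N-\theta_s(1-\rho)C_N/\rho$ is a random crossing level. The proof then has three parts:
\begin{itemize}
\item the deterministic bound $|j_N/N-(h_N)_+|\le\Delta_N+1/N$, where $\Delta_N=\max_j|B_j-j/N|$;
\item Doob's inequality to control $\Delta_N$;
\item the exact moments $\mathbb E h_N=1-\theta_s/\rho$ and $s\operatorname{Var}(h_N)\to0$.
\end{itemize}

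\textbf{Your route.} You keep the ratio and bound the uniform additive error $\varepsilon_N=\max_j|\widehat q_j-q_j|$. You then convert this into a delay perturbation around the deterministic index $j_s=N(1-\theta_s/\rho)$, using the exact slope $\rho/N$ of the true error.

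\textbf{Comparison.} Both arguments share Doob's inequality on the same prefix martingale, a deterministic crossing bound, and the threshold expansion. They differ in where the randomness of the denominator goes.
\begin{itemize}
\item The paper's linearization absorbs it into $h_N$. It therefore needs only second moments, with no lower bound on $Z_s(T)$ and no exceptional event.
\item Your version has to split on $\{Z_s(T)\ge1/2\}$ and bound the complement. This is harmless: even Chebyshev, giving $\mathbb P\{Z_s(T)<1/2\}\lesssim 1/(sN)$, would suffice, since you only need $\sqrt s\,\mathbb P\{Z_s(T)<1/2\}^{1/2}\to0$.
\item In exchange, your argument is modular. Any uniform estimation bound $\epsilon$ along a path whose true error drops linearly yields a delay perturbation of $N\epsilon/\rho$.
\item It also makes explicit the main text's heuristic: estimation fluctuations of order $(sN)^{-1/2}$ are negligible against the $s^{-1/2}$ calibration margin.
\end{itemize}

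\textbf{A small simplification.} Your caveat about the shifted thresholds staying inside $[0,\rho]$ is unnecessary. The bound $|D_s^{\mathrm{ratio}}-j_s|\le N\varepsilon_N/\rho+1$ holds for every realization. When $j_s\pm N\varepsilon_N/\rho$ falls outside $[0,N]$, the trivial bounds $0\le D_s^{\mathrm{ratio}}\le N$ already give the required inequality.
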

\begin{proof}
Let $Y_1,\ldots,Y_N\sim\chi_s^2/s$ be independent and let
$C_N\sim\chi_{Ms}^2/(Ms)$ be independent of them. Set
\[
 B_j=\frac1N\sum_{i=1}^jY_i,~~ B_0=0.
\]
At tail position $j$, the exact Gaussian ratio is
\begin{equation*}
 \widehat q(P_{M+j})
 =\frac{\rho(B_N-B_j)}{(1-\rho)C_N+\rho B_N}.
\end{equation*}
Abbreviate $\theta_s=\theta_s(\delta)$. Acceptance is equivalent to
$B_j\ge h_N$, where
\begin{equation*}
 h_N=(1-\theta_s)B_N-
       \frac{\theta_s(1-\rho)}\rho C_N.
\end{equation*}
Since
$B_N-h_N=\theta_s\{B_N+(1-\rho)C_N/\rho\}>0$ almost surely,
the first crossing
\[
 j_N=\min\{0\le j\le N:B_j\ge h_N\}
\]
exists. As explained for the sign rule, monotonicity makes this tail
crossing time equal to the unconditional positive delay
$D_s^{\mathrm{ratio}}$, including sample paths that accepted before
$M$.

Let $\Delta_N=\max_{0\le j\le N}|B_j-j/N|$. The following bound is
deterministic:
\begin{equation}
 \left|\frac{j_N}N-(h_N)_+\right|
 \le\Delta_N+\frac1N.
 \label{app:stopping:ratio-crossing-error}
\end{equation}
If $h_N\le0$, both $j_N$ and $(h_N)_+$ are zero. Otherwise, minimality
gives $B_{j_N-1}<h_N\le B_{j_N}$, and therefore
\[
 h_N-\Delta_N\le j_N/N<h_N+\Delta_N+1/N.
\]
For the centered prefix martingale, Doob's inequality gives
\begin{equation}
 \mathbb E\Delta_N^2
 \le4\operatorname{Var}(B_N)=\frac8{Ns}.
 \label{app:stopping:ratio-prefix-bound}
\end{equation}
The crossing level has mean and variance
\begin{align}
 \mathbb E h_N&=1-\theta_s/\rho,
 \label{app:stopping:ratio-level-mean}\\
 s\operatorname{Var}(h_N)
 &=\frac{2(1-\theta_s)^2}N
   +\frac2M\left(\frac{\theta_s(1-\rho)}\rho\right)^2
 \longrightarrow0.
 \label{app:stopping:ratio-level-variance}
\end{align}
The last limit is uniform over $s\ge1$ because $0<\theta_s<\rho$ and
$M/N\to\kappa>0$.

Put $\lambda=\log(1/\delta)>0$. For $s\to\infty$,
\[
 \sqrt{s}\,u_s(\delta)\longrightarrow\sqrt{8\lambda}.
\]
Expanding the error-scale threshold at $u=0$ gives
\[
 1-\theta_s/\rho=(1-\rho)u_s(\delta)+O(u_s(\delta)^2),
\]
so $\sqrt{s}\,\mathbb E h_N\to C_{\mathrm{ratio}}$. Together with
\eqref{app:stopping:ratio-level-variance}, this proves
$\sqrt{s}h_N\to C_{\mathrm{ratio}}$ in $L^2$.
The positive-part map is $1$-Lipschitz and
$C_{\mathrm{ratio}}>0$, so the same is true of
$\sqrt{s}(h_N)_+$. Multiplying
\eqref{app:stopping:ratio-crossing-error} by $\sqrt s$, using
\eqref{app:stopping:ratio-prefix-bound}, and using $\sqrt{s}/N\to0$
now proves~\eqref{app:stopping:ratio-distribution}. In this case
convergence of means follows from the stronger $L^2$ convergence.

Finally, for a fixed $s$, equations
\eqref{app:stopping:ratio-level-mean} and
\eqref{app:stopping:ratio-level-variance} give
$h_N\to1-\theta_s/\rho>0$ in $L^2$, while
\eqref{app:stopping:ratio-prefix-bound} gives $\Delta_N\to0$ in $L^2$.
The same crossing bound proves
\eqref{app:stopping:ratio-fixed-budget}.
\end{proof}

When the two rules are compared at a common budget $s=Lb\to\infty$
with fixed $L$ and $b=o(N)$, all conditions of both delay propositions
hold. Their means are precisely those stated in
\eqref{eq:diffuse_delay_asymptotics}. The sign statement itself permits
bounded $b$, whereas the ratio statement permits the larger growing
budget range $s=o(N^2)$.

\subsection{Global optimization over integer validation budgets}
\label{app:stopping:allocation-proof}

For fixed $c_{\mathrm v},c_{\mathrm b}>0$, define
\[
 \mathcal C_N^{\mathrm{sign}}(s)
 =c_{\mathrm v}s+c_{\mathrm b}\mathbb E D_s^{\mathrm{sign}},
 ~~
 \mathcal C_N^{\mathrm{ratio}}(s)
 =c_{\mathrm v}s+c_{\mathrm b}\mathbb E D_s^{\mathrm{ratio}}.
\]
The sign optimization is over
$\mathcal S_L=\{Lb:b\in\mathbb N,\ b\ge2\}$, with $L$ and
$k=k_\delta(L)$ fixed. The ratio optimization is over all positive
integers $s$. A minimum exists in each case: the cost is finite at
every allowed budget because $D_s\le N$, and it is at least
$c_{\mathrm v}s$, which diverges with $s$.

\begin{proposition}[Exact global allocations]
\label{app:stopping:allocation}
Let $C_{\mathrm{sign}}$ and $C_{\mathrm{ratio}}$ be as above and put
\[
 a_{\mathrm{sign}}=
 \left(\frac{c_{\mathrm b}C_{\mathrm{sign}}}{2c_{\mathrm v}}\right)^{2/3},
 ~~
 a_{\mathrm{ratio}}=
 \left(\frac{c_{\mathrm b}C_{\mathrm{ratio}}}{2c_{\mathrm v}}\right)^{2/3}.
\]
Every sequence of global integer minimizers satisfies
\begin{align}
 s_N^{\mathrm{sign},*}&\sim a_{\mathrm{sign}}N^{1/3},&
 \min_{s\in\mathcal S_L}\mathcal C_N^{\mathrm{sign}}(s)
 &\sim3c_{\mathrm v}a_{\mathrm{sign}}N^{1/3},
 \nonumber\\
 s_N^{\mathrm{ratio},*}&\sim a_{\mathrm{ratio}}N^{2/3},&
 \min_{s\in\mathbb N,\ s\ge1}\mathcal C_N^{\mathrm{ratio}}(s)
 &\sim3c_{\mathrm v}a_{\mathrm{ratio}}N^{2/3}.
 \label{app:stopping:ratio-optimum}
\end{align}
These statements keep the matrix family, coordinate path, target,
confidence level, and the respective rule fixed. They concern the
excess costs defined above, excluding construction of the initial $M$
directions, and are not minimax lower bounds over all possible
certificates or jointly adaptive budget-allocation schemes.
\end{proposition}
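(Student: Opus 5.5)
The plan is to sandwich each cost function between explicit envelopes valid over the whole allowed budget range, and then to locate the minimizer inside a window of scale \(N^{1/3}\) or \(N^{2/3}\). The envelopes are built from Propositions~\ref{app:stopping:sign-limit} and~\ref{app:stopping:ratio-limit}. For the sign rule, Proposition~\ref{app:stopping:sign-limit} gives \(\mathbb E D_s^{\mathrm{sign}}\sim C_{\mathrm{sign}}\sqrt{N/s}\) along any sequence \(b_N\ge2\) with \(b_N=o(N)\). The first step is to upgrade this to a statement uniform on windows. Arguing by contradiction along subsequences, one shows that for every \(\epsilon>0\) and every \(c>0\),
\[
\sup_{s\in\mathcal S_L,\ s\le cN}\left|\frac{\mathbb E D_s^{\mathrm{sign}}}{C_{\mathrm{sign}}\sqrt{N/s}}-1\right|\le\epsilon+o(1)
\]
fails only when \(s/N\not\to0\). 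Any sequence of budgets violating uniformity has a further subsequence with \(b_N=o(N)\) or with \(b_N\gtrsim N\), and the proposition covers the former. For the ratio rule the analogous uniformity holds over \(s\to\infty\), \(s=o(N^2)\), obtained the same way.

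Next I rule out budgets outside the natural window. For the sign rule, take \(s\ll N^{1/3}\), which includes bounded \(b\); this is why the strengthened proposition permits bounded \(b\). There the ratio of delay to \(\sqrt{N/s}\) is still asymptotically \(C_{\mathrm{sign}}\), so the cost is at least \((1-\epsilon)c_{\mathrm b}C_{\mathrm{sign}}\sqrt{N/s}\gg N^{1/3}\). For \(s\gg N^{1/3}\) the cost is at least \(c_{\mathrm v}s\gg N^{1/3}\); this needs no delay asymptotics at all, so very large \(b\) comparable to \(N\) is handled trivially.

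The ratio rule has three regimes. For fixed \(s\), equation~\eqref{app:stopping:ratio-fixed-budget} gives a delay of order \(N\), which is \(\gg N^{2/3}\). For \(s\to\infty\) with \(s\ll N^{2/3}\), the uniform asymptotics give a cost of order \(N/\sqrt s\gg N^{2/3}\). For \(s\gg N^{2/3}\), the linear term dominates. There is one gap: bounded but growing \(s\) must be linked to the fixed-\(s\) statement. I would close it with a monotone lower bound, observing that \(\mathbb E D_s\ge N\,\mathbb E(h_N)_+-N\,\mathbb E\Delta_N-1\) by~\eqref{app:stopping:ratio-crossing-error}. Combined with~\eqref{app:stopping:ratio-level-mean}, this gives a lower bound of order \(N(1-\theta_s/\rho)\) uniformly in \(s\), and \(1-\theta_s/\rho\gtrsim s^{-1/2}\) for all \(s\ge1\).

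Inside the window, write \(s=xN^{1/3}\) (respectively \(s=xN^{2/3}\)). After division by \(N^{1/3}\) (respectively \(N^{2/3}\)), the normalized cost converges uniformly on compact \(x\)-sets to \(\phi(x)=c_{\mathrm v}x+c_{\mathrm b}C x^{-1/2}\). This function has a unique minimizer \(x^*=a\), since \(\phi'(x)=0\) gives \(x^{3/2}=c_{\mathrm b}C/(2c_{\mathrm v})\), and its minimum value is \(3c_{\mathrm v}a\). The lattice \(\mathcal S_L\) has spacing \(L\), and the integers have spacing \(1\); both are negligible at these scales, so the discrete minimum converges to \(\min\phi\). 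Uniform convergence together with strict unimodality then forces every sequence of minimizers to satisfy \(x_N\to a\).

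I expect the main obstacle to be the uniformity step: the propositions are stated along sequences rather than uniformly, so the subsequence argument has to cover all regimes without gaps. This matters especially at the interface between bounded and growing \(s\) for the ratio rule, where the explicit lower bound above is what I would try first.
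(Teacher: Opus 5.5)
Your proposal is correct and is essentially the paper's argument: feasible budgets of order $N^{1/3}$ (resp.\ $N^{2/3}$) bound the minimum via Propositions~\ref{app:stopping:sign-limit} and~\ref{app:stopping:ratio-limit}, and the term $c_{\mathrm v}s$ excludes larger budgets. The bounded-$b$ case of the sign proposition and the fixed-$s$ limit \eqref{app:stopping:ratio-fixed-budget} exclude smaller budgets, and the unique minimizer of $c_{\mathrm v}x+c_{\mathrm b}Cx^{-1/2}$ then identifies every subsequential limit.

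The only real variation is the ratio rule's small-budget regime. The paper simply extracts a constant subsequence from any bounded sequence of minimizers, so the ``bounded but growing'' interface is not an actual gap. Your uniform bound $\mathbb E D_s^{\mathrm{ratio}}\ge N(1-\theta_s/\rho)-\sqrt{8N/s}-1$, obtained from \eqref{app:stopping:ratio-crossing-error}, Jensen, and \eqref{app:stopping:ratio-prefix-bound}, is also valid. It disposes of all budgets $s\le\epsilon N^{2/3}$ at once, since $1-\theta_s/\rho\gtrsim s^{-1/2}$ uniformly in $s\ge1$.
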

\begin{proof}
First consider the sign rule. For any fixed $x>0$, select
$s_N\in\mathcal S_L$ with $s_N/N^{1/3}\to x$; rounding to a multiple
of fixed $L$ has asymptotically negligible effect. Then
$b_N=s_N/L\to\infty$ and $b_N=o(N)$, so
Proposition~\ref{app:stopping:sign-limit} gives
\begin{equation}
 \frac{\mathcal C_N^{\mathrm{sign}}(s_N)}{N^{1/3}}
 \longrightarrow f_{\mathrm{sign}}(x)
 :=c_{\mathrm v}x+c_{\mathrm b}C_{\mathrm{sign}}x^{-1/2}.
 \label{app:stopping:sign-limiting-cost}
\end{equation}
In particular, the global minimum is $O(N^{1/3})$. For any minimizing
sequence, its validation cost implies
$s_N^{\mathrm{sign},*}=O(N^{1/3})$, hence
$b_N^*=s_N^{\mathrm{sign},*}/L=o(N)$.

The block sizes must also diverge. Otherwise a bounded subsequence
of the integer $b_N^*$ has a constant further subsequence. The
bounded-$b$ case of Proposition~\ref{app:stopping:sign-limit} gives
$\mathbb E D_{Lb_N^*}^{\mathrm{sign}}=\Theta(\sqrt N)$ on that
subsequence, with a positive constant. This contradicts the feasible
$O(N^{1/3})$ cost. The mean asymptotic thus applies along every
minimizing sequence.

Write $x_N=s_N^{\mathrm{sign},*}/N^{1/3}$. It is bounded above. If a
subsequence had $x_N\to0$, the mean asymptotic would imply
\[
 \frac{\mathcal C_N^{\mathrm{sign}}(s_N^{\mathrm{sign},*})}{N^{1/3}}
 \ge c_{\mathrm b}(C_{\mathrm{sign}}+o(1))x_N^{-1/2}
 \longrightarrow\infty,
\]
again contradicting the feasible bound. Thus $x_N$ stays in a compact
subset of $(0,\infty)$. Along any subsequence with $x_N\to x$, the
normalized cost converges to $f_{\mathrm{sign}}(x)$. Comparing with
the feasible sequences in~\eqref{app:stopping:sign-limiting-cost}
forces $x$ to minimize $f_{\mathrm{sign}}$. This limiting function has
the unique minimizer
\[
 x=a_{\mathrm{sign}}
 =\left(\frac{c_{\mathrm b}C_{\mathrm{sign}}}{2c_{\mathrm v}}\right)^{2/3}.
\]
Every subsequential limit is therefore $a_{\mathrm{sign}}$, proving
the budget asymptotic. Since
$f_{\mathrm{sign}}(a_{\mathrm{sign}})
 =3c_{\mathrm v}a_{\mathrm{sign}}$, the cost asymptotic follows.

For the ratio rule, use instead feasible integers $s_N\sim xN^{2/3}$.
Proposition~\ref{app:stopping:ratio-limit} gives
\begin{equation*}
 \frac{\mathcal C_N^{\mathrm{ratio}}(s_N)}{N^{2/3}}
 \longrightarrow f_{\mathrm{ratio}}(x)
 :=c_{\mathrm v}x+c_{\mathrm b}C_{\mathrm{ratio}}x^{-1/2}.
\end{equation*}
The minimum is $O(N^{2/3})$, and every minimizing sequence obeys
$s_N^{\mathrm{ratio},*}=O(N^{2/3})=o(N^2)$.
It cannot remain bounded on a subsequence: a constant integer further
subsequence would have $\mathbb E D_s^{\mathrm{ratio}}=\Theta(N)$ by
\eqref{app:stopping:ratio-fixed-budget}, contradicting that feasible
cost. Thus $s_N^{\mathrm{ratio},*}\to\infty$, and the mean asymptotic
in Proposition~\ref{app:stopping:ratio-limit} applies to every minimizing
sequence.

The normalized widths $y_N=s_N^{\mathrm{ratio},*}/N^{2/3}$ are bounded
above and cannot approach zero, since their normalized delay costs
would then diverge as
$c_{\mathrm b}(C_{\mathrm{ratio}}+o(1))/\sqrt{y_N}$.
The same compact-subsequence comparison now forces every limit to be
the unique minimizer
\[
 y=a_{\mathrm{ratio}}
 =\left(\frac{c_{\mathrm b}C_{\mathrm{ratio}}}{2c_{\mathrm v}}\right)^{2/3}
\]
of $f_{\mathrm{ratio}}$. Evaluating this function there gives
$3c_{\mathrm v}a_{\mathrm{ratio}}$, completing
\eqref{app:stopping:ratio-optimum}.
\end{proof}

The two global allocation conclusions establish the cost and budget
orders in Theorem~\ref{thm:diffuse_delay}. Their proofs first locate all
minimizers in the appropriate asymptotic regime; simply differentiating
a local delay approximation would not establish these global claims.
When $M\asymp N$, constructing the initial $M$ directions can dominate
the reported excess cost and must be restored when discussing total
runtime or total construction complexity.

\subsection{Transfer to mean-square output error}
\label{app:stopping:functional}

Let the input second moment $K\succeq0$ be known and put
$B=AK^{1/2}\ne0$, as in Corollary~\ref{cor:output_certification}.
For each candidate define $T_K=B^\top B$, $E_{K,r}=B^\top(I-P_r)B$,
and $H_{K,r}=E_{K,r}-\rho T_K$. The block statistic is computed by
replacing each cached response $Ag_i$ with $A(K^{1/2}g_i)=Bg_i$.
Each such response still costs one forward query to $A$.
The safety and power proofs above apply to these matrices directly;
their assumptions use positivity, ordering, and Gaussian probes for
the effective operator $B$. They impose no Gaussian assumption on
the actual input distribution. The delay and allocation conclusions
apply when this effective operator and its projection path have the
spectral-family form analyzed above. This is a condition on $B$, and
does not assert the same delay on every matrix for each prescribed
$K$. The choice $K=I$ recovers the stated diffuse family exactly.

\section{Experimental Details and Additional Results}
\label{app:experiments}

This appendix specifies the synthetic operators, certification rules,
sampling procedures, and statistical conventions used in the numerical
experiments. All random candidates are fixed independently of validation.
The simulations evaluate the relative squared Frobenius criterion using
the exact distribution of Gaussian response statistics. Large operators
need not be materialized, and computational timings are not used as a
measure of certification cost. The width experiment varies effective
dimension together with family size; a separate repeated-check
experiment isolates reuse along fixed paths. All logarithms are natural
unless a base is explicitly indicated.

\subsection{Single-candidate confidence exponent}
\label{app:exp_single}

For $\Delta\in\{1,2\}$, we evaluate the lower bound
$p_s(\Delta/2)=\mathbb P\{F_{s,s}\le e^{-\Delta/2}\}$ and the upper bound
$\exp(-sJ)$, where $J=\log\cosh(\Delta/4)$.
The query grid contains 81 geometrically spaced values from 8 to 4096,
rounded to integers, together with $2^j$ for $j=3,\ldots,12$.
Removing duplicates leaves 87 budgets per value of $\Delta$.
These calculations are deterministic and require no Monte Carlo trials.

For a numerical cross-check, the same tail probability is computed from
the regularized incomplete beta function:
\[
    p_s(\Delta/2)
    =I_{(1+e^{\Delta/2})^{-1}}(s/2,s/2).
\]
The maximum discrepancy between the two log-probability evaluations is
$3.98\times10^{-13}$ over the full grid.
Figure~\ref{fig:exp_single_supp} displays the finite-query bounds and
their normalized exponents. At $s=4096$, the normalized exponents are
$1.029007$ for $\Delta=1$ and $1.008754$ for $\Delta=2$.
Their convergence to one concerns the leading exponential rate;
it does not imply that the two finite-query risk bounds have ratio
converging to one.

\begin{figure}[htbp]
    \centering
    \includegraphics[width=0.8\linewidth]{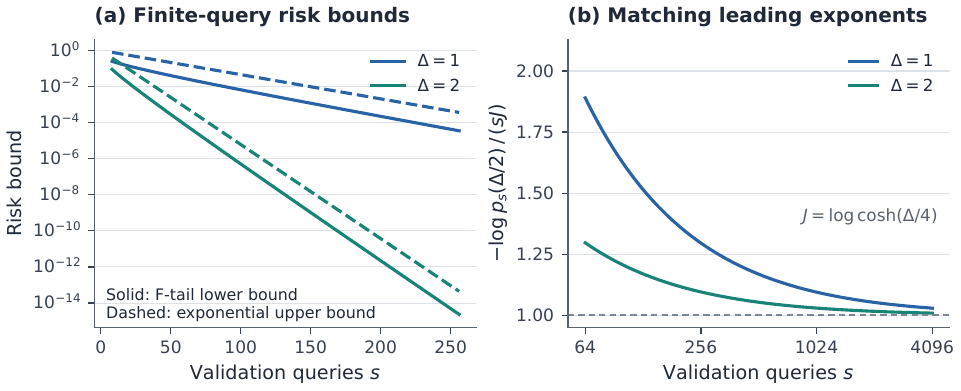}
    \caption{Single-candidate bounds.
    Left: the F-tail lower bound (solid) and exponential upper bound
    (dashed). Right: $-\log p_s(\Delta/2)/(sJ)$, with reference value one.
    The right panel displays budgets $s\ge64$ for readability.}
    \label{fig:exp_single_supp}
\end{figure}

\subsection{The square-root logarithmic width mechanism}
\label{app:exp_width_mechanism}

This experiment varies the effective dimension together with the
number of fixed candidates. It measures simultaneous decisions made by
the Gaussian ratio certificate; it does not experimentally establish a
lower bound for arbitrary adaptive query algorithms.

\paragraph{A fixed finite family with exponentially many paths.}
Let $\mathcal B_d$ be the set of all symmetric $d\times d$ matrices
with zero diagonal and off-diagonal entries in $\{-1,1\}$. The set
contains $W=2^{d(d-1)/2}$ matrices and is fixed before validation.
For every $B\in\mathcal B_d$, define
\[
 D_B=\frac{B}{\max\{c\sqrt d,\|B\|_{\mathrm{op}}\}},~~
 K_{B,0}=\rho I_d+aD_B,~~ K_{B,1}=K_{B,0}-gI_d,
\]
where $c=3$, $a=1/4$, $\rho=1/2$, and $g=0.01$.
Set $T=I_d/d$ and $E_{B,j}=K_{B,j}/d$.
Since $\|D_B\|_{\mathrm{op}}\le1$ and $\operatorname{tr}D_B=0$,
$0\preceq E_{B,1}\preceq E_{B,0}\preceq T$ and the true ratios
are $q_{B,0}=\rho$ and $q_{B,1}=\rho-g$.
Each $B$ therefore defines a two-candidate ordered path containing
both required decision boundaries. Distinct $B$ give distinct
$E_{B,0}$ of the same trace, hence an antichain of size $W$.
Together with the displayed chain cover, this proves that the full
family has width exactly $W$.

\paragraph{Realization by one operator and nested projections.}
The energy construction corresponds to actual orthogonal output
projections, rather than to arbitrary residual quadratic forms.
Let $J=[I_d,0,0]^\top\in\mathbb R^{3d\times d}$ and
$A=J/\sqrt d$. For each $B$, the matrix
\[
 V_B=\begin{bmatrix}
 K_{B,1}^{1/2}\\ \sqrt g I_d\\ (I_d-K_{B,0})^{1/2}
 \end{bmatrix}
\]
is an isometry. Extend it to an orthogonal matrix $O_B$ with
$O_BJ=V_B$, and put
$R_{B,0}=O_B^\top\operatorname{diag}(I_d,I_d,0)O_B$ and
$R_{B,1}=O_B^\top\operatorname{diag}(I_d,0,0)O_B$.
Then $R_{B,1}\preceq R_{B,0}$ and
$A^\top R_{B,j}A=E_{B,j}$.
The approximation projectors are $P_{B,j}=I_{3d}-R_{B,j}$.
An arbitrarily large ambient input dimension can be introduced by
replacing $A$ with $AV^\top$, where $V^\top V=I_d$ is fixed
independently of validation; this leaves all simulated statistics
unchanged. The construction illustrates the behavior of Gaussian
validation, not oracle hardness of this particular structured operator.

\paragraph{An exactly checkable maximum without enumeration.}
Generate the actual Gaussian sample covariance
$S=GG^\top/s$, with $G\in\mathbb R^{d\times s}$ standard Gaussian.
Both positions of path $B$ have the same estimation offset,
\[
 \widehat q_{B,j}-q_{B,j}
 =\frac{a\operatorname{tr}(D_BS)}{\operatorname{tr}S}.
\]
Write $M(S)$ for the maximum absolute offset over $\mathcal B_d$.
Let $B_*(S)$ have off-diagonal entries
$\operatorname{sign}(S_{ij})$ and zero diagonal, assigning either
sign at zero, an event of probability zero. Deterministically,
\[
 U(S)\min\left\{1,\frac{c\sqrt d}{\|B_*(S)\|_{\mathrm{op}}}\right\}
 \le M(S)\le U(S),~~
 U(S)=\frac{a\sum_{i\ne j}|S_{ij}|}
              {c\sqrt d\operatorname{tr}S}.
\]
The upper bound follows by the triangle inequality and the lower
bound by evaluating the member $B_*(S)$ of the fixed family.
If $\|B_*(S)\|_{\mathrm{op}}\le c\sqrt d$, the bounds coincide
and give the exact maximum. Selecting its maximizing index after
validation does not change the pre-existing candidate family.
The family contains $B$ and $-B$. Thus, for the midpoint rule
$\widehat q\le\rho-g/2$, any incorrect required decision occurs
exactly when $M(S)\ge g/2$, up to zero-probability ties.

Every draw retains both bounds and checks the attainment condition.
If attainment were to fail, the implementation would retain an
interval for the error event rather than discard or replace that draw.
All 749568 draws across the experiments below satisfy attainment;
the largest observed ratio
$\|B_*(S)\|_{\mathrm{op}}/(c\sqrt d)$ is $0.934712$.
An additional exhaustive check at $d=4$, with all 64 matrices and
100 direct Gaussian sketches, verifies the maximum formula.
The same implementation explicitly verifies the projection dilation.

\paragraph{Why the construction exposes the width mechanism.}
There are order $d^2$ off-diagonal entries. Their typical individual
fluctuation is of order $s^{-1/2}$, while the denominator in $U(S)$
has scale $d^{3/2}$. This gives the error scale
$\sqrt{d/s}\asymp(\log W)^{1/4}/\sqrt s$.
Meeting a fixed additive gap therefore suggests a budget of order
$d/g^2\asymp\sqrt{\log W}/g^2$.
As a distributional check, the upper envelope has the exact mean
\[
 \mathbb E U(S)=
 \frac{2a(d-1)}{c\sqrt{\pi d}\,s}
 \frac{\Gamma((s+1)/2)}{\Gamma(s/2)}.
\]
Indeed, $\operatorname{tr}(GG^\top)$ is independent of the normalized
Gaussian direction, and conditioning on one row gives
$\mathbb E|(GG^\top)_{ij}|=
2\Gamma((s+1)/2)/(\sqrt\pi\Gamma(s/2))$.
The mean identity concerns $U(S)$ unconditionally; it is not a claim
that the attainment condition holds for every possible sketch.

\paragraph{Sampling and budget selection.}
Use $d\in\{8,12,16,24,32,48,64,96,128\}$.
The fixed-budget experiment uses $s=4096$ and 4096 independent
trials per dimension. The covariance is generated by the exact
Bartlett factorization: for lower triangular $L$, its subdiagonal
entries are independent standard normals and
$L_{ii}^2\sim\chi^2_{s-i+1}$ independently; set $S=LL^\top/s$.
Every budget satisfies $s\ge d$. Neither asymptotic error nor delay
laws are used to generate observations.

An initial dimension-independent budget grid consists of the distinct
integers $\lceil64(1.18)^j\rceil$, $j=0,\ldots,37$, retaining $s\ge d$.
Each budget receives 768 screening trials. Select the first whose
empirical upper failure rate is at most $0.1$ and evaluate it and
its preceding grid point on 4096 fresh trials.
This coarse grid revealed a transition too sharp to resolve at factor
$1.18$, so a documented refinement uses 17 logarithmically spaced,
rounded budgets between those two grid points.
All original observations are retained.
Each refined budget receives 1536 fresh screening trials; select the
smallest whose pointwise Wilson 95\% upper endpoint is at most $0.1$.
That budget and its immediate predecessor receive 8192 new evaluation
trials each. The refinement does not use these evaluation outcomes.
Pointwise screening intervals are not simultaneous guarantees for
the selected budget.

\begin{table}[t]
\centering
\caption{Width experiment: independently evaluated, screening-selected
Gaussian budgets. Error intervals are pointwise Wilson 95\% intervals
conditional on the selected budget. Here $\log_2 W=d(d-1)/2$.}
\label{tab:width_mechanism}
\small
\begin{tabular}{rrrrr}
\toprule
$d$ & $\log_2 W$ & Selected $s$ & Error (\%) & Interval (\%)\\
\midrule
8   & 28   & 1549  & 8.997  & [8.396, 9.636] \\
12  & 66   & 2295  & 7.715  & [7.157, 8.313] \\
16  & 120  & 2941  & 10.718 & [10.066, 11.406] \\
24  & 276  & 4402  & 8.533  & [7.947, 9.157] \\
32  & 496  & 5881  & 6.348  & [5.840, 6.896] \\
48  & 1128 & 8714  & 6.519  & [6.004, 7.074] \\
64  & 2016 & 11521 & 6.775  & [6.251, 7.340] \\
96  & 4560 & 17247 & 4.626  & [4.192, 5.103] \\
128 & 8128 & 23040 & 2.319  & [2.015, 2.668] \\
\bottomrule
\end{tabular}
\end{table}

The reported budgets are selected from a finite grid, rather than
being exact minimum budgets at risk $0.1$.
Independent evaluation risks range from $2.32\%$ to $10.72\%$;
in particular, the point estimate at $d=16$ exceeds the screening
target and is retained. The steepening transition makes even the
refined grid conservative at larger dimensions.
Over the largest five dimensions, ordinary least-squares slopes
against $\log W$ in log--log coordinates are $0.488268$ for selected
budgets, $0.256384$ for mean fixed-budget maximum error, and
$0.244699$ for its 90th percentile.
The corresponding slopes over the largest four dimensions are
$0.492623$, $0.254907$, and $0.245554$; fits over all dimensions
are also retained.
These are descriptive exponents, not confidence intervals.
The reference power curves are anchored at $d=32$ and are not
theoretical finite-sample thresholds.

\begin{figure}[t]
    \centering
    \includegraphics[width=\linewidth]{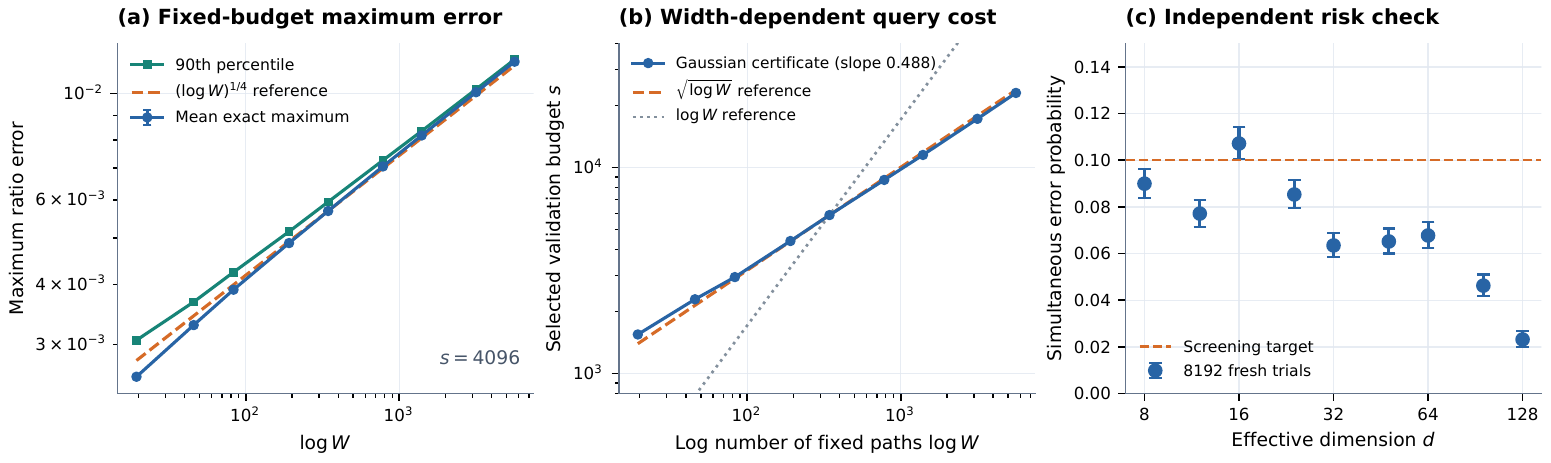}
    \caption{The width mechanism on fixed finite families.
    (a) Mean and 90th-percentile maximum error at $s=4096$;
    mean error bars show $\pm2$ Monte Carlo standard errors.
    (b) Selected query budgets compared with square-root logarithmic
    and logarithmic reference slopes.
    (c) Fresh evaluation of the selected budgets, with pointwise Wilson
    95\% intervals and the screening target marked.
    The family width is $W=2^{d(d-1)/2}$ throughout.}
    \label{fig:width_mechanism}
\end{figure}

\subsection{Validation reuse across nested paths}
\label{app:exp_reuse}

\paragraph{Candidate families.}
Fix effective dimension $d=24$, ambient input dimension $n=65536$,
$\beta=0.4$, $\rho=0.5$, and $g=0.01$.
For each family, generate 4096 independent rank-$d/2$ orthogonal
projections $Q_i=U_iU_i^\top$, where $U_i$ is obtained by reduced QR
factorization of a $d\times(d/2)$ standard Gaussian matrix.
All projections are generated before any validation samples.
Widths $W\in\{1,4,16,64,256,1024,4096\}$ use prefixes of this fixed family.

For $J_{\mathrm{chk}}\in\{4,32,256\}$, the checked error levels consist of
$J_{\mathrm{chk}}-2$ equally spaced points in $[0.25,0.75]$, together
with $\rho=0.5$ and $\rho-g=0.49$, sorted in decreasing order.
Their ordered union has $K=284$ levels $q_1>\cdots>q_K$.
Set $a_k=q_k-\beta/2$ and define the effective energy matrices
\[
    T_{\mathrm{eff}}=I_d/d,
    ~~
    E_{i,k}^{\mathrm{eff}}=(a_kI_d+\beta Q_i)/d.
\]
They satisfy $0\preceq E_{i,k}^{\mathrm{eff}}\preceq T_{\mathrm{eff}}$
and have true relative error $q_k$.

\paragraph{Realization by nested output projections.}
The energy matrices above arise from one fixed operator and actual
orthogonal output projections. Let $a_1=0.55$, $a_K=0.05$, and form
the nonnegative block weights
\[
    w=(\beta,a_K,a_1-a_2,\ldots,a_{K-1}-a_K,1-\beta-a_1).
\]
These weights sum to one.
Construct $A_{\mathrm{eff}}$ by vertically stacking
$\sqrt{w_t/d}\,I_d$ for $t=1,\ldots,K+2$.
Fix any $V\in\mathbb R^{n\times d}$ with $V^\top V=I_d$, independently
of validation, and set $A=A_{\mathrm{eff}}V^\top$.
The output dimension is $(K+2)d=6864$.

On path $i$, the residual projection in the first block is $Q_i$;
the second block remains residual, and the last block is always captured.
The intermediate difference blocks are captured successively.
At level $k$, the blocks with weights $a_t-a_{t+1}$ for $t<k$ have
been captured and those for $t\ge k$ remain residual.
The resulting approximation projections $P_{i,k}$ are orthogonal
and increase along each path, with
\[
    A^\top A=VT_{\mathrm{eff}}V^\top,
    ~~
    A^\top(I-P_{i,k})A=VE_{i,k}^{\mathrm{eff}}V^\top.
\]
Different paths generally do not commute. At a common level, distinct
residual energy matrices have equal trace and are therefore incomparable
in Loewner order. Together with the $W$-path cover, this gives width
exactly $W$ almost surely.

\paragraph{Shared Gaussian responses.}
Since $V^\top g\sim\mathcal N(0,I_d)$ for an ambient standard Gaussian
probe, all required statistics can be generated in effective coordinates.
For each trial, draw $S=GG^\top/s$, where $G\in\mathbb R^{d\times s}$
has independent standard Gaussian entries, using the equivalent Bartlett
factorization $S=BB^\top/s$.
Here $B$ is lower triangular, $B_{aa}^2\sim\chi^2_{s-a+1}$ for
$a=1,\ldots,d$, and its strictly lower triangular entries are independent
standard Gaussians; all these entries are mutually independent.
Every tested budget satisfies $s\ge d$.
The same $S$ is used for all candidates and widths within a trial.

The empirical ratio retains its random denominator and satisfies
\[
    \widehat q_{i,k}=q_k+\eta_i,
    ~~
    \eta_i=\beta\left(
      \frac{\operatorname{tr}(Q_iS)}{\operatorname{tr}S}-\frac12
    \right).
\]
Acceptance uses $\widehat q\le\rho-g/2=0.495$.
Because each grid contains both decision frontiers, a simultaneous error
is exactly the event
\[
    \left\{\min_{i\le W}\eta_i\le-g/2\right\}
    \ \cup\
    \left\{\max_{i\le W}\eta_i>g/2\right\}.
\]
Almost surely, this equals $\{\max_{i\le W}|\eta_i|>g/2\}$.
The offset is constant along a path, so adding intermediate checks while
retaining the frontiers leaves the error event unchanged. This additive
offset structure is specific to the experimental family; the theoretical
boundary reduction requires only the Loewner ordering.

For all trials, widths, and check counts, the grid-specific frontier
computation agrees with the common frontier event, giving 258048
consistent comparisons from 12288 independent validation draws.
We additionally evaluate every decision on the complete grids for the
first eight trials and first sixteen paths of each family--budget
setting; these checks also agree. Repeated checks are not additional
independent trials.

\paragraph{Settings and results.}
The budgets are $s\in\{512,1024,2048,4096\}$.
Three independently generated candidate families are retained.
The primary family uses 2048 trials per budget, and the other two use
512 trials each. Table~\ref{tab:exp_reuse_counts} reports all error counts
for the primary family; the three check counts produce identical entries.
Table~\ref{tab:exp_reuse_families} compares the families at
$s=2048$, $W=4096$.
Figure~\ref{fig:exp_reuse_supp} additionally reports the empirical 95th
percentile of $\max_{i\le W}|\eta_i|$.
This percentile describes the sampled error distribution and is not a
confidence upper bound.

\begin{table}[t]
\centering
\caption{Primary-family simultaneous error counts out of 2048 trials
per budget, identical for all three check grids. The last column is
the empirical 95th percentile of maximum absolute error at $s=2048$.}
\label{tab:exp_reuse_counts}
\small
\setlength{\tabcolsep}{5pt}
\begin{tabular}{r rrrr r}
\toprule
$W$ & $s=512$ & $s=1024$ & $s=2048$ & $s=4096$ & Error percentile\\
\midrule
1    & 82   & 15   & 0   & 0 & 0.002509 \\
4    & 366  & 43   & 1   & 0 & 0.003137 \\
16   & 1123 & 187  & 4   & 0 & 0.003767 \\
64   & 1944 & 622  & 11  & 0 & 0.004267 \\
256  & 2048 & 1517 & 43  & 0 & 0.004746 \\
1024 & 2048 & 2025 & 154 & 0 & 0.005131 \\
4096 & 2048 & 2048 & 568 & 0 & 0.005550 \\
\bottomrule
\end{tabular}
\end{table}

\begin{table}[t]
\centering
\caption{Independent fixed families at $s=2048$, $W=4096$.
Error probabilities and pointwise Wilson 95\% intervals are percentages.}
\label{tab:exp_reuse_families}
\small
\setlength{\tabcolsep}{5pt}
\begin{tabular}{r rrrr}
\toprule
Family & Trials & Failures & Error (\%) & Interval (\%)\\
\midrule
1 & 2048 & 568 & 27.734 & [25.839, 29.714] \\
2 & 512  & 156 & 30.469 & [26.640, 34.589] \\
3 & 512  & 139 & 27.148 & [23.477, 31.160] \\
\bottomrule
\end{tabular}
\end{table}

\begin{figure}[t]
    \centering
    \includegraphics[width=0.8\linewidth]{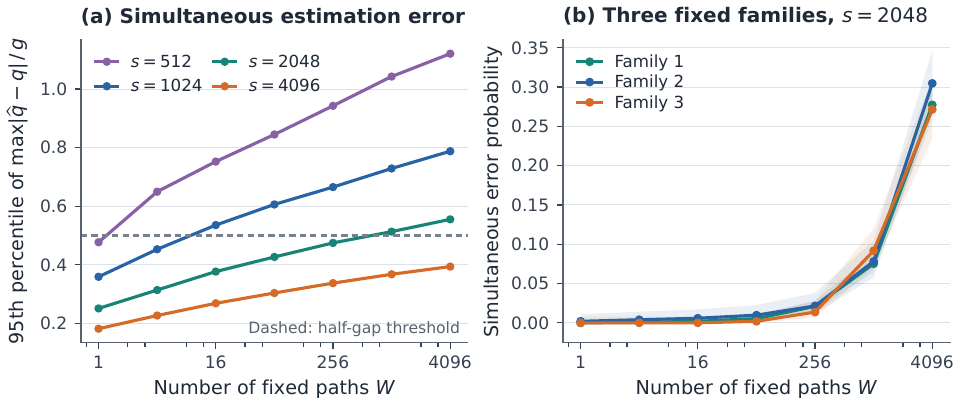}
    \caption{Additional validation-reuse results.
    Left: empirical 95th percentile of the maximum absolute ratio error,
    divided by $g$, for the primary family; the dashed line marks the
    half-gap decision margin. No interval for the percentile is shown.
    Right: simultaneous error probabilities for all three fixed families
    at $s=2048$, with pointwise Wilson 95\% intervals.}
    \label{fig:exp_reuse_supp}
\end{figure}

Intervals are pointwise Wilson 95\% intervals conditional on the fixed
candidate family. Families are reported separately rather than pooled.
In particular, zero failures among 2048 trials have Wilson upper endpoint
$0.001872$ and do not establish zero risk.
As a finite-dimensional diagnostic, the maximum error over all rank-$d/2$
projections is computed from the eigenvalues of the same $S$:
\[
    \sup_{\substack{Q=Q^\top=Q^2\\\operatorname{rank}(Q)=d/2}}
    \beta\left|\frac{\operatorname{tr}(QS)}{\operatorname{tr}S}-\frac12\right|
    =\beta\left(
      \frac{\sum_{a=1}^{d/2}\lambda_a^\downarrow(S)}{\operatorname{tr}S}
      -\frac12\right).
\]
At $W=4096$, the median ratio of the finite-family maximum to this
supremum is approximately $0.254$--$0.256$ across the tested conditions.
The observed families are thus well below this saturation limit.
Nevertheless, fixed effective dimension eventually imposes saturation;
these experiments do not estimate the dimension-uniform minimax width
exponent. The dimension-varying experiment in
Section~\ref{app:exp_width_mechanism} instead illustrates the
square-root logarithmic mechanism for Gaussian validation.
Neither ambient embedding is a hardness construction for arbitrary
adaptive queries.

\subsection{Spectral dependence and validation-budget selection}
\label{app:exp_stopping}

\paragraph{Matrices, paths, and cost.}
Set $\rho=0.1$, $\delta=0.05$, and use
\[
    A_{M,N}=\operatorname{diag}\left(
       \sqrt{\frac{1-\rho}{M}}I_M,
       \sqrt{\frac{\rho}{N}}I_N\right).
\]
The coordinate path captures the first $M$ directions and then removes
the $N$ tail directions individually. The first position with error at
most $\rho$ is $r_\star=M$, and
$q(P_{M+j})=\rho(1-j/N)$.
Both certificates use the same path, and each validation batch is reused
throughout it. If $\tau_s$ is the first acceptance position, the measured
delay is $D_s=(\tau_s-M)_+$ and the cost is
$\mathcal C_N(s)=s+\mathbb E D_s$.
Thus validation and each additional direction have unit cost, and the
initial $M$ construction steps are excluded.

All delay expectations are unconditional. By monotonicity, simulating
acceptance from rank $M$ onward gives the exact positive delay even
when the full-path rule would have accepted earlier.
A zero delay includes acceptance at the null boundary $q=\rho$ and
does not distinguish acceptance at $M$ from acceptance before $M$.
These simulations therefore do not estimate the probability of an
unsafe pre-target stop.

\paragraph{Certification rules.}
The energy-ratio rule accepts when $\widehat q\le\theta_s$, where
\[
    \theta_s=\ell^{-1}\left(
       \ell(\rho)-2\operatorname{arcosh}(\delta^{-1/s})\right),
    ~~
    \ell(t)=\log\frac{t}{1-t}.
\]
Block signs use $L=32$ predetermined blocks of size $b=s/L\ge2$.
The cutoff is $k=6$, since
$\mathbb P\{\operatorname{Bin}(32,e^{-1})\le6\}=0.022779$,
whereas the corresponding probability at 7 is $0.054918>\delta$.
Both rules are calibrated at the same nominal level $\delta=0.05$;
the smaller binomial bound reflects the discrete cutoff.

Write $C_\ell$ for captured energy in block $\ell$ and $U_{\ell i}$
for its tail-coordinate energies. With the shape--scale convention,
independent Gaussian probing gives independent variables
\[
    C_\ell\sim\operatorname{Gamma}\left(
         \frac{Mb}{2},\frac{2(1-\rho)}{Mb}\right),
    ~~
    U_{\ell i}\sim\operatorname{Gamma}\left(
         \frac b2,\frac{2\rho}{Nb}\right).
\]
Let $T_\ell=\sum_{i=1}^N U_{\ell i}$ and
$R_\ell(j)=T_\ell-\sum_{i=1}^jU_{\ell i}$.
At tail position $j$, the two decisions are
\[
    \frac{\sum_{\ell=1}^L R_\ell(j)}
         {\sum_{\ell=1}^L(C_\ell+T_\ell)}\le\theta_s,
    ~~
    \sum_{\ell=1}^L
       \mathbf1\{R_\ell(j)-\rho(C_\ell+T_\ell)\ge0\}\le6.
\]
The ratio uses pooled energies, rather than an average of block ratios.

\paragraph{Fixed-budget spectral comparison.}
For $N\in\{512,2048\}$, $M\in\{1,N\}$, and
$s\in\{128,256,512\}$, all coordinate energies are sampled explicitly.
Each of the 12 settings has 512 trials, with the two certificates sharing
the same energies within each trial.
Table~\ref{tab:exp_spectral} reports every mean delay and its Monte Carlo
standard error. The curves in Figure~\ref{fig:exp_stopping_supp}(a,b)
show the same data.
For $M=N$, both energy blocks are dispersed; for $M=1$, the captured
energy is concentrated. The reversal of the ordering at $M=1$ is
retained and illustrates that the advantage of block signs depends on
the spectrum.

\begin{table}[t]
\centering
\caption{Complete fixed-budget spectral results: mean positive delay
$\pm$ one Monte Carlo standard error from 512 paired trials per setting.
Adding $s$ gives the mean excess cost without changing its standard error.}
\label{tab:exp_spectral}
\small
\setlength{\tabcolsep}{5pt}
\begin{tabular}{rrr rr}
\toprule
$N$ & $M$ & $s$ & Energy ratio & Block signs\\
\midrule
512 & 1 & 128 & $167.21\pm1.72$ & $263.91\pm2.04$ \\
512 & 1 & 256 & $125.88\pm1.35$ & $190.37\pm1.85$ \\
512 & 1 & 512 & $91.19\pm1.04$ & $135.18\pm1.41$ \\
\addlinespace[2pt]
512 & 512 & 128 & $168.72\pm0.13$ & $17.11\pm0.22$ \\
512 & 512 & 256 & $125.54\pm0.09$ & $12.50\pm0.16$ \\
512 & 512 & 512 & $91.97\pm0.07$ & $8.91\pm0.11$ \\
\addlinespace[2pt]
2048 & 1 & 128 & $674.96\pm7.21$ & $1066.37\pm7.90$ \\
2048 & 1 & 256 & $494.98\pm5.14$ & $768.80\pm6.84$ \\
2048 & 1 & 512 & $373.09\pm4.27$ & $557.42\pm5.69$ \\
\addlinespace[2pt]
2048 & 2048 & 128 & $673.62\pm0.24$ & $33.61\pm0.45$ \\
2048 & 2048 & 256 & $500.01\pm0.19$ & $23.79\pm0.31$ \\
2048 & 2048 & 512 & $366.63\pm0.15$ & $16.98\pm0.22$ \\
\bottomrule
\end{tabular}
\end{table}

\paragraph{Exact large-scale delay sampling.}
The allocation experiment fixes $M=N$ and uses
$N=2^j$ for $j\in\{12,15,18,21,24,27,30\}$.
Conditional on their total, independent equal-scale Gamma tail energies
have Dirichlet proportions. For one sign block, draw
\[
    G_c,G_t\ \stackrel{\mathrm{iid}}{\sim}
      \operatorname{Gamma}\left(
         \frac{Nb}{2},\frac{2}{Nb}\right),
    ~~
    C=(1-\rho)G_c,~ T=\rho G_t.
\]
The normalized tail vector is independent of these totals and has law
$\operatorname{Dirichlet}(b/2,\ldots,b/2)$.
If $F_j$ is its cumulative mass through coordinate $j$, the block
becomes negative when
$F_j>h_{\mathrm{sign}}$, where
$h_{\mathrm{sign}}=(1-\rho)(1-G_c/G_t)$.
Negative thresholds give zero crossings.
Across $L$ independent blocks, the certificate delay is the
$(L-k)$th smallest crossing time, here the 26th.

For the ratio rule, pool all $s$ probes before sampling:
$G_c,G_t$ have shape $Ns/2$ and scale $2/(Ns)$, and the Dirichlet
coordinate parameter is $s/2$.
Acceptance occurs when $F_j\ge h_{\mathrm{ratio}}$, with
\[
    h_{\mathrm{ratio}}
    =1-\theta_s-\frac{\theta_s(1-\rho)G_c}{\rho G_t}
    =1-\frac{\theta_s}{\rho}
      +\frac{\theta_s(1-\rho)}{\rho}\frac{G_t-G_c}{G_t}.
\]
The implementation uses the latter, more stable expression.
Strict versus non-strict crossings agree almost surely for these
continuous distributions.

For a segment containing $n_0$ coordinates with Dirichlet parameter
$\alpha$, its left-half mass, conditional on the segment total, is
$Z\sim\operatorname{Beta}(n_0\alpha/2,n_0\alpha/2)$.
If the threshold lies in the left half, update it to $h/Z$; otherwise
subtract the left half, add $n_0/2$ to the coordinate offset, and update
the threshold to $(h-Z)/(1-Z)$.
Repeating this conditional split until a single coordinate remains
samples the crossing exactly in $O(\log N)$ steps.
Consequently, the large-$N$ observations are generated from the finite
Gaussian model, without using the asymptotic delay laws as a simulator.
Sign and ratio trials in this experiment use independent streams;
they are not paired realizations of one cached response matrix.

\paragraph{Budget screening and independent evaluation.}
For each $N$, both rules use a common geometric grid of budgets $s=32b$.
Its block sizes are the distinct values
$\operatorname{round}(2^{j/2})$ for integers $j\ge2$ satisfying
$2^{j/2}\le(N+64)/32$, together with the endpoint $(N+64)/32$.
Thus the smallest budget is 64 and the largest is $N+64$.
There are respectively $14,20,26,32,38,44,50$ budgets at the seven
dimensions, for 224 grid points in total.
The upper endpoint follows from $D_s\le N$:
any $s>N+64$ costs more than the feasible choice $s=64$.
The grid does not use the theoretical optimal validation budget.
The ratio search is also restricted to this grid; budgets below 64
and intermediate integers are not searched.

Each rule receives 1024 screening trials at every grid point.
The budget minimizing its screening mean cost is selected, with ties
resolved toward the smaller budget. At that budget, 8192 fresh trials
estimate the cost reported in Table~\ref{tab:exp_allocation}.
No further selection uses the evaluation outcomes.
These are independently evaluated grid-selected costs, not exact
finite-$N$ global minima. Their standard errors are conditional on the
selected budget and do not include screening variability.

\begin{table}[t]
\centering
\caption{Independent evaluation at screening-selected budgets.
Costs are $s+\mathbb E D_s$; entries show means $\pm$ one Monte Carlo
standard error from 8192 fresh trials.}
\label{tab:exp_allocation}
\small
\setlength{\tabcolsep}{4pt}
\begin{tabular}{r rr rr}
\toprule
& \multicolumn{2}{c}{Block signs}
& \multicolumn{2}{c}{Energy ratio}\\
\cmidrule(lr){2-3}\cmidrule(lr){4-5}
$N$ & Selected $s$ & Cost & Selected $s$ & Cost\\
\midrule
$2^{12}$ & 64 & $131.40\pm0.22$
         & 352 & $1220.21\pm0.06$ \\
$2^{15}$ & 96 & $250.26\pm0.51$
         & 1440 & $5054.76\pm0.09$ \\
$2^{18}$ & 192 & $504.70\pm1.05$
         & 5792 & $20584.19\pm0.13$ \\
$2^{21}$ & 352 & $998.34\pm2.18$
         & 23168 & $83098.69\pm0.19$ \\
$2^{24}$ & 736 & $1996.60\pm4.22$
         & 92672 & $333937.20\pm0.27$ \\
$2^{27}$ & 1440 & $3997.61\pm8.51$
         & 524288 & $1338785.98\pm0.32$ \\
$2^{30}$ & 2912 & $7999.00\pm16.96$
         & 2097152 & $5359548.83\pm0.45$ \\
\bottomrule
\end{tabular}
\end{table}

\paragraph{Comparison with the asymptotic laws.}
For the present parameters, the delay constants are
$C_{\mathrm{sign}}=8.386229$ and $C_{\mathrm{ratio}}=4.405944$.
The corresponding continuous asymptotic minimum costs are
\[
    3(C_{\mathrm{sign}}/2)^{2/3}N^{1/3},
    ~~
    3(C_{\mathrm{ratio}}/2)^{2/3}N^{2/3}.
\]
These expressions provide the reference curves and normalizations;
they are not fitted to the observed costs.
The cost slopes are obtained by ordinary least squares of log mean
evaluation cost against log $N$ over the largest four dimensions.
They are $0.333608$ for signs and $0.667890$ for ratios.
Using all seven dimensions gives $0.330509$ and $0.671671$.
Other fitting windows are retained in the result files.

The discrete budget grid produces visible finite-scale effects.
At $N=4096$, the selected sign budget is the lower endpoint 64.
Over the largest four dimensions, the selected ratio \emph{budget}
has slope $0.733354$, even though its evaluated \emph{cost} has slope
$0.667890$. Thus the results support the cost separation but do not
provide an equally precise empirical estimate of the budget exponent.
These descriptive regressions are not confidence intervals or
information-theoretic lower bounds.

\begin{figure}[t]
    \centering
    \includegraphics[width=0.8\linewidth]{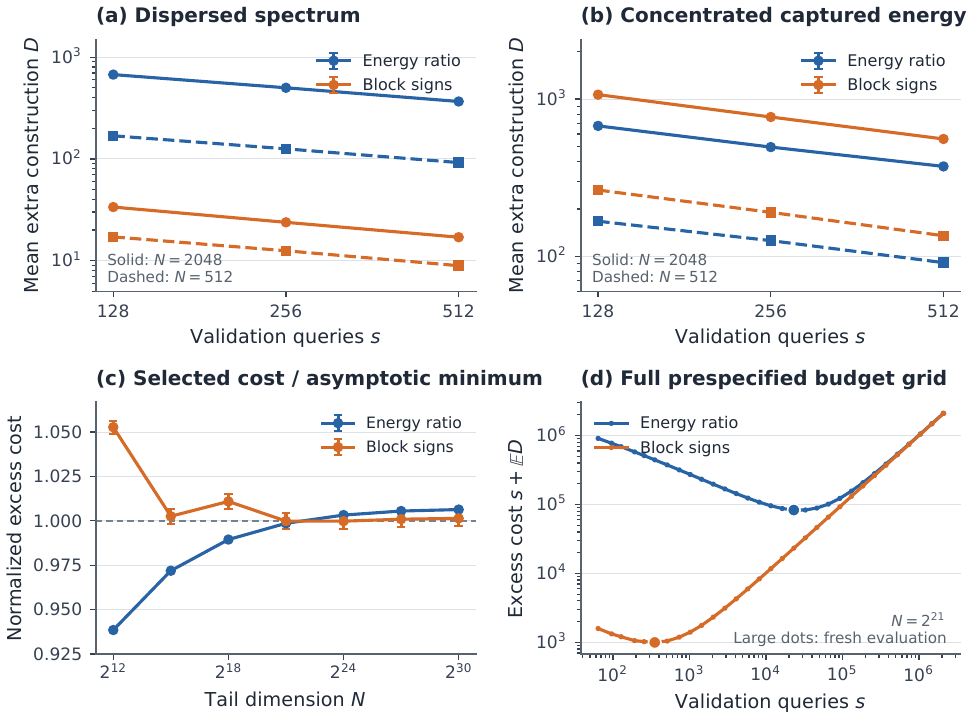}
    \caption{Additional stopping-cost results.
    (a,b) All fixed-budget spectral settings, with 512 paired trials per
    setting. Solid curves use $N=2048$ and dashed curves use $N=512$.
    (c) Independent evaluation costs divided by the continuous asymptotic
    minimum within each rule family.
    (d) The complete screening grid at $N=2^{21}$; large markers show
    fresh evaluation at the selected budgets.
    Error bars in (a--c) are $\pm2$ Monte Carlo standard errors.
    All costs exclude construction before the true target.}
    \label{fig:exp_stopping_supp}
\end{figure}

\subsection{Sampling checks and reproducibility}
\label{app:exp_reproducibility}

\paragraph{Implementation checks.}
For the Dirichlet crossing sampler, the identity
$\mathbb P\{D\le j\}=1-I_h(j\alpha,(N-j)\alpha)$, $1\le j<N$, is checked at
$N=64$, $\alpha=2$, and $h=0.37$, using 100000 samples.
The maximum CDF discrepancy is $0.002504$, below the prespecified
99.9\% Dvoretzky--Kiefer--Wolfowitz tolerance $0.006165$.
At $(N,b)=(32,2)$ and $(128,8)$, each rule is additionally compared
with direct coordinate-energy simulation using 8192 independent trials
per sampler. The largest absolute mean difference is $2.147$ combined
standard errors and the largest empirical CDF discrepancy is $0.018433$.
Across 113319012 Beta draws, no endpoint, nonfinite value, invalid
normalized threshold, clipping, or resampling was recorded.
Threshold updates use extended precision; invalid values cause the
sampler to stop rather than silently modify a draw.

For reuse, an explicit $n=64$, $d=4$ operator verifies projection
idempotence, nesting, and the energy and response identities, with
maximum algebraic error below $7.3\times10^{-16}$.
At $d=4$, $s=16$, Bartlett and direct Gaussian sampling are compared
using 20000 trials each. The four checked statistics are $S_{11}$,
$S_{12}$, $\operatorname{tr}S$, and the rank-half energy fraction.
One marginal two-sample Kolmogorov--Smirnov test has $p=0.00826$;
all original diagnostics are retained.
An additional implementation check, specified after the unchanged
main run, compares the maximum-error statistic at
$d=24$, $s=1024$, $W=64$, using 2048 trials per sampler.
It gives KS statistic $0.02246$ and $p=0.67997$.
The corresponding error-rate difference is approximately one combined
standard error. This supplementary check does not replace or filter
any main-experiment outcomes.

For a single fixed rank-half projection,
$\operatorname{tr}(Q_iS)/\operatorname{tr}S
\sim\operatorname{Beta}(ds/4,ds/4)$ supplies an exact probability
benchmark. Not every pointwise 95\% interval covers this benchmark;
the observed deviations are retained without changing seeds.
All saved means, standard errors, error counts, and selected budgets
for these experiments were independently reconciled with the raw arrays.

For the new width experiment, the exact upper-envelope mean provides
an additional distributional check: across the nine fixed-budget
settings, the largest absolute discrepancy between the empirical and
exact mean is $2.286$ Monte Carlo standard errors. The exhaustive
maximum and projection-dilation checks are described in
Section~\ref{app:exp_width_mechanism}.

\paragraph{Random streams and software.}
The fixed-budget spectral experiment uses NumPy seed 20260920,
iterating over $N$, then $M$, then $s$ in the orders listed above.
Reuse family seeds are 240901, 240902, and 240903.
Validation uses \texttt{SeedSequence([740901,f,s])}, with zero-based
family index $f$; candidate and validation streams are separate.
The explicit projection check uses seed 991002, and the two Gaussian
sampler checks use \texttt{SeedSequence([991001,j])}, with
$j=0,1$ for the original check and $j=2,3$ for the supplementary check.

Allocation streams use
\texttt{SeedSequence([202609214731,phase,N,s,method])},
where \texttt{method} is 1 for signs and 2 for ratios, and
\texttt{phase} is 10 for screening and 20 for evaluation.
Separate phases 1--3 are reserved for sampler checks.

Random streams for the width experiment use
\texttt{SeedSequence([202609220731,\allowbreak phase,\allowbreak d,\allowbreak s])}, with phases
10 and 20 for coarse screening and evaluation, 11 and 21 for refined
screening and evaluation, and 30 for fixed-budget sampling.
The algebraic checks use seed 202609220732.
The archive contains every sampled maximum bound and attainment
diagnostic, the unfiltered coarse and refined results, and plotting code.

The code uses Python 3.12.14, NumPy 2.3.5, SciPy 1.17.0,
Matplotlib 3.10.8, and threadpoolctl 3.6.0.
The reuse and width simulations use batches of 128 and four BLAS
threads.
\end{document}